\newcommand{\bolda}{\mathbf{a}}
\newcommand{\boldb}{\mathbf{b}}
\newcommand{\boldc}{\mathbf{c}}
\newcommand{\bolde}{\mathbf{e}}
\newcommand{\boldx}{\mathbf{x}}
\newcommand{\boldalpha}{\boldsymbol{\alpha}}
\newcommand{\boldbeta}{\boldsymbol{\beta}}
\newcommand{\boldgamma}{\boldsymbol{\gamma}}
\newcommand{\boldzero}{\mathbf{0}}
\newcommand{\boldone}{\mathbf{1}}
\newcommand{\inner}[2]{\langle \,#1\,,\, #2\, \rangle}
\newcommand{\biginner}[2]{\left\langle \,#1\,,\, #2\, \right\rangle}
\newcommand{\R}{\mathbb{R}}
\newcommand{\Z}{\mathbb{Z}}
\newcommand{\subdiv}{\mathcal{D}}
\newcommand{\adjp}{\nabla}
\newcommand{\facets}{\mathcal{F}}
\newcommand{\nodes}{\mathcal{V}}
\newcommand{\edges}{\mathcal{E}}
\newcommand{\term}[1]{\textbf{#1}}
\newcommand{\digraph}[1]{\vec{#1}}
\DeclareMathOperator{\conv}{conv}
\DeclareMathOperator{\evol}{vol}
\DeclareMathOperator{\nvol}{nvol}
\DeclareMathOperator{\rank}{rank}
\DeclareMathOperator{\corank}{corank}
\DeclarePairedDelimiter{\ceil}{\lceil}{\rceil}
\newtheorem{theorem}{Theorem}
\newtheorem{proposition}{Proposition}
\newtheorem{lemma}{Lemma}
\newtheorem{corollary}{Corollary}
\newtheorem{remark}{Remark}
\theoremstyle{definition}
\newtheorem{definition}{Definition}
\title{Graph edge contraction and subdivisions for adjacency polytopes}
\author{Tianran Chen} 
\author{Evgeniia Korchevskaia}
\email{ti@nranchen.org}
\email{ekorchev@aum.edu}
\address{Department of Mathematics, Auburn University Montgomery, Montgomery, AL, USA}
\thanks{The authors are supported by 
a grant from the Auburn University at Montgomery Research Grant-in-Aid Program
and the National Science Foundation under Grant No. 1923099.
EK is also supported by the Undergraduate Research Experience program funded by the Department of Mathematics at Auburn University at Montgomery}
\begin{document}

\begin{abstract}
    Adjacency polytopes, a.k.a. symmetric edge polytopes, associated with undirected graphs
    have been defined and studied in several seemingly independent areas
    including number theory, discrete geometry, and dynamical systems. 
    In particular, the authors are motivated by the 
    tropical intersections problem derived from the Kuramoto equations.
    Regular subdivisions of adjacency polytopes are instrumental in solving these problems.
    This paper explores connections between the regular subdivisions of an
    adjacency polytope and the contraction of the underlying graph along an edge.
    We construct a special regular subdivision
    whose cells are in one-to-one correspondence with facets of an adjacency polytope
    associated with an edge-contraction of the original graph.
    Moreover, this subdivision induces a decomposition of the original graph
    into ``cell subgraphs''.
    We explore the combinatorial, graph-theoretic, and matroidal aspects of this connection.
\end{abstract}

\maketitle

\section{Introduction}

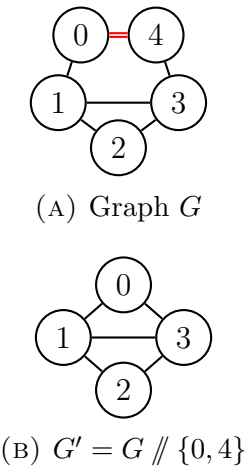
\begin{wrapfigure}[18]{r}{0.26\textwidth}
    \centering
    \begin{subfigure}[b]{0.25\textwidth}
        \centering
        \begin{tikzpicture}[
            every node/.style={circle,thick,draw},
            every edge/.style={draw,thick}]
            \node (0) at ( 0  , .9) {0};
            \node (4) at ( 1  , .9) {4};
            \node (1) at (-0.3,  0) {1};
            \node (3) at ( 1.3,  0) {3};
            \node (2) at ( 0.5,-.6) {2};
            \path (0) edge (1);
            \path (0) edge[color=red,style={double}] (4);
            \path (1) edge (2);
            \path (2) edge (3);
            \path (3) edge (4);
            \path (1) edge (3);
        \end{tikzpicture}
        \caption{Graph $G$}
        \label{fig:one-G}
    \end{subfigure}
    \vspace{2ex}
    
    \begin{subfigure}[b]{0.25\textwidth}
        \centering
        \begin{tikzpicture}[
            every node/.style={circle,thick,draw},
            every edge/.style={draw,thick}]
            \node (0) at ( 0.5  , .7) {0};
            \node (1) at (-0.3,  0) {1};
            \node (3) at ( 1.3,  0) {3};
            \node (2) at ( 0.5,-.7) {2};
            \path (0) edge (1);
            \path (1) edge (2);
            \path (2) edge (3);
            \path (3) edge (0);
            \path (1) edge (3);
        \end{tikzpicture}
        \caption{$G' = G \sslash \{0,4\}$}
        \label{fig:one-G-shrink}
    \end{subfigure}
    \caption{Edge contraction of a graph}
    \label{fig:contraction}
\end{wrapfigure}
For a connected graph $G$ with nodes $\nodes(G) = \{0,1,\dots,n\}$
and edge set $\edges(G)$,
its \emph{adjacency polytope}~\cite{Chen2019Unmixing} 
(a.k.a. \emph{symmetric edge polytope} \cite{Matsui2011Roots}) 
is the convex polytope
    $\adjp_G =
    \conv \{ \bolde_i - \bolde_j \mid \{i,j\} \in \edges(G) \}$.
In the context of Kuramoto models~\cite{Kuramoto1975Self},
the geometric structure of adjacency polytopes turned out to be
instrumental in understanding the root counting problem of algebraic Kuramoto equations \cite{ChenDavis2018Toric, ChenDavisMehta2018Counting,Kuramoto1975Self}.
In the broader context, the adjacency polytope of a graph 
is equivalent to the symmetric edge polytope which has been studied 
by number theorists, combinatorialists, and discrete geometers
motivated by several seemingly independent problems 
\cite{DelucchiHoessly2016Fundamental, higashitani2019ARITHMETIC, Higashitani2016Interlacing,  Matsui2011Roots, ohsugi2014, Ohsugi2012, Rodriguez2002}.
These different viewpoints are consolidated in the recent work
by D'Alì, Delucchi, and Michałek~\cite{dal2019faces},
which, among other contributions, shed new light on the
structure of adjacency polytopes associated with graphs
consisting of two subgraphs sharing a single edge.
In particular, using Gr\"obner bases methods, the authors provided explicit formulae for 
the number of facets and the normalized volume 
of adjacency polytopes associated with graphs formed by gluing together two connected bipartite graphs, trees, or cycles. 
In this paper, we pursue this line of inquiry
by considering the effect of a contraction of a graph along an edge
on the corresponding adjacency polytope.
Of particular importance in the study of algebraic Kuramoto equations
derived from a graph $G$ are the facets and regular subdivisions of 
the corresponding adjacency polytope $\adjp_G$.
The set of facets of $\adjp_G$ corresponds to 
the set of directed acyclic subgraphs of $G$ 
which satisfy certain minimal flow property~\cite{Chen2019Directed}. 
Geometric structure of the facets played a key role in computing
the volume of the adjacency polytope for certain families of graphs~\cite{ChenDavisMehta2018Counting,dal2019faces}
as well as determining the generic root count for algebraic Kuramoto equations~\cite{Baillieul1982,Chen2019Unmixing}.
Equally important, a nontrivial regular subdivision of $\adjp_G$ gave rise to a 
toric deformation of the underlying algebraic Kuramoto equations 
into a simpler system of equations whose solutions can be identified 
with all the complex solutions to the original system~\cite{ChenDavis2018Toric}.
In this paper, we explore connections between the regular subdivision of $\adjp_G$
and the facets of $\adjp_{G \sslash e}$ where 
$G \sslash e$ is the contraction of $G$ along an edge $e \in \edges(G)$. 
See an example in \Cref{fig:contraction}.

The main contribution of this paper is twofold.
First, we construct a special regular subdivision
of $\adjp_G$ whose cells are in one-to-one correspondence with
facets of $\adjp_{G \sslash e}$ associated with the graph $G \sslash e$.
We also show that if $G$ consists of two subgraphs $G_1$ and $G_2$
sharing exactly one edge $e$, 
then the cells in the special regular subdivision of $\adjp_G$ are in 
one-to-one correspondence with the products of facets of the 
adjacency polytopes $\adjp_{G_1'}$ and $\adjp_{G_2'}$, 
where $G_1' = G_1 \sslash e$ and $G_2' = G_2 \sslash e$.
Since this study is largely motivated by the tropical intersections problem
derived from the algebraic Kuramoto equations,
the resulting subdivision of the polytope $\adjp_G$
can be also conveniently viewed as a subdivision of the underlying point configuration.
Combined with the existing knowledge about facets of adjacency polytopes
(symmetric edge polytopes) associated with 
trees, cycles, bipartite graphs, and wheel graphs~\cite{dal2019faces},
this result enables us to study more complicated graphs 
formed by gluing these basic building blocks along the edges.
Second, we show that the resulting subdivision corresponds to 
a decomposition of the original graph into a collection of ``cell subgraphs'',
and we explore the combinatorial, graph-theoretic, and matroidal aspects 
of this correspondence.

This paper is structured as follows.
\Cref{sec:prelim} reviews necessary definitions and notations.
\Cref{sec:subdiv} defines the special regular subdivision
induced by an edge contraction and establishes the correspondence between
cells in this subdivision and facets of smaller adjacency polytopes.
\Cref{sec:properties} explores the correspondence between cells in this subdivision
and their corresponding ``cell subgraphs''.
\Cref{sec:matroid} provides a matroidal interpretation of the symmetry
between cells and the cell subgraphs.
In \Cref{sec:examples}, we show a few concrete examples.
We conclude in \Cref{sec:conclusion}.

\section{Preliminaries and notations}\label{sec:prelim}

Given a (undirected) simple graph $G$,
$\nodes(G)$ and $\edges(G)$ denote the sets of vertices and edges respectively. 
We use the notation $\{a,b\}$
for the (undirected) edge connecting vertices $a$ and $b$. 
With respect to an edge $e = \{a,b\}$,
the \emph{contraction} $G \sslash e$ of $G$ along $e$ is 
a simple graph obtained by merging the vertices $a$ and $b$ in $G$,
i.e.,
$G \sslash e = (V',E')$ with
$V' = \nodes(G) \setminus \{a,b\} \cup \{a'\}$ and
$E' = \edges(G) \setminus \{\{a,b\}\} \cup \{ \{ a', v \} \mid v \ne a,b, \{a,v\} \text{ or } \{b,v\} \text{ is in } \edges(G) \}$.
We also extend this notation to subgraphs of $G$.
For a subgraph $H$ of $G$ and an edge $e \in \edges(G)$, 
we let $H \sslash e$ be the contraction as defined above
if $e \in \edges(H)$,
and just let $H \sslash e = H$ otherwise.

A \emph{convex polytope} is the convex null of a finite set of points.
Its \emph{dimension} is the dimension of the smallest affine space that contains it.
A (nonempty) \emph{face} of a convex polytope is a subset of the polytope on which 
a linear functional $\inner{\cdot}{\boldalpha}$ is minimized.
In this case, $\boldalpha$ is an inner normal vector of the face.
Faces are themselves polytopes, 
and proper faces of the maximal dimension are called \emph{facets}.
In this paper, we only deal with (convex) \emph{lattice polytopes}, i.e., the convex polytopes whose vertices have integer coordinates.
For an $n$-dimensional lattice polytope $P \subset \R^n$, 
its \emph{normalized volume}, denoted by $\nvol(P)$, is $n! \evol(P)$,
which is always an integer.

For a connected graph $G$ with nodes $\nodes(G) = \{0,1,\dots,n\}$
and 
edge set $\edges(G)$,
its \emph{adjacency polytope}~\cite{Chen2019Unmixing} 
(a.k.a. \emph{symmetric edge polytope}~\cite{dal2019faces,Matsui2011Roots}) 
is the convex polytope
\begin{equation}
    \adjp_G =
    \conv \{ \bolde_i - \bolde_j \mid \{i,j\} \in \edges(G) \}
    \subset \R^n,
\end{equation}
where $\bolde_i \in \R^n$ is the vector with 1 in the $i$-th entry and zero elsewhere, 
and $\bolde_0 = \boldzero$. 

In the trivial case of $n=0$, $\adjp_G$ is simply $\{ \boldzero \}$,
and we adopt the convention that the only facet of $\adjp_G$ is $\varnothing$. 
For $n > 0$, $\adjp_G$ is a full-dimensional polytope in $\R^n$.

The set of facets of $\adjp_G$ is denoted by $\facets(\adjp_G)$. 
Any facet of $\adjp_G$ is an intersection of this polytope with a supporting hyperplane, 
which is uniquely determined by an inner normal vector \cite{Ziegler1995Lectures}. 
Moreover, by construction, $\boldzero$ is an interior point of $\adjp_G$,
which allows the inner normal vectors to be normalized to a certain form.
We state this observation as a lemma for later reference.

\begin{lemma}\label{lem:facet-def}
    For any graph $G$, 
    a nonzero vector $\boldalpha$ defines a facet of $\adjp_G$
    if and only if there are $x_1,\dots,x_n \in \adjp_G$ such that
    $x_1,\dots,x_n$ are linearly independent as vectors and
    \begin{align*}
        \inner{ \boldx_i }{ \boldalpha } &=   -1 \quad\text{for any } i=1,\dots,n, \text{and}\\
        \inner{ \boldx }{ \boldalpha }   &\ge -1 \quad\text{for any } \boldx \in \adjp_G. \\
    \end{align*}
\end{lemma}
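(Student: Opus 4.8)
The plan is to recognize this as the standard supporting-hyperplane characterization of facets, where the normalization to the value $-1$ is available precisely because $\boldzero$ is an interior point of $\adjp_G$. I would first dispose of the degenerate case $n=0$, in which the statement is vacuous, and then assume $n>0$ so that $\adjp_G$ is full-dimensional. For a nonzero $\boldalpha$ I would work with the face $F_{\boldalpha} := \{\,\boldx \in \adjp_G \mid \inner{\boldx}{\boldalpha} = \min_{\boldy \in \adjp_G}\inner{\boldy}{\boldalpha}\,\}$ it determines, using the elementary fact that $F_{\boldalpha}$ is a facet exactly when $\dim F_{\boldalpha} = n-1$, and proving the two implications separately.

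For the forward direction, assuming $\boldalpha$ defines a facet $F$, I would use that $\boldzero$ being interior forces $\inner{\boldzero}{\boldalpha}=0$ to exceed the minimum value $c := \min_{\boldx \in \adjp_G}\inner{\boldx}{\boldalpha}$, so $c<0$; rescaling $\boldalpha$ by the positive factor $1/(-c)$ leaves the face unchanged and normalizes the minimum to $-1$, yielding $\inner{\boldx}{\boldalpha}\ge -1$ on $\adjp_G$ with equality exactly on $F$. Since $F$ is a facet of a full-dimensional polytope, $\dim F = n-1$, so $F$ contains $n$ affinely independent points $\boldx_1,\dots,\boldx_n$, each satisfying $\inner{\boldx_i}{\boldalpha}=-1$; it then remains to promote affine independence to the linear independence the statement asks for.

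For the converse, given linearly independent $\boldx_1,\dots,\boldx_n \in \adjp_G$ satisfying the two displayed conditions, the inequality $\inner{\boldx}{\boldalpha}\ge -1$ together with $\inner{\boldx_i}{\boldalpha}=-1$ shows that $-1$ is the minimum of $\inner{\cdot}{\boldalpha}$ and is attained, so $F_{\boldalpha}$ is a genuine face containing every $\boldx_i$. This face is proper because $\inner{\boldzero}{\boldalpha}=0>-1$ puts the interior point $\boldzero$ outside it. As $n$ linearly independent vectors are in particular affinely independent, $\dim F_{\boldalpha}\ge n-1$, while properness gives $\dim F_{\boldalpha}\le n-1$; hence $F_{\boldalpha}$ is a facet with inner normal $\boldalpha$.

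The main obstacle, and the only genuinely nonformal step, is the passage between affine independence (which facets naturally supply) and linear independence (which the statement demands). I would isolate it as a short claim: for points lying on the hyperplane $\{\boldx \mid \inner{\boldx}{\boldalpha}=-1\}$, linear independence and affine independence coincide. The proof is to apply $\inner{\cdot}{\boldalpha}$ to any relation $\sum_i c_i \boldx_i = \boldzero$, which gives $0 = \sum_i c_i\inner{\boldx_i}{\boldalpha} = -\sum_i c_i$ and hence $\sum_i c_i = 0$, so that every linear dependence is automatically an affine dependence and conversely. This equivalence holds exactly because the facet hyperplane avoids the origin, which is the geometric content of $\boldzero$ being interior; the same claim powers both the promotion in the forward direction and the dimension bound in the converse.
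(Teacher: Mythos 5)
Your proposal is correct and is essentially the argument the paper has in mind: the paper states this lemma without proof, as an observation following from the standard supporting-hyperplane description of facets (citing Ziegler) together with the fact that $\boldzero$ is an interior point of $\adjp_G$, which permits the normalization to $-1$. Your treatment of the rescaling in the forward direction and your isolation of the key claim --- that affine and linear independence coincide for points on a hyperplane avoiding the origin --- supply exactly the details the paper leaves implicit.
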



A (polyhedral) \emph{subdivision} of a convex polytope $P$
is a collection $\subdiv$ of convex polytopes contained in $P$ and of the same dimension as $P$ 
such that their union is $P$ and the intersection of any two
is their (possibly empty) common face.
Elements of a subdivisions are known as cells.
A \emph{point configuration} is a finite collection of labeled points $S \subset \R^n$ \cite{loera_triangulations:_2010}. A subdivision of $S$ is simply a subdivision of $\conv(S)$
whose cells are convex hulls of the subsets of $S$.
For such a cell $C$, we use the notations
$\dim(C) := \dim(\conv(C))$,
$\evol(C) := \evol(\conv(C))$ and
$\nvol(C) := \nvol(\conv(C))$.

Regular subdivision is a particularly important class of subdivisions.
For a point configuration $S$,
using weights assigned by a function $\omega: S \to \R$, we define
$\hat{S} = \{ (\boldx, \omega(\boldx) ) \mid \boldx \in S \}$.
An inner normal vector $\hat{\boldalpha} \in \R^{n+1}$ 
of a face of $\conv(\hat{S})$ is said to be \emph{upward pointing} 
if $\inner{ \bolde_{n+1} }{ \hat{\boldalpha} } > 0$.
A facet of $\conv(\hat{S})$ with an upward pointing inner normal vector
is called a \emph{lower facet}.
The projection of all lower facets of $\conv({\hat{S}})$ form a subdivision of $S$,
the \emph{regular subdivision} (a.k.a. coherent subdivision) 
of $S$ induced by weight function $\omega$ ~\cite{gelfand_discriminants_1994, loera_triangulations:_2010}.
In this case, a lower facet is defined by a vector $\boldalpha \in \R^n$,
a value $h \in \R$ and 
a set $C \subset S$ with $|C| \ge n$, $\dim(\conv(C)) = n$ such that
\begin{equation}\label{equ:lower-facet}
    \begin{aligned}
        \inner{ \boldx }{ \boldalpha } + \omega(\boldx) &= h &&\text{for all } \boldx \in C, \\
        \inner{ \boldx }{ \boldalpha } + \omega(\boldx) &> h &&\text{for all } \boldx \in S \setminus C.
    \end{aligned}
\end{equation}
The construction of a special regular subdivision of an adjacency polytope
induced by an edge contraction of the underlying graph is the main focus of this paper.

\section{Regular subdivision induced by edge contraction}\label{sec:subdiv}

As noted in the definition of adjacency polytopes,
we identify edges of a graph of $n+1$ nodes with points in $\R^n$ via the map
\begin{equation}\label{equ:phi}
    \phi((i,j)) =
    \bolde_i - \bolde_j
\end{equation}
and consider an undirected edge $\{i,j\} \in \edges(G)$
as a pair of directed edges $(i,j)$ and $(j,i)$.
With this, the adjacency polytope of $G$ is simply
$\conv(\phi(\edges(G)))$.
In this section, we construct a regular subdivision of $\adjp_G$
induced by an edge contraction.

\begin{definition} \label{def:edge-subdiv} [Edge contraction subdivision]
    For an edge $\{k_1,k_2\} \in \edges(G)$ to be contracted,
    we define the lifting function 
    $\omega_{k_1,k_2} : \phi(\edges(G)) \to \Z$ given by
    \begin{equation}\label{equ:weights}
        \omega_{k_1,k_2}(\bolde_i - \bolde_j) = 
        \begin{cases}
            0 & \text{if } \{ i,j \} = \{ k_1, k_2 \}, \\
            1 & \text{otherwise},
        \end{cases}
    \end{equation}
    and the resulting lifted polytope
    \begin{equation}
        \hat{\adjp}_G =
        \conv \{ 
            (\bolde_i - \bolde_j, \omega_{k_1,k_2}(\bolde_i - \bolde_j)) 
            \;\mid\; 
            \{i,j\} \in \edges(G) 
        \}
        \subset \R^{n+1}.
    \end{equation}
    The projections of the facets of the lower hull of $\hat{\adjp}_{G}$
    onto $\R^n \times \{0\}$ form a subdivision of $\adjp_G$, 
    the regular subdivision induced by $\omega_{k_1,k_2}$.
    This subdivision, denoted by $\subdiv_{k_1,k_2}$ will be referred to as the
    \emph{edge contraction subdivision} of $\adjp_G$
    induced by the edge contraction of $G$ along $\{k_1,k_2\}$.
\end{definition}

\begin{remark}\label{rmk:refnode}
    In the following discussion,
    we will make frequent use of an observation
    that can simplify our notation and calculation.
    Since the choice of reference node is arbitrary,
    without loss of generality and after re-indexing the nodes,
    we can assume $\{0,k\}$ is the shared edge of $G_1$ and $G_2$
    for some $k \ne 0$.
    This corresponds to a projection of the symmetric edge polytope
    onto one of the coordinate planes.
\end{remark}

\begin{lemma}\label{lem:special-edge}
    For a connected graph $G$ and one of its edges, $\{ k_1, k_2 \}$,
    every cell in the edge contraction subdivision $\subdiv_{k_1,k_2}$
    must contain both 
    $\bolde_{k_1} - \bolde_{k_2}$ and
    $\bolde_{k_2} - \bolde_{k_1}$.
\end{lemma}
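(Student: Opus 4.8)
The plan is to reduce the statement to a single numerical claim about the lifted polytope and then eliminate the only bad case using the integrality of facet normals of adjacency polytopes. Write $\boldv := \bolde_{k_1}-\bolde_{k_2}$, so the two points in question are $\boldv$ and $-\boldv$; both receive lift $0$ while every other point of $\phi(\edges(G))$ receives lift $1$. A cell of $\subdiv_{k_1,k_2}$ is the projection of a lower facet, determined by some $(\boldalpha,1)\in\R^{n+1}$ (normalizing the last coordinate to $1$) and the value $h=\min_{\boldx}[\inner{\boldx}{\boldalpha}+\omega_{k_1,k_2}(\boldx)]$, with the cell $C$ the set of minimizers. Since $\boldv$ and $-\boldv$ are antipodal and both carry lift $0$, the quantities $\inner{\boldv}{\boldalpha}$ and $\inner{-\boldv}{\boldalpha}$ sum to $0$, so $2h\le\inner{\boldv}{\boldalpha}+\inner{-\boldv}{\boldalpha}=0$ and hence $h\le 0$. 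The whole statement is equivalent to $h=0$: once $h=0$, the inequalities $\inner{\pm\boldv}{\boldalpha}\ge 0$ force $\inner{\boldv}{\boldalpha}=0$, so $\boldv,-\boldv\in C$.

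The crux is therefore to exclude $h<0$, which I expect to be the main obstacle. First I would argue, purely from full-dimensionality of $C$, that in this case $C$ contains exactly one of $\pm\boldv$: it cannot contain both (that would force $\inner{\boldv}{\boldalpha}=-\inner{\boldv}{\boldalpha}=h$, i.e.\ $h=0$), and it cannot omit both (then every point of $C$ would have lift $1$ and so lie on the hyperplane $\{\inner{\cdot}{\boldalpha}=h-1\}$, contradicting $\dim\conv(C)=n$). Say $-\boldv\in C$. The remaining points of $C$ all have lift $1$ and satisfy $\inner{\cdot}{\boldalpha}=h-1$; since $\conv(C)$ is $n$-dimensional and $-\boldv$ lies off that hyperplane, these lift-$1$ points must affinely span it, and (as $h-1$ is then the minimum of $\inner{\cdot}{\boldalpha}$ over all of $S$) they are the vertices of a facet $F$ of $\adjp_G$ whose inner normal direction is $\boldalpha$.

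Now I would invoke \Cref{lem:facet-def}. After rescaling $\boldalpha^{\ast}=\boldalpha/(1-h)$, so that $\inner{\cdot}{\boldalpha^{\ast}}=-1$ on $F$ and $\ge -1$ on $\adjp_G$, the vector $\boldalpha^{\ast}$ is the normalized inner normal of a facet. The key extra ingredient is that such facet normals are \emph{integral}: the $n$ linearly independent lattice vectors $\bolde_a-\bolde_b$ furnished by \Cref{lem:facet-def} form a totally unimodular (network) matrix, so the unique solution $\boldalpha^{\ast}$ of $\inner{\bolde_a-\bolde_b}{\boldalpha^{\ast}}=-1$ lies in $\Z^{n}$. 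Consequently $\inner{\boldv}{\boldalpha^{\ast}}=\alpha^{\ast}_{k_1}-\alpha^{\ast}_{k_2}\in\Z$. On the other hand, from $\inner{-\boldv}{\boldalpha}=h$ and $\boldalpha=(1-h)\boldalpha^{\ast}$ one computes $\inner{\boldv}{\boldalpha^{\ast}}=-h/(1-h)$, which lies strictly between $0$ and $1$ when $h<0$. An integer in the open interval $(0,1)$ is impossible, so $h<0$ cannot occur.

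Thus $h=0$ for every cell, which as noted forces $\boldv,-\boldv\in C$. The easy half is the bound $h\le 0$ from antipodal symmetry; the real work—and the only place where graph/polytope structure beyond convexity enters—is ruling out $h<0$, where I rely on the integrality (equivalently, the $\{0,\pm1\}$ acyclic-orientation description) of the facet normals of $\adjp_G$, obtained here from total unimodularity together with \Cref{lem:facet-def}.
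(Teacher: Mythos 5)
Your proof is correct, and at its core it exploits the same phenomenon as the paper's proof --- integrality coming from unimodularity of edge-vector matrices --- but you reach it by a genuinely different and arguably cleaner route. After reducing to the bad case where a cell contains exactly one of $\pm(\bolde_{k_1}-\bolde_{k_2})$ (so $h<0$), both arguments derive the contradiction from the same quantity: your $\inner{\boldv}{\boldalpha^{\ast}} = -h/(1-h)$ is precisely the integer $y=\bolde_k^{\top}A^{-1}\boldone$ appearing in the paper's computation $h = y/(y-1)$, and both proofs conclude that an integer constraint fails when $h<0$. The difference is how integrality is obtained. The paper picks $n$ affinely independent lift-$1$ points of $C$, writes the system $B\boldalpha = -\boldone$, splits into cases according to whether the associated matrix $A$ is singular, and in the nonsingular case justifies unimodularity by a reindexing-to-upper-triangular argument. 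You instead observe that the lift-$1$ points of such a hypothetical cell minimize $\inner{\cdot}{\boldalpha}$ over all of $\phi(\edges(G))$ and affinely span a hyperplane, hence form an entire facet of $\adjp_G$; then \Cref{lem:facet-def} plus total unimodularity of network (incidence) matrices gives $\boldalpha^{\ast}\in\Z^n$. This buys two things: the singular/nonsingular case split disappears entirely (full-dimensionality of $C$ hands you a facet directly), and the unimodularity is a standard fact about incidence matrices rather than an ad hoc triangularization, which is the shakiest step in the paper's own proof. Two points you leave implicit should be stated: under $h<0$ one has $\boldalpha\ne\boldzero$ (otherwise $h=\min\omega_{k_1,k_2}=0$), which your ``cannot omit both'' hyperplane argument needs; and the $n$ linearly independent points for the facet can indeed be taken among the vertices $\bolde_a-\bolde_b$, because the facet lies in a hyperplane avoiding the origin, so affinely independent vertices there are automatically linearly independent.
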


\begin{proof}
    As noted in~\Cref{rmk:refnode}, without loss of generality, 
    we can assume $\{k_1,k_2\} = \{0,k\}$ for some $k \ne 0$.
    In the follow, we consider the regular subdivision induced by $\omega = \omega_{0,k}$
    and will show that $\pm \bolde_k \in C$ for all $C \in \subdiv$.
    
    Fix a cell $C \in \subdiv$, 
    let $\hat{C}$ be the corresponding lower facet of $\hat{\adjp}_G$,
    let $\hat{\boldalpha} = (\boldalpha,1) = (\alpha_1,\dots,\alpha_n,1)$ 
    be the upward pointing inner normal vector of $\hat{C}$,
    and let $h = \min \{ \inner{\hat{\boldalpha}}{ \hat{\boldx} } \mid \hat{\boldx} \in \hat{\adjp}_G \}$.
    
    Suppose $\pm \bolde_k \not\in C$,
    then there is a set of $n+1$ affinely independent points 
    of the form $\bolde_i - \bolde_j$ with $\{ i,j \} \ne \{0,k\}$ in $C$.
    By assumption, $\hat{\boldalpha}$ is orthogonal to the affine span of this set.
    However, since $\omega(\bolde_i-\bolde_j) = 1$ for $\{ i,j \} \ne \{0,k\}$,
    the affine span of $\hat{C}$ must be $\{ (\boldx,1) \mid \boldx \in \R^n \}$,
    and consequently its normal vector $\hat{\boldalpha}$ must be $(0,\dots,0,1)$.
    Then for any $\bolde_i - \bolde_j \in C$,
    \[
        \inner{ (\bolde_i-\bolde_j, \omega(\bolde_i-\bolde_j)) }{ \hat{\boldalpha} } 
        \; = \; 1 \; > \; 0 \; = \;
        \inner{ (\pm \bolde_k, \omega(\pm \bolde_k ) ) }{ \hat{\boldalpha} },
    \]
    contradicting with the assumption that
    $\inner{ \hat{\boldalpha} }{ \cdot }$ minimizes on $\hat{C}$ over $\hat{\adjp}_G$.
    We can conclude then either $\bolde_k$ or $-\bolde_k$ must be in $C$.
    
    Now suppose $\bolde_k \in C$ but $-\bolde_k \not\in C$.
    then $\inner{-\bolde_k}{\boldalpha} > \inner{\bolde_k}{\boldalpha}$
    which implies that $h = \inner{\bolde_k}{\boldalpha} < 0$.
    Since $C$ is an $n$-dimensional cell,
    there is a set $\Delta$ of $n$ affinely independent points in $C$ 
    of the form $\bolde_i - \bolde_j$ with $\{i,j\} \ne \{0,k\}$,
    i.e., $\{ \bolde_i - \bolde_j - \bolde_k \mid \bolde_i-\bolde_j \in \Delta \}$ 
    is a linearly independent set.
    Let $A$ be the $n \times n$ matrix whose rows are points in $\Delta$
    as row vectors,
    and let $B = A - \boldone \bolde_k^\top$, which is nonsignular.
    Recall that $C$ is the projection of a lower facet of $\hat{\adjp}_G$
    defined by the inner normal vector $\hat{\boldalpha} = (\boldalpha,1)$.
    Therefore,
    \[
        \inner{ \bolde_i - \bolde_j }{ \boldalpha } + 1 = \inner{ \bolde_k }{ \boldalpha }
        \quad\text{for each } \bolde_i - \bolde_j \in \Delta,
    \]
    which is equivalent to
    \[
        B \boldalpha = A \boldalpha - \boldone \bolde_k^\top \boldalpha = -\boldone.
    \]
    We will show this contradicts with the assumption that $h < 0$.
    
    Suppose $A$ is singular, let $\boldx$ be a nonzero vector in its null space.
    Then $\bolde_k^\top \boldx \ne 0$, 
    since $B \boldx = A \boldx - \boldone \bolde_k^\top \boldx$ cannot be zero.
    We can verify that $\boldalpha = \boldx / \bolde_k^\top \boldx$,
    and thus 
    \[
        h = \inner{ \bolde_k }{\boldalpha } = \bolde_k^\top \boldx / \bolde_k^\top \boldx = 1,
    \]
    which contradicts with the assumption that $h < 0$.
    
    On the other hand, if $A$ is nonsingular, then without loss of generality,
    it is possible to re-index the nodes $\{0,1,\dots,n\} \setminus \{0,k\}$
    so that for each $i \ne 0,k$, $\pm (\bolde_i - \bolde_j) \in \Delta$ implies $j>i$.
    With this arrangement, $A$ is upper triangular and its diagonal entries are $\pm 1$.
    Therefore, $A$ is unimodular. 
    Consequently, $A^{-1}$ exists and is an integer matrix.
    Recall that $B \boldalpha = A \boldalpha - \boldone \bolde_k^\top \boldalpha = -\boldone$,
    and $h = \bolde_k^\top \boldalpha$.
    This equation can be written as
    \begin{align*}
        A \, \boldalpha &= (h - 1) \, \boldone, &
        &\text{i.e.} &
        \boldalpha &= (h - 1) \, A^{-1} \, \boldone,
    \end{align*}
    which gives us the relation
    \[
        h = \bolde_k^\top \, \boldalpha = h (\bolde_k^\top \, A^{-1} \boldone) - \bolde_k^\top A^{-1} \boldone
        = y \, h - y
    \]
    if we let $y= \bolde_k^\top A^{-1} \boldone$.
    The above equation implies that $y \ne 1$.
    Moreover, since $A^{-1}$ is an integer matrix, $y \in \Z$.
    Therefore, 
    \[
        h = \frac{y}{y-1} \ge 0
    \]
    contradicting with the assumption that $h = \bolde_k^\top \boldalpha < 0$.
    That is, the assumption $\bolde_k \in C$ but $\-\bolde_k \not\in C$
    leads to a contradiction.
    We can therefore conclude that $\bolde_k \in C$ implies $-\bolde_k \in C$.
    By the same argument,
    it can be shown that $-\bolde_k \in C$ implies $\bolde_k \in C$.
    Hence, $\pm \bolde_k \in C$. 
\end{proof}

\begin{corollary} \label{Cor: h=0}
    Consider contraction of $G$ along the edge $\{k_1,k_2\}$.
    Given a cell $C \in \subdiv$, 
    let $\hat{C}$ be the corresponding lower facet of $\hat{\adjp}_G$. 
    If $\hat{\boldalpha} = (\boldalpha,1) = (\alpha_1,\dots,\alpha_n,1)$ 
    is an upward pointing inner normal vector of $\hat{C}$, 
    then $\inner{ \hat{\boldx} }{\hat{\boldalpha}} = 0$
    for any $\hat{\boldx} \in \hat{C}$. 
    In particular, 
    \begin{align*}
        \inner{ {\boldx} }{{\boldalpha} }
        &=
        \begin{cases}
            0 & \text{if } \{ i,j \} = \{ k_1, k_2 \}, \\
            -1 & \text{otherwise},
        \end{cases}
        &&\text{and} &
        \alpha_{k_1} = \alpha_{k_2}.
    \end{align*}
\end{corollary}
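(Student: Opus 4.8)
The plan is to read off everything directly from \Cref{lem:special-edge} together with the defining relations \eqref{equ:lower-facet} of a lower facet. The key observation is that \Cref{lem:special-edge} guarantees both special-edge points $\bolde_{k_1} - \bolde_{k_2}$ and $\bolde_{k_2} - \bolde_{k_1}$ lie in the cell $C$, and by the construction of $\omega_{k_1,k_2}$ in \Cref{def:edge-subdiv} these two points are lifted by $0$. Hence their lifts $(\bolde_{k_1} - \bolde_{k_2},\, 0)$ and $(\bolde_{k_2} - \bolde_{k_1},\, 0)$ both sit on the lower facet $\hat{C}$, where the functional $\inner{\cdot}{\hat{\boldalpha}}$ attains its minimum value $h$.

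First I would evaluate $\inner{\cdot}{\hat{\boldalpha}}$ at these two lifted points. Using $\hat{\boldalpha} = (\boldalpha, 1)$ and the zero lift, this yields the pair of scalar equations $\alpha_{k_1} - \alpha_{k_2} = h$ and $\alpha_{k_2} - \alpha_{k_1} = h$ (here I adopt the convention $\bolde_0 = \boldzero$ and read $\alpha_0 = 0$ in case one endpoint of the contracted edge is the reference node). Adding the two equations gives $2h = 0$, hence $h = 0$; substituting back into either equation gives $\alpha_{k_1} = \alpha_{k_2}$. The first claim, that $\inner{\hat{\boldx}}{\hat{\boldalpha}} = 0$ for all $\hat{\boldx} \in \hat{C}$, is then immediate, since every point of a lower facet satisfies $\inner{\cdot}{\hat{\boldalpha}} = h$ and we have just shown $h = 0$.

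For the ``in particular'' piecewise formula I would simply unpack \eqref{equ:lower-facet} with $h = 0$ for each point $\bolde_i - \bolde_j \in C$: when $\{i,j\} = \{k_1,k_2\}$ the lift is $0$, so $\inner{\bolde_i - \bolde_j}{\boldalpha} = h - 0 = 0$; otherwise the lift is $1$, so $\inner{\bolde_i - \bolde_j}{\boldalpha} = h - 1 = -1$. This recovers both cases at once.

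I do not expect a substantive obstacle, since the heavy lifting is already done by \Cref{lem:special-edge}. The only point demanding care is the indexing convention for the reference node: because $\boldalpha$ carries coordinates only for nodes $1,\dots,n$, the equality $\alpha_{k_1} = \alpha_{k_2}$ must be read with $\alpha_0 = 0$ whenever $0 \in \{k_1,k_2\}$, consistent with \Cref{rmk:refnode}. Once this convention is pinned down, the symmetric pair of equations coming from $\pm(\bolde_{k_1} - \bolde_{k_2})$ closes the argument with no further computation.
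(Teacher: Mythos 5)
Your proposal is correct and is exactly the intended argument: the paper states this corollary without proof as an immediate consequence of \Cref{lem:special-edge}, and your derivation — both points $\pm(\bolde_{k_1}-\bolde_{k_2})$ lie in $C$ with lift $0$, so the two equations $\alpha_{k_1}-\alpha_{k_2}=h$ and $\alpha_{k_2}-\alpha_{k_1}=h$ force $h=0$, after which \eqref{equ:lower-facet} yields the piecewise formula and $\alpha_{k_1}=\alpha_{k_2}$ — fills in precisely that omitted step. Your remark about reading $\alpha_0=0$ when the reference node is an endpoint of the contracted edge is also the right convention and matches \Cref{rmk:refnode}.
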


In the following, using the special edge contraction subdivision,
we establish the link between $\adjp_G$ and $\adjp_{G \sslash \{k_1,k_2\}}$.

\subsection{Two subgraphs sharing an edge}\label{subsection: Two subgraphs}

\begin{figure}
    \centering
    \begin{subfigure}[b]{0.33\textwidth}
        \centering
        \begin{tikzpicture}[
            every node/.style={circle,thick,draw,inner sep=2.5},
            every edge/.style={draw,thick}]
            \node (0) at ( 0,1) {0};
            \node (1) at (-1,1) {1};
            \node (2) at (-1,0) {2};
            \node (3) at ( 0,0) {3};
            \node (4) at (0.85,-.2) {4};
            \node (6) at (0.85,1.2) {6};
            \node (5) at (1.5,0.5) {5};
            \path (0) edge (1);
            \path (0) edge[color=red,style={double}] (3);
            \path (1) edge (2);
            \path (0) edge (2);
            \path (2) edge (3);
            \path (0) edge (6);
            \path (3) edge (4);
            \path (4) edge (5);
            \path (6) edge (5);
            \path (5) edge (3);
            \path (5) edge (0);
        \end{tikzpicture}
        \caption{Graph $G$ formed by two subgraphs sharing an edge}
        \label{fig:G}
    \end{subfigure}
    ~
    \begin{subfigure}[b]{0.3\textwidth}
        \centering
        \begin{tikzpicture}[
            every node/.style={circle,thick,draw,inner sep=2.5},
            every edge/.style={draw,thick}]
            \node (0) at ( 0,0.5) {0};
            \node (1) at (-1,1) {1};
            \node (2) at (-1,0) {2};
            \node (4) at (0.80,-.2) {4};
            \node (6) at (0.80,1.2) {6};
            \node (5) at (1.6,0.5) {5};
            \path (0) edge (1);
            \path (1) edge (2);
            \path (2) edge (0);
            \path (0) edge (6);
            \path (0) edge (4);
            \path (4) edge (5);
            \path (6) edge (5);
            \path (5) edge (0);
            \path (5) edge (0);
        \end{tikzpicture}
        \caption{Graph $G'$ resulted from edge contraction}
        \label{fig:G-shrink}
    \end{subfigure}
    ~
    \begin{subfigure}[b]{0.30\textwidth}
        \centering
        \begin{tikzpicture}[
            every node/.style={circle,thick,draw,inner sep=2.5},
            every edge/.style={draw,thick}]
            \node (0) at ( 0,0.5) {0};
            \node (1) at (-1,1) {1};
            \node (2) at (-1,0) {2};
            \node (3) at ( 1,0.5) {0};
            \node (4) at (1.80,-.2) {4};
            \node (6) at (1.80,1.2) {6};
            \node (5) at (2.6,0.5) {5};
            \path (0) edge (1);
            \path (1) edge (2);
            \path (2) edge (0);
            \path (3) edge (6);
            \path (3) edge (4);
            \path (4) edge (5);
            \path (6) edge (5);
            \path (5) edge (3);
            \path (5) edge (3);
        \end{tikzpicture}
        \caption{Two subgraphs $G_1'$ and $G_2'$ of $G'$}
        \label{fig:subgraphs}
    \end{subfigure}
    \caption{Edge contraction on a graph}
    \label{fig:example1}
\end{figure}
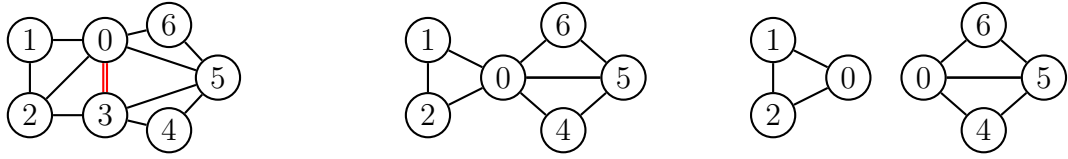

We first consider the case where the target graph $G$
consists of two sub-graphs sharing a single edge
with the two corresponding nodes forming a cut set.
\Cref{fig:G} shows an example of such a graph.
That is, there are two graphs $G_1 = (V_1, E_1)$ and $G_2 = (V_2, E_2)$
such that $\nodes(G) = V_1 \cup V_2, \edges(G) = E_1 \cup E_2$,
and there is one edge $e = \{ k_1, k_2 \} \in \edges(G)$ such that
$V_1 \cap V_2 = \{k_1,k_2\}$ and
$E_1 \cap E_2 = \{ e \}$.
The contraction $G' = G \sslash e$, shown in \Cref{fig:G-shrink}, 
thus has a cut vertex, which allows us to consider the two separate graphs (\Cref{fig:subgraphs}).

\begin{theorem} \label{thrm: 1-1 correspondence}
    For a connected graph $G$ consisting of two subgraphs $G_1$ and $G_2$
    sharing a single edge $e = \{ k_1, k_2 \}$,
    let $G_1' = G_1 \sslash e$ and $G_2' = G_2 \sslash e$.
    Then the cells in the edge contraction subdivision $\subdiv_{k_1,k_2}$ of $\adjp_G$
    induced by the contraction of $G$ along $\{k_1,k_2\}$
    are in one-to-one correspondence
    with pairs in $\facets(\adjp_{G_1'}) \times \facets(\adjp_{G_2'})$.
\end{theorem}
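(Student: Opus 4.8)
The plan is to factor the desired correspondence through the facets of the intermediate polytope $\adjp_{G'}$, where $G' = G \sslash e$, by composing two bijections: one between the cells of $\subdiv_{k_1,k_2}$ and the facets of $\adjp_{G'}$, and one between the facets of $\adjp_{G'}$ and the pairs in $\facets(\adjp_{G_1'}) \times \facets(\adjp_{G_2'})$. By \Cref{rmk:refnode} I may assume $\{k_1,k_2\} = \{0,k\}$. Let $\pi \colon \R^n \to \R^{n-1}$ be the coordinate projection forgetting the $k$-th coordinate. Since $\pi(\bolde_i - \bolde_j)$ is a generator of $\adjp_{G'}$ for every edge $\{i,j\} \ne \{0,k\}$, while $\pi(\pm \bolde_k) = \boldzero$, the convex hull of the projected generators is exactly $\adjp_{G'}$.

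First I would establish the bijection between cells and facets of $\adjp_{G'}$. Fix a cell $C$ with upward-pointing inner normal $(\boldalpha,1)$. \Cref{Cor: h=0} gives $\alpha_k = 0$, so $\inner{\bolde_i - \bolde_j}{\boldalpha} = \inner{\pi(\bolde_i - \bolde_j)}{\pi(\boldalpha)}$ for every generator; hence the inequality $\inner{\cdot}{\boldalpha} \ge -1$ valid on $\adjp_G$ descends to $\inner{\cdot}{\pi(\boldalpha)} \ge -1$ on $\adjp_{G'}$. By \Cref{lem:special-edge}, $C$ contains $\pm\bolde_k$ together with $n-1$ further generators whose images under $\pi$ are linearly independent; by \Cref{Cor: h=0} these sit on $\inner{\cdot}{\pi(\boldalpha)} = -1$, so \Cref{lem:facet-def} (applied in dimension $n-1$) shows $\pi(\boldalpha)$ defines a facet of $\adjp_{G'}$. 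Conversely, given a facet of $\adjp_{G'}$ with normal $\boldbeta$, I would lift it to $\boldalpha \in \R^n$ by inserting a zero in the $k$-th slot and verify, using the weights \eqref{equ:weights}, that $(\boldalpha,1)$ attains its minimum over $\hat{\adjp}_G$ precisely on $\pm\bolde_k$ and on the preimages of the facet's vertices, so it cuts out a lower facet and hence a cell of $\subdiv_{k_1,k_2}$. Matching the two constructions shows $C \mapsto \pi(\boldalpha)$ is a bijection onto $\facets(\adjp_{G'})$.

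Next I would identify $\adjp_{G'}$ as a direct (free) sum. Because $e$ is the only shared edge and $\{k_1,k_2\}$ is a cut set, the merged node is a cut vertex of $G'$ separating $G_1'$ from $G_2'$, and the node sets $\nodes(G_1')\setminus\{0\}$ and $\nodes(G_2')\setminus\{0\}$ partition $\{1,\dots,n\}\setminus\{k\}$. Thus the generators of $\adjp_{G_1'}$ and $\adjp_{G_2'}$ lie in complementary coordinate subspaces $U_1, U_2$ with $U_1 \oplus U_2 = \R^{n-1}$, each containing $\boldzero$ as an interior point, so $\adjp_{G'} = \adjp_{G_1'} \oplus \adjp_{G_2'}$. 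Writing an inner normal as $\boldbeta = \boldbeta_1 + \boldbeta_2$ with $\boldbeta_i \in U_i$, the defining inequality splits into one inequality over each summand; \Cref{lem:facet-def} together with a dimension count on the join of the two resulting equality faces then shows that $\boldbeta$ defines a facet of $\adjp_{G'}$ if and only if $\boldbeta_1$ and $\boldbeta_2$ define facets of $\adjp_{G_1'}$ and $\adjp_{G_2'}$ respectively. This yields the bijection $\facets(\adjp_{G'}) \leftrightarrow \facets(\adjp_{G_1'}) \times \facets(\adjp_{G_2'})$, with the degenerate case where some $G_i'$ is a single node absorbed by the convention that $\{\boldzero\}$ has the unique facet $\varnothing$.

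Composing the two bijections sends a cell $C$, via $\pi(\boldalpha) = \boldbeta_1 + \boldbeta_2$, to the pair of facets cut out by $\boldbeta_1$ and $\boldbeta_2$. I expect the main obstacle to be the surjectivity half of the first bijection: one must check that an arbitrary facet normal of $\adjp_{G'}$, once lifted with a zero $k$-th coordinate, genuinely produces an $n$-dimensional lower facet of $\hat{\adjp}_G$, i.e.\ that linear independence is neither lost nor spuriously gained under $\pi$ and that the lifted functional is minimized exactly on a full-dimensional cell. \Cref{lem:special-edge} and \Cref{Cor: h=0} do much of this work, since they pin down $\alpha_k = 0$ and force $\pm\bolde_k$ into every cell, but the bookkeeping relating the $n-1$ independent generators downstairs to the $n$ independent lifted points upstairs is where the care is required.
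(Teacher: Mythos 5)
Your proposal is correct, but it takes a genuinely different route from the paper. The paper proves the two-subgraph statement directly in one pass: it splits the cell normal as $\boldgamma = (\boldalpha, 0, \boldbeta)$ using \Cref{lem:special-edge}, shows via an equality count and \Cref{lem:facet-def} that the two pieces cut out facets of $\adjp_{G_1'}$ and $\adjp_{G_2'}$, and then constructs the inverse map explicitly from a pair of facet normals; the single-graph statement (cells $\leftrightarrow$ facets of $\adjp_{G'}$) is afterwards derived as a corollary by taking $G_2$ to be the edge itself. You invert this logical order: you first prove the single-graph correspondence directly (using the same ingredients --- \Cref{lem:special-edge}, \Cref{Cor: h=0}, \Cref{lem:facet-def} --- plus the coordinate projection $\pi$ killing the $k$-th slot), and then obtain the product structure from the free-sum decomposition $\adjp_{G'} = \adjp_{G_1'} \oplus \adjp_{G_2'}$ at the cut vertex, whose facets are joins $\conv(F_1 \cup F_2)$ of facets of the summands (dually, vertices of $P^* \times Q^*$). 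Both routes are sound, and the composite map you get agrees with the paper's $q_{k_1,k_2}$. What your factorization buys is modularity: it isolates the genuinely new content (the lifting/contraction correspondence) from a standard polytopal fact about free sums, and it extends immediately to graphs glued at a vertex into more than two blocks. What the paper's one-shot argument buys is self-containment --- no free-sum machinery --- and an explicit description of the cell built from a given pair of facets, which is reused later. The technical points you flag (that the $n-1$ linearly independent projections exist, and that the lifted normal cuts a full-dimensional lower facet) are real but fillable exactly by the dimension counts you indicate: points of $C$ other than $\pm\bolde_k$ lie on the affine hyperplane $\inner{\cdot}{\boldalpha} = -1$ avoiding the origin, so affine independence coincides with linear independence there, and joining with the segment $[-\bolde_k, \bolde_k]$ through the origin raises the dimension by exactly two.
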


\begin{proof}
    As before, using the observation provided in~\Cref{rmk:refnode}, 
    we can assume $\{0,k\}$ is the shared edge for some $k \ne 0$.
    In addition, we assume the index $k$ is chosen so that
    $i < k$ for all $i \in \nodes(G_1)$ and $k < j$ for all $j \in \nodes(G_2)$.
    That is, after renaming the nodes, we assume 
    nodes $1,2,\dots,k-1$ are in $G_1$,
    nodes $k+1,\dots,n$   are in $G_2$, and
    nodes $0,k$ are in both.
    We only need to consider the subdivision $\subdiv_{0,k}$.
    
    
    By definition, the adjacency polytopes $\adjp_{G_1'}$ and $\adjp_{G_2'}$ are 
    full-dimensional polytopes in the subspaces
    $\R^{k-1} \times \{ \boldzero_{n-k+1} \}$ and
    $\{ \boldzero_{k} \} \times \R^{n-k}$ respectively.
    So their facets are of dimensions $k-2$ and $n-k-1$ respectively.
    
    By~\Cref{lem:special-edge}, any cell $C$ must contain both $\pm \bolde_k$,
    and therefore its upward pointing inner normal vector 
    $\hat{\boldgamma} = (\boldgamma,1) \in \R^{n+1}$
    that defines the lower facet $\hat{C}$ of $\hat{\adjp}_G$
    satisfies $\inner{ \pm \bolde_k }{ \boldgamma } = 0$.
    Thus $\hat{C}$ is contained in the hyperplane $\inner{ \cdot }{ \hat{\boldgamma} } = 0$, and
    \[
        \boldgamma = (\boldalpha, 0, \boldbeta)
        \quad
        \text{for some } \boldalpha \in \R^{k-1}
        \text{ and }     \boldbeta  \in \R^{n-k}.
    \]
    Let $C_1$ and $C_2$ be the sets of projections of points in $C$ in
    $\R^{k-1} \times \{ \boldzero_{n-k+1} \}$ and
    $\{ \boldzero_k \} \times \R^{n-k}$ respectively.
    We will show the nonzero points in $C_1$ and $C_2$ 
    define facets of $\adjp_{G_1'}$ and $\adjp_{G_2'}$ respectively.
    Since $\gamma = (\boldalpha,0,\boldbeta)$,
    the equation \eqref{equ:lower-facet} which defines the lower facet $\hat{C}$ implies
    \begin{align}
        \inner{ \bolda }{            (\boldalpha, \boldzero_{n-k+1}) } &=   -1 
        &&\text{for all } \boldzero \ne \bolda \in C_1, 
        \label{equ:C1} \\ 
        \inner{ \boldb }{            (\boldzero_k,\boldbeta) }         &=   -1 
        &&\text{for all } \boldzero \ne \boldb \in C_2, 
        \label{equ:C2} \\
        \inner{ \bolde_i-\bolde_j }{ (\boldalpha, \boldzero_{n-k+1}) } &\ge -1 
        &&\text{for all } \{i,j\} \in \edges(G_1'), \nonumber
        \\
        \inner{ \bolde_i-\bolde_j }{ (\boldzero_k,\boldbeta) }         &\ge -1 &&\text{for all } \{i,j\} \in \edges(G_2'),
        \nonumber
    \end{align}
    and there are at least $n-1$ equalities in total in this system
    (since the two equalities associated with $\pm \bolde_k$ are removed).
    Moreover, since $\dim(\conv(C)) = n$, 
    the corresponding points can be chosen to be linearly independent as vectors.
    Therefore, there are at least $k-1$ and $n-k$ equalities among
    \eqref{equ:C1} and \eqref{equ:C2} respectively.
    By~\Cref{lem:facet-def}, the vectors 
    $(\boldalpha, \boldzero_{n-k+1})$ and $(\boldzero_k, \boldbeta)$
    must define facets in $\adjp_{G_1'}$ and $\adjp_{G_2'}$ respectively.
    That is, if we identify each cell with its corresponding upward-pointing inner normal vector,
    then the map
    \[
        \gamma
        \;\mapsto\;
        (\boldalpha,\boldbeta)
        \in \R^{k-1} \times \R^{n-k}
    \]
    sends each cell in $\subdiv_{0,k}$ to a pair of facets of 
    $\adjp_{G_1'}$ and $\adjp_{G_2'}$ respectively.
    
    We now simply have to show that this map has an inverse.
    Suppose $F_1'$ and $F_2'$ are two facets of $\adjp_{G_1'}$ and $\adjp_{G_2'}$ respectively.
    We will construct a  corresponding cell in $\subdiv_{0,k}$.
    Let $\bolda_1,\dots,\bolda_{m_1'} \in \phi(\edges(G_1'))$
    and $\boldb_1,\dots,\boldb_{m_2'} \in \phi(\edges(G_2'))$
    be the points defining $F_1'$ and $F_2'$ respectively
    for some $m_1' \ge k-1$ and $m_2' \ge n-k$.
    Then by~\Cref{lem:facet-def}, there are vectors 
    $\boldalpha \in \R^{k-1}$ and $\boldbeta \in \R^{n-k}$ such that
    \begin{equation*}
        \begin{aligned}
            \inner{ \bolda_i }{            (\boldalpha, \boldzero_{n-k+1}) } &= -1 \quad\text{for all } i=1,\dots,m_1', \\
            \inner{ \bolde_i - \bolde_j }{ (\boldalpha, \boldzero_{n-k+1}) } &\ge -1 \quad\text{for all } \{i,j\} \in \edges(G_1'), \\
            \inner{ \boldb_i }{            (\boldzero_k, \boldbeta) } &= -1 \quad\text{for all } i=1,\dots,m_2', \\
            \inner{ \bolde_i - \bolde_j }{ (\boldzero_k, \boldbeta) } &\ge -1 \quad\text{for all } \{i,j\} \in \edges(G_2').
        \end{aligned}
    \end{equation*}
    Define
    \[
        \boldgamma = ( \boldalpha, 0, \boldbeta) \in \R^{k-1 + 1 + n-k} = \R^n.
    \]
    We will verify that $\hat{\boldgamma} = (\boldgamma,1) \in \R^{n+1}$ 
    defines a lower facet of $\hat{\adjp}_G$.
    Let
    \begin{align*}
        C_1 &=
        \left\{
            (\bolda_i, p, \boldzero)
            \;\mid\;
            p \in \{-1,0,1\}
            \;,\;
            i=1,\dots,m_1'
        \right\}
        \;\cap\;
        \phi_0(\edges(G)),
        \\
        C_2 &=
        \left\{
            ( \boldzero, p , \boldb_i )
            \;\mid\;
            p \in \{-1,0,1\}
            \;,\;
            i=1,\dots,m_2'
        \right\}
        \;\cap\;
        \phi_0(\edges(G)).
    \end{align*}
    Then
    \begin{align*}
        \inner{ (\bolda_i , p , \boldzero ) }{ ( \boldalpha , 0 , \boldbeta ) } + 1
        &=
        \inner{\bolda_i}{\boldalpha} + 1
        = 
        -1 + 1
        =
        0
        &&\text{for each }
        ( \bolda_i , p , \boldzero )
        \in C_1,\\
        \inner{  (\boldzero, p, \boldb_i) }{ (\boldalpha , 0 , \boldbeta) } + 1
        &=
        \inner{ \boldb_i }{ \boldbeta } + 1
        = 
        -1 + 1
        =
        0
        &&\text{for each }
        ( \boldzero, p, \boldb_i )
        \in C_2.
    \end{align*}
    Moreover, $\inner{ (\pm \bolde_k,0) }{ \hat{\boldgamma} } = 0$.
    Let
    \[
        C = C_1 \cup C_2 \cup \{ \pm \bolde_k \},
    \]
    then $\hat{C}$ is contained in the hyperplane defined by $\inner{ \cdot }{ \hat{\gamma} } = 0$.
    For points in $\phi(\edges(G)) \setminus \{ \pm \bolde_k \}$,
    direct computation confirms that
    \begin{align*}
        \inner{ \pm (\bolde_i - \bolde_j) }{ \boldgamma } + 1 &=
        \inner{ \pm (\bolde_i - \bolde_j) }{ (\boldalpha, \boldzero_{n-k+1}) } + 1 \ge 0
        &&\text{for any } i,j < k, 
        \\
        \inner{ \pm (\bolde_i - \bolde_j) }{ \boldgamma } + 1 &=
        \inner{ \pm (\bolde_i - \bolde_j) }{ (\boldzero_k, \boldbeta) } + 1 \ge 0
        &&\text{for any } i,j > k, 
        \\
        \inner{ \pm (\bolde_i - \bolde_k) }{ \boldgamma } + 1 &=
        \inner{ \pm \bolde_i }{ (\boldalpha, \boldzero_{n-k+1}) } + 1 \ge 0
        &&\text{for any } 0 < i < k ,
        \\
        \inner{ \pm (\bolde_j - \bolde_k) }{ \boldgamma } + 1 &=
        \inner{ \pm \bolde_j }{ (\boldzero_k, \boldbeta) } + 1 \ge 0
        &&\text{for any } j > k ,
        \\
        \inner{ \pm (\bolde_j - \bolde_0) }{ \boldgamma } + 1 &=
        \inner{ \pm \bolde_j }{ (\boldzero_k, \boldbeta) } + 1 \ge 0
        &&\text{for any } j > k.
    \end{align*}
    Therefore, $\conv(C)$ is a projection of a lower face.
    By construction,
    \[ 
        |C| = |C_1| + |C_2| + | \{ \pm \bolde_k \}| \ge (k-1) + (n-k) + 2 = n + 1, 
    \]
    and these points have affinely indepedent projections in 
    $\R^{k-1} \times \{ \boldzero \}$,
    $\{ \boldzero_{k-1} \} \times \R \times \{ \boldzero_{n-k} \}$, or
    $\{ \boldzero_{k} \} \times \R^{n-k}$.
    So,
    \[ 
        \dim(\conv(C)) \ge k-1 + 1 + n-k = n.
    \]
    Therefore, $C$ must be a projection of a lower facet of $\hat{\adjp}_G$
    and hence a cell in $\subdiv_{0,k}$.
\end{proof}

The theorem above establishes a bijection between cells in $\subdiv_{k_1,k_2}$
and pairs of faces in $\facets(\adjp_{G_1'})$ and $\facets(\adjp_{G_2'})$.
For later reference, this bijection will be denoted by
$q_{k_1,k_2} : \subdiv_{k_1,k_2} \to \facets(\adjp_{G_1'}) \times \facets(\adjp_{G_2'})$
and given by
\[
    q_{k_1,k_2} (C) = (F_1,F_2),
\]
where $F_1$ and $F_2$ are simply the convex hull of the projections of $C$
in the coordinate-subspaces in which $\adjp_{G_1'}$ and $\adjp_{G_2'}$ are full-dimensional.

\begin{remark}
    Note that $q_{k_1,k_2}$ is only a bijection between the set of cells in $\subdiv_{k_1,k_2}$
    and the set $\facets(\adjp_{G_1'}) \times \facets(\adjp_{G_2'})$.
    The points in $C$ themselves may not be in one-to-one correspondence
    with vertices in $F_1$ and $F_2$.
    In general, the projection that maps points in $C$
    to vertices of $F_1$ and $F_2$ may be not be one-to-one.
    This is a reflection of the fact that the edge-contraction operation
    may map multiple edges to the same edge,
    since we only allow simple graphs 
    (graphs with no multiple edges and loops).
\end{remark}

In certain applications (e.g., the root counting problem for algebraic Kuramoto equations),
simplicial cells are of great importance
as they form the minimum building blocks of $\adjp_G$.
We shall show such simplicial cells corresponds to simplicial facets
of the adjacency polytopes derived from the graph contraction.

\begin{theorem} \label{thrm: simplical cell volume}
    Suppose $G$ is a graph consisting of two subgraphs $G_1$ and $G_2$
    sharing a single edge $e = \{ k_1, k_2 \}$,
    with $G_1' = G_1 \sslash e$ and $G_2' = G_2 \sslash e$.
    Let $C$ be a cell in the edge contraction subdivision $\subdiv_{k_1,k_2}$
    with $q_{k_1,k_2}(C) = (F_1,F_2)$ for some facets $F_1$ and $F_2$
    of $\adjp_{G_1'}$ and $\adjp_{G_2'}$ respectively. 
    If $C$ is simplicial, then $F_1$ and $F_2$ are both simplicial. 
\end{theorem}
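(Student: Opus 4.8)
The plan is to prove this by a vertex-counting argument, balancing the number of vertices of the simplex $\conv(C)$ against the numbers of vertices of the two facets $F_1$ and $F_2$. First I would invoke \Cref{rmk:refnode} to assume $\{k_1,k_2\}=\{0,k\}$ and arrange the nodes so that $1,\dots,k-1$ lie in $G_1$, $k+1,\dots,n$ lie in $G_2$, and $0,k$ are shared, exactly as in the proof of \Cref{thrm: 1-1 correspondence}. Write $\pi_1\colon\R^n\to\R^{k-1}$ and $\pi_2\colon\R^n\to\R^{n-k}$ for the projections onto the first $k-1$ and the last $n-k$ coordinates, the subspaces in which $\adjp_{G_1'}$ and $\adjp_{G_2'}$ are full-dimensional. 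By \Cref{lem:special-edge} both $\pm\bolde_k$ belong to $C$, and using the normal $\hat{\boldgamma}=(\boldgamma,1)$ with $\boldgamma=(\boldalpha,0,\boldbeta)$ from \Cref{Cor: h=0} one checks they are genuine vertices of $\conv(C)$: the functional $\inner{\cdot}{\boldgamma}$ takes value $0$ on each of $\pm\bolde_k$ and value $-1$ on every other defining point of $C$, so neither can be a convex combination of the remaining vertices. I would then partition the vertex set $V$ of $\conv(C)$ as $V=\{\bolde_k,-\bolde_k\}\sqcup S_1\sqcup S_2$, where $S_1$ collects the vertices arising from edges in $\edges(G_1)\setminus\{e\}$ (these have $\pi_2=\boldzero$) and $S_2$ those from $\edges(G_2)\setminus\{e\}$ (these have $\pi_1=\boldzero$); every vertex is a point $\bolde_i-\bolde_j$ from an edge of $G=G_1\cup G_2$, and only $e$ produces $\pm\bolde_k$, so the partition is exhaustive and disjoint.

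The counting then runs as follows. If $C$ is simplicial, then $|V|=n+1$, hence $|S_1|+|S_2|=n-1$. On the other side, $F_1$ is a facet of the full-dimensional polytope $\adjp_{G_1'}\subset\R^{k-1}$, so $\dim F_1=k-2$ and $F_1$ has at least $k-1$ vertices; likewise $F_2$ has at least $n-k$ vertices. Writing $W_1,W_2$ for the vertex sets of $F_1,F_2$, the heart of the matter is the pair of inequalities $|W_1|\le|S_1|$ and $|W_2|\le|S_2|$. Granting these, everything collapses into the squeeze
\begin{equation*}
    (k-1)+(n-k)\;\le\;|W_1|+|W_2|\;\le\;|S_1|+|S_2|\;=\;n-1\;=\;(k-1)+(n-k),
\end{equation*}
which forces $|W_1|=k-1=\dim F_1+1$ and $|W_2|=n-k=\dim F_2+1$, i.e.\ both facets are simplices.

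The hard part will be establishing $|W_1|\le|S_1|$ (the case of $F_2$ being symmetric), and here some care is needed, since as a subset of the point configuration $C$ may contain points that are not vertices of $\conv(C)$, and the projection $\pi_1$ need not be injective on $S_1$ (distinct edges such as $\{i,0\}$ and $\{i,k\}$ are identified by the contraction and map to the same point of $\adjp_{G_1'}$). I would prove the sharper statement $F_1=\conv(\pi_1(S_1))$, from which $W_1\subseteq\pi_1(S_1)$ and hence $|W_1|\le|\pi_1(S_1)|\le|S_1|$ follow at once. By the construction of the bijection $q_{k_1,k_2}$ in the proof of \Cref{thrm: 1-1 correspondence}, $F_1$ lies on the hyperplane $\inner{\cdot}{\boldalpha}=-1$ and is the convex hull of the nonzero points among the $\pi_1$-projections of $C$; in particular $\conv(\pi_1(S_1))\subseteq F_1$ since each point of $\pi_1(S_1)$ is a vertex of $\adjp_{G_1'}$ on that hyperplane. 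For the reverse inclusion, note $\pi_1(C)\subseteq\conv(\pi_1(V))=\conv(\{\boldzero\}\cup\pi_1(S_1))$, the last equality because $\pi_1$ sends both $\pm\bolde_k$ and every point of $S_2$ to $\boldzero$.

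This last convex hull is a pyramid with apex $\boldzero$ over the base $\conv(\pi_1(S_1))$, and the pyramid-slicing step is the crux: since $\inner{\boldzero}{\boldalpha}=0$ while $\inner{\boldx}{\boldalpha}=-1$ for every $\boldx\in\pi_1(S_1)$, intersecting the pyramid with the hyperplane $\inner{\cdot}{\boldalpha}=-1$ returns exactly its base, so a point of the pyramid can lie on that hyperplane only if its apex-weight is zero. As $F_1$ lies inside the pyramid and on that hyperplane, $F_1\subseteq\conv(\pi_1(S_1))$, giving the desired equality $F_1=\conv(\pi_1(S_1))$ and hence $W_1\subseteq\pi_1(S_1)$. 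Once this and its $F_2$-analogue are in place, the dimension lower bounds and the squeeze above close the argument; I expect the only genuine subtlety to be the bookkeeping showing that no vertex of $F_1$ escapes $\pi_1(S_1)$, which the pyramid argument resolves.
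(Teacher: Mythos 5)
Your proof is correct and is essentially the paper's own argument: both hinge on counting points of $C$ against the vertices of $F_1$ and $F_2$, using that $\pm\bolde_k\in C$ project to zero (so they contribute nothing to either facet) and that $\dim F_1=k-2$, $\dim F_2=n-k-1$ force at least $k-1$ and $n-k$ vertices respectively. The only difference is organizational — the paper runs the count as a contrapositive (a non-simplicial facet would force $|C|\ge n+2$, contradicting $|C|=n+1$), while you run it as a direct squeeze, with your pyramid-slicing step merely making explicit the facet-vertex-to-cell-point correspondence that the paper asserts in one line.
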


\begin{proof}
    Without loss of generality, we still adopt the convention that
    $\{0,k\}$ is the shared edge, 
    and $i \le k$ for all $i \in \nodes(G_1)$ and $k \le j$ for all $j \in \nodes(G_2) \setminus \{0\}$.
    With this convention, $\adjp_{G_1'}$ and $\adjp_{G_2'}$ are embedded in
    $\R^{k-1} \times \{ \boldzero_{n-k+1} \}$ and $\{ \boldzero_{k} \} \times \R^{n-k}$ respectively.
    
    Let $C \in \subdiv_{0,k}$ be a simplicial cell,
    and let $(F_1,F_2) = q_{0,k}(C)$,
    then $F_1$ and $F_2$, being facets of $\adjp_{G_1'}$ and $\adjp_{G_2'}$, 
    are of dimensions $k-2$ and $n-k-1$ respectively.
    Suppose either $F_1$ or $F_2$ is not simplicial,
    then the combined total number of vertices is at least
    \[
        (k-2+1) + (n-k-1+1) + 1 = n.
    \]
    Since these points are nonzero projections of points in $C$,
    so $C$ contains convex independent set of $n$ points 
    $\boldx_1,\dots,\boldx_n \in \phi(\edges(G))$
    with nonzero projections in 
    $\R^{k-1} \times \{ \boldzero_{n-k+1} \}$ or 
    $\{ \boldzero_{k} \} \times \R^{n-k} \}$.
    In addition, $C$ contains two points $\pm \bolde_k$,
    which are in the fiber over $\boldzero$ with respect to either projection,
    and thus $\pm \bolde_k \not\in \conv \{ \boldx_1,\dots,\boldx_n \}$.
    Therefore, $C$ contain at least $n+2$ points,
    and none of them is a interior point.
    This contradicts with the assumption that $C$ is simplicial.
    Therefore, we can conclude that if $C$ is simplicial,
    then $F_1$ and $F_2$ must also be simplicial.
\end{proof}

\subsection{Edge contraction in a single graph}

We now apply the results from Section \ref{subsection: Two subgraphs} to the edge contraction in a single graph $G$. 
Namely, we view $G$ as a union of two graphs, $G_1 = G$ and 
$G_2 = ( \{k_1,k_2\}, \{ \{ k_1, k_2 \} \})$. 
Then Theorems \ref{thrm: 1-1 correspondence} and \ref{thrm: simplical cell volume} 
imply the following corollaries.

\begin{corollary}
    Let $e = \{ k_1, k_2 \}$ be an edge of a connected graph $G$, 
    and let $G' = G \sslash e$.
    Then the cells in the edge contraction subdivision $\subdiv_{k_1,k_2}$ of $\adjp_G$
    induced by the contraction of $G$ along the edge $e$
    are in one-to-one correspondence
    with the facets of $\adjp_{G'}$.
\end{corollary}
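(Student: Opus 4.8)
The plan is to apply Theorem~\ref{thrm: 1-1 correspondence} directly, using the degenerate decomposition of $G$ described in the preceding paragraph, in which $G_1 = G$ and $G_2$ is the graph consisting of the single edge $e$ together with its two endpoints. First I would verify that this pair genuinely meets the hypothesis of the theorem. Writing $G_2 = (\{k_1,k_2\}, \{e\})$, one has $\nodes(G_1) \cup \nodes(G_2) = \nodes(G)$ and $\edges(G_1) \cup \edges(G_2) = \edges(G)$, because $k_1, k_2 \in \nodes(G)$ and $e \in \edges(G)$, while $\nodes(G_1) \cap \nodes(G_2) = \{k_1,k_2\}$ and $\edges(G_1) \cap \edges(G_2) = \{e\}$. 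Thus $G$ is exhibited as two subgraphs sharing exactly the single edge $e$, and the theorem applies.

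Next I would identify the two contracted graphs. Since $G_1 = G$, we have $G_1' = G_1 \sslash e = G'$. On the other hand, $G_2 = (\{k_1,k_2\},\{e\})$ has only the two vertices $k_1, k_2$ and the one edge between them, so $G_2' = G_2 \sslash e$ is the graph on a single merged vertex with no edges. This is exactly the trivial case from \Cref{sec:prelim}: its adjacency polytope is the single point $\{\boldzero\}$, whose only facet is $\varnothing$ by convention. Hence $\facets(\adjp_{G_2'}) = \{\varnothing\}$ is a singleton.

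Finally, I would invoke Theorem~\ref{thrm: 1-1 correspondence} to obtain a bijection between the cells of $\subdiv_{k_1,k_2}$ and the set $\facets(\adjp_{G_1'}) \times \facets(\adjp_{G_2'}) = \facets(\adjp_{G'}) \times \{\varnothing\}$. Because the second factor is a singleton, the canonical projection onto the first factor is itself a bijection onto $\facets(\adjp_{G'})$, and composing the two gives precisely the claimed one-to-one correspondence between cells of $\subdiv_{k_1,k_2}$ and facets of $\adjp_{G'}$.

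I do not anticipate a genuine obstacle, and indeed the argument is short; the only point demanding care is the degenerate second factor. One should confirm that the conventions of \Cref{sec:prelim} (a single-vertex graph has adjacency polytope $\{\boldzero\}$ with unique facet $\varnothing$, a ``facet'' of dimension $-1$) are consistent with the dimension bookkeeping inside the proof of Theorem~\ref{thrm: 1-1 correspondence} when $G_2'$ is trivial. Concretely, one checks that the degenerate index $k = n$, the empty list of defining points $\boldb_1,\dots,\boldb_{m_2'}$ with $m_2' = 0$, and the vacuous edge-inequalities over $\edges(G_2') = \varnothing$ all cause no trouble, so that $\boldgamma = (\boldalpha, 0)$ is determined entirely by the facet of $\adjp_{G'}$. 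Once this consistency is noted, the reduction is immediate.
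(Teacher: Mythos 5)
Your proposal is correct and takes essentially the same route as the paper: the paper likewise views $G$ as the union of $G_1 = G$ and the single-edge graph $G_2 = (\{k_1,k_2\},\{\{k_1,k_2\}\})$ and then invokes Theorem~\ref{thrm: 1-1 correspondence}, with the degenerate factor $\facets(\adjp_{G_2'}) = \{\varnothing\}$ handled by the convention of \Cref{sec:prelim}. Your extra check that the trivial second factor causes no dimension bookkeeping issues is a sensible addition, but it matches the paper's intent exactly.
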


\begin{corollary} \label{cor: C simplex -> F simplex}
    Let $e = \{ k_1, k_2 \}$ be an edge of a connected graph $G$. Let $G' = G \sslash e$.
    Suppose $C$ is a cell in the edge contraction subdivision $\subdiv_{k_1,k_2}$ 
    with $q_{k_1,k_2}(C) = (F, \varnothing)$ for some facet $F$ of $\adjp_{G'}$.
    If $C$ is simplicial, then $F$ is simplicial. 
\end{corollary}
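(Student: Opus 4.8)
The plan is to derive this corollary directly from Theorems \ref{thrm: 1-1 correspondence} and \ref{thrm: simplical cell volume} via the degenerate decomposition described at the opening of this subsection, in which $G$ is regarded as the union of $G_1 = G$ and the single-edge graph $G_2 = (\{k_1,k_2\}, \{\{k_1,k_2\}\})$. First I would verify that this pair $(G_1,G_2)$ satisfies the hypotheses of Theorem \ref{thrm: 1-1 correspondence}: taking $V_1 = \nodes(G)$, $E_1 = \edges(G)$, $V_2 = \{k_1,k_2\}$, and $E_2 = \{e\}$, one checks at once that $\nodes(G) = V_1 \cup V_2$, $\edges(G) = E_1 \cup E_2$, $V_1 \cap V_2 = \{k_1,k_2\}$, and $E_1 \cap E_2 = \{e\}$, while $G$ of course remains connected. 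Hence both theorems apply to the subdivision $\subdiv_{k_1,k_2}$ under exactly this decomposition.

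Next I would identify the two contracted graphs. Here $G_1' = G_1 \sslash e = G \sslash e = G'$, whereas $G_2' = G_2 \sslash e$ is obtained by merging $k_1$ and $k_2$, leaving a single isolated vertex. In this trivial case ($n = 0$), the convention adopted in \Cref{sec:prelim} gives $\adjp_{G_2'} = \{\boldzero\}$ with its unique facet equal to $\varnothing$, so that $\facets(\adjp_{G_2'}) = \{\varnothing\}$. Consequently the bijection $q_{k_1,k_2}$ of Theorem \ref{thrm: 1-1 correspondence} sends each cell $C$ to a pair of the form $(F,\varnothing)$, which is precisely the hypothesis $q_{k_1,k_2}(C) = (F,\varnothing)$ with $F$ a facet of $\adjp_{G'}$.

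Finally, I would invoke Theorem \ref{thrm: simplical cell volume} for this decomposition: if $C$ is simplicial, then both components of $q_{k_1,k_2}(C)$ are simplicial. The second component is $\varnothing$, the facet of the trivial polytope $\{\boldzero\}$; as a $(-1)$-dimensional face with no vertices it is vacuously simplicial, so nothing further is needed there. The first component is $F$, and we conclude that $F$ is simplicial, as claimed.

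The closest thing to an obstacle is purely bookkeeping around the degenerate factor $G_2'$. One must confirm that the dimension count of Theorem \ref{thrm: simplical cell volume} specializes consistently when $k = n$: the dimension of the second facet becomes $n - k - 1 = -1$, its vertex count is $0$, and the vertex bound $(k-2+1) + (n-k-1+1) + 1 = n$ collapses to $(k-1) + 0 + 1 = n$, so the contradiction argument (forcing $C$ to contain at least $n+2$ points) survives intact. Verifying that the earlier theorems' conclusions degrade gracefully to this boundary case, rather than breaking down, is the one point demanding care; once it is checked, the corollary follows immediately.
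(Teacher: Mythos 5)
Your proposal is correct and follows exactly the paper's own route: the paper derives this corollary by viewing $G$ as the union of $G_1 = G$ and the single-edge graph $G_2 = (\{k_1,k_2\}, \{\{k_1,k_2\}\})$, and then applying Theorems \ref{thrm: 1-1 correspondence} and \ref{thrm: simplical cell volume}, with the convention that the trivial polytope $\adjp_{G_2'} = \{\boldzero\}$ has $\varnothing$ as its unique facet. Your extra bookkeeping for the degenerate factor (the $k = n$ dimension counts) is more care than the paper itself exhibits, but it is the same argument.
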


\section{Properties of cells and cell subgraphs}\label{sec:properties}

Throughout this section, we fix $G$ to be a connected graph,
and let $\subdiv_{k_1,k_2}$ be the edge contraction subdivision
induced by the edge contraction of $G$ along the edge $\{ k_1, k_2 \} \in \edges(G)$.
Since the polytope $\adjp_G$ is derived from the graph $G$,
cells and hence the subsets of cells in the subdivision $\subdiv_{k_1,k_2}$
are naturally associated with the subgraphs of $G$.
This connection provides a great insight into the combinatorial structure
of the subdivision $\subdiv_{k_1,k_2}$,
which is the main focus of this section.

\begin{definition} \label{def: cell subgraphs}
    For a nonempty subset $X$ of $\{ \bolde_i - \bolde_j \mid \{i,j\} \in \edges(G) \}$,
    we define the corresponding directed and undirected subgraphs 
    $\digraph{G}_X$ and $G_X$ to be the graphs with the edge sets 
    \begin{align*}
        \edges(\digraph{G}_X) &= \{ \; (i,j) \;\mid\; \bolde_i - \bolde_j \in X \}, \;\text{and} \\
        \edges(G_X)           &= \{ \; 
            \{ i, j \} 
            \;\mid\; \bolde_i - \bolde_j \in X \text{ or } \bolde_j - \bolde_i \in X 
        \},
    \end{align*}
    respectively.
    In addition, if $C$ is a cell in $\subdiv_{k_1,k_2}$,
    $\digraph{G}_C$ and $G_C$ are called 
    \term{directed cell subgraph} and \term{undirected cell subgraph}, respectively.
\end{definition}



\begin{wrapfigure}[20]{r}{0.33\textwidth}
    \centering
    \begin{subfigure}[b]{0.33\textwidth}
        \centering
        \begin{tikzpicture}[
            every node/.style={circle,thick,draw,inner sep=2.5},
            every edge/.style={draw,thick}]
            \node (0) at ( 0,1) {0};
            \node (1) at (-1,1) {1};
            \node (2) at (-1,0) {2};
            \node (3) at ( 0,0) {3};
            \node (4) at (0.85,-.2) {4};
            \node (6) at (0.85,1.2) {6};
            \node (5) at (1.5,0.5) {5};
            \path [->] (1) edge (2);
            \path [->] (0) edge (3);
            \path [->] (3) edge (0);
            \path [->] (2) edge (0);
            \path [->] (2) edge (3);
            \path [->] (6) edge (0);
            \path [->] (4) edge (3);
            \path [->] (5) edge (3);
            \path [->] (5) edge (0);
        \end{tikzpicture}
        \caption{
            Directed cell subgraph associated with a cell
            in the edge contraction subdivision 
            for the graph in \Cref{fig:G}.
        }
        \label{fig: running: directed cell subgraph}
    \end{subfigure}
    \vspace{0.5ex}
    
    \begin{subfigure}[b]{0.33\textwidth}
        \centering
        \begin{tikzpicture}[
            every node/.style={circle,thick,draw,inner sep=2.5},
            every edge/.style={draw,thick}]
            \node (0) at ( 0,1) {0};
            \node (1) at (-1,1) {1};
            \node (2) at (-1,0) {2};
            \node (3) at ( 0,0) {3};
            \node (4) at (0.85,-.2) {4};
            \node (6) at (0.85,1.2) {6};
            \node (5) at (1.5,0.5) {5};
            \path  (1) edge (2);
            \path  (0) edge [color=red,style={double}] (3);
            \path  (2) edge (0);
            \path  (2) edge (3);
            \path  (6) edge (0);
            \path  (4) edge (3);
            \path  (5) edge (3);
            \path (5) edge (0);
        \end{tikzpicture}
        \caption{The undirected cell subgraph corresponding to the above directed cell subgraph.}
        \label{fig: running: undirected cell subgraph}
    \end{subfigure}
    \caption{Cell subgraphs}
\end{wrapfigure}
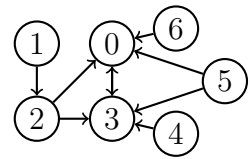
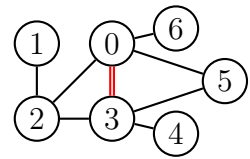
    
\Cref{fig: running: directed cell subgraph} shows an example of directed cell subgraph
associated with a cell in the edge contraction subdivision $\subdiv_{0,3}$
for the graph shown in \Cref{fig:G} induced by the contraction 
$G \mapsto G \sslash \{ \{0,3\} \}$.
\Cref{fig: running: undirected cell subgraph} shows the corresponding
undirected cell subgraph.
In the following, we explore the properties of such cell subgraphs.


Cycle spaces \cite{Gross2005GraphTheory} associated with undirected cell subgraphs 
play an important role in our understanding of the combinatorial structure of cells. 
Recall that the cycle space of a graph $G$ is a vector space 
spanned by all cycles in $G$ over the field $\Z_2$.
The dimension of this space, known as \emph{cyclomatic number}
(a.k.a. circuit rank), 
can be computed as the minimum number of edges one needs to remove to produce a spanning forest. 
With respect to a spanning tree $T$ of $G$, 
an edge not contained in $T$ is associated with a cycle,
the unique cycle formed by adding this edge to $T$.
This cycle is known as the \emph{fundamental cycle} associated with this edge.
A distinguished class of cycles, which we shall call ``balanced cycles'', 
will be the key concept in our discussion in this section.

\begin{definition}\label{balanced cycle}
    A cycle $O$ in an undirected graph $G$ is said to be \term{balanced}
    with respect to the edge contraction $G \mapsto G \sslash \{k_1,k_2\}$
    if $\edges(O) \setminus \{ \{ k_1,k_2 \} \}$ contains an even number edges.
    A subgraph of $G$ is \emph{balanced} 
    with respect to a edge contraction
    if all cycle in this subgraph are balanced
    with respect to this edge contraction.
    The \term{balanced circuit rank} of a $G$ is the maximum
    cyclomatic number (circuit rank) of its balanced subgraphs
    with respect to this edge contraction.
\end{definition}

Equivalently, a cycle $O$ of $G$ is balanced
with respect to the edge contraction $G \mapsto G \sslash \{k_1,k_2\}$
if and only if $O' = O \sslash \{ k_1, k_2 \}$ is a bipartite graph.
In this case, $O'$ will be a balanced bipartite graph.
When the edge contraction in question is clear from context,
we may simply say a cycle or subgraph is balanced
without explicit reference of the edge contraction.



\begin{theorem} \label{trm: properties of cell subgraphs}
    Let $C$ be a cell in the edge contraction subdivision $\subdiv_{k_1,k_2}$,
    then the corresponding cell subgraphs ${G}_C$ and $\digraph{G}_C$ have the following properties.
    \begin{enumerate} [label=(\roman*)] 
        \item \label{trm part: exactly one cycle} 
            $\digraph{G}_C$ contains exactly one directed cycle, 
            which is $k_1 \leftrightarrow k_2$.
            
        \item \label{trm part: C all nodes} 
            $\nodes({G}_C)=\nodes(\digraph{G}_C)=\nodes(G)$.
            
        \item
            $\edges(\digraph{G}_C)$ is closed under the map 
            \[
                \psi( (i,j) ) =
                \begin{cases}
                    (\sigma(i),\sigma(j)) &\text{if } (\sigma(i),\sigma(j)) \in \edges(G) \\
                    (i,j)   &\text{otherwise},
                \end{cases}
            \]
            where $\sigma$ is the transposition of the vertices given by 
            $k_1 \mapsto k_2$ and $k_2 \mapsto k_1$.
    
        \item \label{trm part: balanced cycle}  
            $G_C$ is balanced.
        \item \label{trm part: flip} 
            $G_C$ has a cycle basis with at most one odd cycle.
    \end{enumerate}
\end{theorem}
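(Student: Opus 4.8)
The plan is to repackage the entire facet-defining data of a cell into a single \emph{node potential} and then read off all five statements as short telescoping or parity computations. Using \Cref{rmk:refnode}, assume the contracted edge is $\{0,k\}$; let $\hat{\boldalpha} = (\boldalpha,1)$ be the upward-pointing inner normal of the lower facet $\hat{C}$ corresponding to $C$, and set $p_0 = 0$ and $p_m = \alpha_m$ for $m \ge 1$, so that $\inner{\bolde_i - \bolde_j}{\boldalpha} = p_i - p_j$. Then \Cref{Cor: h=0}, together with the defining conditions of the lower facet, gives $p_0 = p_k = 0$, gives $p_i - p_j = -1$ for every non-special edge-vector $\bolde_i - \bolde_j \in C$, and gives $p_i - p_j \ge -1$ for every edge of $G$. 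I would record two consequences up front: (a) a non-special edge has at most one orientation in $C$ (both would force $-1 = p_i - p_j = -(p_j - p_i)$), so every non-special edge of $G_C$ satisfies $p_i - p_j = \pm 1$; and (b) because $p_0 = p_k$, the potential is invariant under the transposition $\sigma = (0\,k)$, i.e.\ $p_{\sigma(m)} = p_m$ for all $m$.

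For (i), the edge-vectors of any directed cycle lie in $C$, so each satisfies $p_{i_\ell} - p_{i_{\ell+1}} \in \{-1,0\}$; the telescoping sum around the cycle is $0$, forcing every term to vanish, so every edge is a $0$--$k$ edge and the cycle can only be the $2$-cycle $k_1 \leftrightarrow k_2$, which is present because $\pm\bolde_k \in C$ by \Cref{lem:special-edge}. For (ii), $\pm\bolde_k \in C$ already covers nodes $0$ and $k$; were any other node $v$ missing, every point of $C$ would have zero $v$-th coordinate, contradicting $\dim\conv(C) = n$, and the directed and undirected node sets agree since they share endpoints. For (iii), invariance (b) gives $p_{\sigma(i)} - p_{\sigma(j)} = p_i - p_j$, so if $(i,j) \in \edges(\digraph{G}_C)$ and $(\sigma(i),\sigma(j)) \in \edges(G)$, then $\bolde_{\sigma(i)} - \bolde_{\sigma(j)}$ is a non-special edge-vector with inner product $-1$, hence lies in $C$; the complementary branch of $\psi$ is the identity and is immediate.

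Part (iv) is again a telescoping: around any cycle $O$ of $G_C$ the $\{0,k\}$-edge contributes $0$ (since $p_0 = p_k$) and each non-special edge contributes $\pm 1$ by (a), while the total is $0$, so the number of non-special edges is even, which is precisely the balanced condition (\Cref{balanced cycle}). For (v) I would first upgrade (iv): ``number of non-special edges mod $2$'' is a $\Z_2$-linear functional vanishing on every cycle, hence on the whole cycle space, so an element $Z$ of the cycle space is odd if and only if it contains the single special edge $\{0,k\}$. I would then exhibit a fundamental cycle basis with at most one odd cycle: if $\{0,k\}$ lies on a cycle it is not a bridge, so a spanning forest can be chosen avoiding it, making the fundamental cycle closed by $\{0,k\}$ the unique odd basis element, with all others even; if $\{0,k\}$ is a bridge, no fundamental cycle contains it and all are even.

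The conceptual crux is the first paragraph: recognizing that \Cref{Cor: h=0} collapses the facet inequalities into one potential $p$ with $p_0 = p_k = 0$, after which (i), (iii), and (iv) are each essentially a single line. The step most likely to need care is (v), where one must pass from ``every cycle is balanced'' to a statement about a \emph{basis} of the cycle space; the clean route is to observe that balancedness is a $\Z_2$-linear condition (so it extends from cycles to all of the cycle space) and then to force the special edge to be a non-tree edge whenever it lies on a cycle. I would verify that the disconnected case and the bridge case of $G_C$ are both handled automatically by the fundamental-cycle-basis construction, since that is where an informal argument could silently go wrong.
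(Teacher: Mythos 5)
Your proposal is correct, and its skeleton coincides with the paper's: both arguments rest on \Cref{lem:special-edge} and \Cref{Cor: h=0}, and both prove (i) and (iv) by summing the facet equalities around a cycle. The differences are in packaging and in part (v). Your node potential $p$ with $p_0 = p_k = 0$ turns the paper's orientation bookkeeping --- the signs $\lambda_r$ and the two-case split in (iv) according to whether the contracted edge lies on the cycle --- into uniform telescoping identities, which is a genuine streamlining. More substantively, your part (v) is more complete than the paper's: the paper simply takes ``a spanning tree of $G_C$ that does not contain the edge $\{k_1,k_2\}$,'' which does not exist when that edge is a bridge of $G_C$ (this happens, e.g., when $G$ is a tree, since then $G_C = G$ and every edge is a bridge); your explicit bridge case closes this small gap, and the observation that balancedness is a $\Z_2$-linear functional on the edge space cleanly justifies passing from ``every cycle is balanced'' to a statement about every element of the cycle space, hence about any fundamental cycle basis. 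One pedantic remark: in (iii), your claim that $\bolde_{\sigma(i)} - \bolde_{\sigma(j)}$ is a \emph{non-special} edge-vector with inner product $-1$ should exclude the case where $(i,j)$ is itself one of the two special directed edges; that case is immediate from \Cref{lem:special-edge}, so nothing is lost.
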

\begin{proof}
    Let $\hat F$ be a facet of the lifted polytope $\hat{\adjp}_G$ corresponding to a cell $C$,
    and let $\hat{\boldalpha} = (\boldalpha, 1) = (\alpha_1,\dots,\alpha_n,1)$ be an inner normal vector of $\hat F$.
    
    \begin{enumerate}[label=(\roman*),wide, labelwidth=!, itemindent=0em]
        \setlength{\itemsep}{1ex}%
    
        \item 
            By~\Cref{lem:special-edge}, $\digraph{G}_C$ contains a cycle $k_1 \leftrightarrow k_2$. 
            Suppose $\digraph{G}_C$ contains different directed cycle 
            $i_1 \leftrightarrow i_2 \leftrightarrow \cdots i_{m} \leftrightarrow i_{m+1}$
            with $i_{m+1} = i_1$, then by \Cref{Cor: h=0}, 
            \[ 
                0 = \biginner{ 
                    \left(\bolde_{i_r}-\bolde_{i_{r+1}}, 
                    \omega_{k_1,k_2}(\bolde_{i_r} - \bolde_{i_{r+1}})\right) 
                }{ 
                    \hat{\boldalpha}
                }, 
                \quad
                \text{for each } r=1,2,\dots,m.
            \]
            Summing up these $m$ equations, we obtain 
            \[
                0 = \biginner{
                    \left( 
                        \boldzero, 
                        \sum_{r=1}^m \omega_{k_1,k_2}(\bolde_{i_r}-\bolde_{i_{r+1}})
                    \right) 
                }{ 
                    \hat{\boldalpha} 
                }
                =
                \sum_{r=1}^m \omega_{k_1,k_2}(\bolde_{i_r}-\bolde_{i_{r+1}}). 
            \]
            This is a contradiction, since the sum of weights 
            $\sum_{r=1}^m \omega_{k_1,k_2}(\bolde_{i_r}-\bolde_{i_{r+1}})$ is strictly positive.
            Therefore, there can be no directed cycle other than $k_1 \leftrightarrow k_2$.
            
        \item   
            It follows from the \Cref{def: cell subgraphs} that $\nodes({G}_C) = \nodes(\digraph{G}_C)$ and $\nodes(\digraph{G}_C) \subseteq \nodes(G)$. 
            Suppose $m \in \nodes(G)$ and $m \not \in \nodes(\digraph{G}_C)$. 
            Then $\hat{F}$ has no vertex of the form $(\pm(\bolde_i - \bolde_m),\omega_{k_1,k_2}(\bolde_{i}-\bolde_{m}) )$ for any $i$, 
            Consequently, all points in $C$ have $0$ as $m$-th coordinate. 
            $C$ is therefore contained in the coordinate subspace orthogonal to $\bolde_m$
            and hence at most $(n-1)$-dimensional. 
            This contradicts with the assumption that $C$ is full dimensional. 
 
        \item   
            Suppose there is a vertex $i$ such that $\{i,k_1\}$ and $\{i,k_2 \}$ are both in $\edges(G)$. 
            Without loss of generality, we assume that $\digraph{G}_C$ contains a directed edge $( i, k_1 )$. 
            Then $(\bolde_{i}-\bolde_{k_1}) \in C$ and 
            \begin{align*}
                0 
                &= \biginner{ \left(\bolde_{i}-\bolde_{k_1}, \omega_{k_1,k_2}(\bolde_{i}-\bolde_{k_1})\right) }{\hat\boldalpha}\\
                &= \biginner{\bolde_{i}}{\boldalpha}-\alpha_{k_1}+\omega_{k_1,k_2}(\bolde_{i}-\bolde_{k_1}) \\
                &= \biginner{\bolde_{i}}{\boldalpha}-\alpha_{k_2}+\omega_{k_1,k_2}(\bolde_{i}-\bolde_{k_2}) \\
                &= \biginner{ \left(\bolde_{i}-\bolde_{k_2}, \omega_{k_1,k_2}(\bolde_{i}-\bolde_{k_2})\right) }{\hat\boldalpha},
            \end{align*}
            since $\alpha_{k_1}=\alpha_{k_2}$ (\Cref{Cor: h=0}).
            Hence, $(\bolde_{i}-\bolde_{k_2}) \in C$ and $\digraph{G}_C$ also contains the directed edge $( i, k_2 )$.
            
        \item   
            Suppose $G_C$ contains a cycle of length $m$
            containing edges $i_1 \leftrightarrow i_2 \leftrightarrow \cdots \leftrightarrow i_{m+1}$
            with $i_{m+1} = i_1$.
            Let $X$ be the largest subset of $C$ for which $G_{X}$ is this cycle,
            i.e., $X$ either contain both $\pm (\bolde_{k_1} - \bolde_{k_2})$ or neither of them.
            
            We first consider the case when $\pm (\bolde_{k_1} - \bolde_{k_2}) \not\in X$.
            In this case, 
            \begin{equation} \label{eq: X even cycle}
               X = \left\{ 
                    \lambda_r (\bolde_{i_r} - \bolde_{i_{r+1}}) \mid  
                    r=1, \dots, m, \; \bolde_{i_1}=\bolde_{i_{m+1}}, \; 
                    \lambda_r \in \{-1, 1\} 
                \right\},
            \end{equation}
            where $\lambda_1,\dots,\lambda_m \in \{ \pm 1 \}$ 
            indicate the directions of the edges in $\digraph{G}_{X}$
            (with $\lambda_r = +1$ if $(i_r,i_{r+1}) \in \edges(\digraph{G}_X)$ 
             and  $\lambda_r = -1$ if $(i_{r+1},i_r) \in \edges(\digraph{G}_X)$).
            \Cref{Cor: h=0} implies that 
            \begin{equation} \label{eq:1: m is even} 
                - \biginner{\bolde_{i_r} - \bolde_{i_{r+1}}}{\boldalpha} = 
                \lambda_r \quad\text{for } r=1, \dots, m.
            \end{equation}
            Summing up these $m$ equation, we obtain 
            \begin{equation}\label{eq:2: m is even}
                0=\sum_{r=1}^m \lambda_r.
            \end{equation}
            This implies that $m$ is even, and hence, $G_X$ is a balanced cycle.
            
            We shall now consider the case when $\pm (\bolde_{k_1} - \bolde_{k_2}) \in X$.
            Without loss of generality, we let $i_{1} = k_1$ and $i_{m} = k_2$.
            Recall that by \Cref{lem:special-edge}, a cell contains both $\pm (\bolde_{k_1}-\bolde_{k_2})$. 
            Thus
            \begin{align} \label{eq: X is odd cycle}
                X =
                \left\{ 
                    \lambda_r (\bolde_{i_r} - \bolde_{i_{r+1}}) \mid  r=1, \dots, m-1, 
                \right\} 
                \cup \left\{ \pm (\bolde_{k_1}-\bolde_{k_2}) \right\},
            \end{align} 
            where $\lambda_1,\dots,\lambda_m \in \{ \pm 1 \}$, again,
            indicate the directions of the edges in $\digraph{G}_{X}$.
            As in the previous case, by \Cref{Cor: h=0},   
            \begin{align} \label{eq:3: m is even}
                - \biginner{\bolde_{i_r}-\bolde_{i_{r+1}} }{ \boldalpha } &= \lambda_r, \quad\text{for } r=1, \dots, m-1,\\
                \biginner{\bolde_{i_{1}}-\bolde_{i_{m-1}} }{ \boldalpha } &= \biginner{\bolde_{k_1}-\bolde_{k_2} }{ \boldalpha }= 0. \nonumber
            \end{align}
            Summing up these $m$ equations, we obtain
            \begin{equation} \label{eq:4: m is even}
            0 =  \sum_{k=r}^{m-1} \lambda_r,
            \end{equation}
            which implies that $m-1$ must be even
            and hence $m$ itself must be odd.
            Recall that in this case, it is assumed that $k_1 \leftrightarrow k_2$ is in $G_X$, therefore $G_X$ is balanced.
            
        \item 
            We construct a suitable fundamental cycle basis of $G_C$ by 
            taking a spanning tree of $G_C$ that does not contain the edge $\{k_1,k_2\}$. 
            Then using part \ref{trm part: balanced cycle}, 
            we can see that such basis will contain at most one odd cycle,
            which is the fundamental cycle associated with the edge $\{k_1,k_2\}$.
    \end{enumerate}
\end{proof}

\begin{remark} \label{remark: balance}
    The above proof explains the motivation behind the definition of balanced cycles.
    If $X$ is the largest subset of a cell $C \in \subdiv_{k_1,k_2}$
    for which $G_X$ represents a balanced cycle,
    then $|\edges(\digraph{G}_X)|$ is even and 
    equations \eqref{eq:1: m is even} and \eqref{eq:4: m is even} imply that
    $\edges(\digraph{G}_X)$ consists of two disjoint subsets of equal size
    having opposite orientations in the cycle. The directed graph $\digraph{G}_X$ is ``balanced'' in this sense. The undirected graph $G_X$ is balanced in the sense that
    it is derived from a ``balanced'' directed graph $\digraph{G}_X$.
\end{remark}

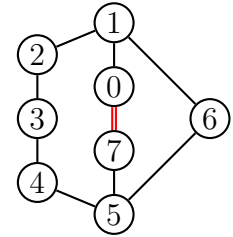
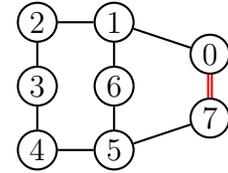
\begin{wrapfigure}{r}{0.3\textwidth}
    \centering
    \begin{subfigure}[t]{0.3\textwidth}
        \centering
        \begin{tikzpicture}[scale=0.85,
                every node/.style={circle,thick,draw,inner sep=1.8},
                every edge/.style={draw,thick}]
        \node (0) at (1.2, 0.5) {0};
        \node (1) at (1.2, 1.5) {1};
        \node (2) at (0, 1) {2};
        \node (3) at (0, 0) {3};
        \node (4) at (0, -1) {4};
        \node (5) at (1.2, -1.5) {5};
        \node (6) at (2.7, 0) {6};
        \node (7) at (1.2, -0.5) {7};
        \path (0) edge[color=red,style={double}] (7);
        \path (0) edge (1); 
        \path (1) edge (6);
        \path (6) edge (5); 
        \path (5) edge (7); 
        \path (1) edge (2); 
        \path (2) edge (3); 
        \path (3) edge (4); 
        \path (4) edge (5); 
        \end{tikzpicture}
        \caption{A graph $G$ created from gluing two odd cycles along three common edges.}
        \label{fig: cycles before flip}  
    \end{subfigure}
    \vspace{1ex}
    
    \begin{subfigure}[t]{0.3\textwidth}  
        \centering
        \begin{tikzpicture}[scale=0.85,
                every node/.style={circle,thick,draw,inner sep=1.8},
                every edge/.style={draw,thick}]
        \node (0) at (2.7, 0.5) {0};
        \node (1) at (1.2, 1) {1};
        \node (2) at (0, 1) {2};
        \node (3) at (0, 0) {3};
        \node (4) at (0, -1) {4};
        \node (5) at (1.2, -1) {5};
        \node (6) at (1.2, 0) {6};
        \node (7) at (2.7, -0.5) {7};
        \path (0) edge[color=red,style={double}] (7);
        \path (0) edge (1); 
        \path (1) edge (6);
        \path (6) edge (5); 
        \path (5) edge (7); 
        \path (1) edge (2); 
        \path (2) edge (3); 
        \path (3) edge (4); 
        \path (4) edge (5); 
        \end{tikzpicture}
    \caption{
        A different embedding of $G$
        shows it can also be considered as the result
        of gluing an even and an odd cycle along two edges.
    }
    \label{fig: cycles after flip}  
    \end{subfigure}
\caption{Sizes of cycles in cycle basis.}
\end{wrapfigure}
In general, the sizes of the chordless cycles in a cycle basis of a graph
are not uniquely defined.
\Cref{trm: properties of cell subgraphs} part \ref{trm part: flip}
will be used frequently in the following discussion to eliminate certain
ambiguities in cycle sizes.
It states that for an undirected cell subgraph,
there is always a choice of cycle basis
that contains at most one odd cycle and the rest are even cycles.
For example, the graph in \Cref{fig: cycles before flip} appears to be 
formed by gluing two odd cycles 
(a 5-cycle and a 7-cycle) along 3 common edges.
With a different embedding of the same graph in the plane,
it becomes apparent that it can also be formed by
gluing an even and an odd cycle along 2 common edges.
That is, it has a cycle basis that contains at most one odd cycle.
\Cref{trm: properties of cell subgraphs} part \ref{trm part: flip}
ensures that this is always possible for undirected cell subgraphs.

We now turn our attention to the combinatorial properties of cells and their subsets.
Recall that for a point configuration $X = \{ \boldx_1, \dots, \boldx_m \}$, 
its \emph{dimension}, denoted $\dim X$, 
is the dimension of smallest affine space that contains it.
It is said to be \emph{affinely dependent} if there are real coefficients
$\lambda_1,\dots,\lambda_m$ with $\lambda_1 + \cdots + \lambda_m = 0$
such that $\sum_{i=1}^m \lambda_i \boldx_i = \boldzero$.
Otherwise, it is \emph{affinely independent}.
$X$ is \emph{simplicial} if $|X| = \dim X + 1$,
and $X$ is a \emph{circuit} if it is affinely dependent
yet any of its proper subset is affinely independent.
The \emph{corank} of $X$ is the number $|X| - \dim X - 1$.
If $X$ has corank one, the coefficients $\lambda_1,\dots,\lambda_m$ in its affine dependence relation
 $\sum_{i=1}^m \lambda_i \boldx_i = \boldzero$
have a well defined sign pattern. That is, 
$\sigma^+ = |\{\lambda_i \mid \lambda_i > 0\}|$, 
$\sigma^- = |\{\lambda_i \mid \lambda_i < 0\}|$, and
$\sigma^0 = |\{\lambda_i \mid \lambda_i = 0\}|$
are uniquely defined, up to a permutation of $\sigma^+$ and $\sigma^-$,
and the tuple $(\sigma^+, \sigma^-, \sigma_0)$ is the \emph{signature} of $X$.

In the following, we establish the connections between the combinatorial properties
of a cell or a subset of a cell and the graph-theoretic properties of the corresponding subgraphs. 
This list of properties is reminiscent of a morphism between matroids.
This interpretation will be explored in \Cref{sec:matroid}.

\begin{theorem}\label{thm:matroid-morphism}
    Let $X$ be a nonempty subset of a cell $C \in \subdiv_{k_1,k_2}$ such that 
    $X$ either contains both $\pm (\bolde_{k_1} - \bolde_{k_2})$ or none of them, 
    then
    \begin{enumerate} [label=(\roman*)] 
        \item \label{trm part: tree - aff.indep}
            $X$ is affinely independent if and only if $G_X$ is a forest.
        
            
        \item \label{trm part: simple cycle - circuit}
            $X$ is a circuit if and only if $G_X$ is a chordless cycle.
            
        \item
            $\dim(X)=|\nodes(G_{X})| + |\edges(G_X) \cap \{ k_1, k_2 \}| - m - 1$
            where $m$ is the number of connected components in $G_X$.
            
        \item \label{trm part: corank=cycl.number}
            $\corank(X)$ equals the cyclomatic number of $G_X$.
    \end{enumerate}
\end{theorem}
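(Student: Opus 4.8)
My plan is to prove the four parts out of their listed order, establishing the dimension formula (iii) first and deducing (iv), (i), and (ii) from it. By \Cref{rmk:refnode} I assume $\{k_1,k_2\}=\{0,k\}$, so that the two ``special'' points are $\pm\bolde_k$ and the hypothesis on $X$ reads: $X$ contains both $\pm\bolde_k$ or neither. I first record two elementary facts. (a) Since $\mathrm{span}(X)$ depends only on the underlying undirected edges of $G_X$, the standard rank computation for the signed incidence vectors $\bolde_i-\bolde_j$ (carried out componentwise, using $\bolde_0=\boldzero$) gives $\dim\mathrm{span}(X)=|\nodes(G_X)|-m$. (b) One has $\dim\mathrm{aff}(X)=\dim\mathrm{span}(X)$ when $\boldzero\in\mathrm{aff}(X)$, and $\dim\mathrm{span}(X)-1$ otherwise.

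For (iii) the only real work is to decide whether $\boldzero\in\mathrm{aff}(X)$. If $\pm\bolde_k\in X$, then $\tfrac12\bolde_k+\tfrac12(-\bolde_k)=\boldzero$ realizes $\boldzero$ as an affine combination, so $\boldzero\in\mathrm{aff}(X)$. If neither lies in $X$, then \Cref{Cor: h=0} gives $\inner{\boldx}{\boldalpha}=-1$ for every $\boldx\in X$, where $\boldalpha$ is the truncation of the lower facet's inner normal; pairing any affine combination $\sum_i\mu_i\boldx_i$ with $\sum_i\mu_i=1$ against $\boldalpha$ yields $-1\neq0$, so $\boldzero\notin\mathrm{aff}(X)$. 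Since $\pm\bolde_k\in X$ is exactly the condition $|\edges(G_X)\cap\{k_1,k_2\}|=1$, combining (a) and (b) produces the stated formula.

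Part (iv) is then bookkeeping. Because no non-special edge can appear in $C$ with both orientations---otherwise part \ref{trm part: exactly one cycle} of \Cref{trm: properties of cell subgraphs} would supply a second directed cycle---only the special edge contributes two points of $X$ to a single undirected edge, so $|X|=|\edges(G_X)|+|\edges(G_X)\cap\{k_1,k_2\}|$. Substituting this and (iii) into $\corank(X)=|X|-\dim(X)-1$ cancels the $|\edges(G_X)\cap\{k_1,k_2\}|$ terms and leaves $|\edges(G_X)|-|\nodes(G_X)|+m$, the cyclomatic number of $G_X$. Part (i) is then immediate, since $X$ is affinely independent iff $\corank(X)=0$ iff this number vanishes iff $G_X$ is a forest.

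The substantive part is (ii). For ``$\Leftarrow$'', suppose $G_X$ is a chordless cycle; by (iv) $\corank(X)=1$, so $X$ has a unique affine dependence up to scaling, and it is a circuit exactly when deleting any single point leaves an affinely independent set. Deleting a non-special point preserves the hypothesis on $X$ and turns the cycle into a path, hence a forest, so (i) makes the remainder affinely independent. The remaining case---deleting one of $\pm\bolde_k$---is the main obstacle, as (i) no longer applies; here I would show instead that the coefficient of $\bolde_k$ in the affine dependence is nonzero: pairing the dependence against $\boldalpha$ (\Cref{Cor: h=0}) forces the coefficients of $\bolde_k$ and $-\bolde_k$ to be equal and opposite, and the unique expansion of $\bolde_k$ in the linearly independent path-edge vectors then forces them nonzero, so by uniqueness this dependence cannot be carried by $X\setminus\{\bolde_k\}$, which is therefore affinely independent. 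For ``$\Rightarrow$'', a circuit $X$ is affinely dependent, so by (iv) $G_X$ contains a cycle and hence a chordless cycle $O$; the point set $X_O\subseteq X$ corresponding to $O$ inherits the hypothesis on $X$ and is a circuit by ``$\Leftarrow$'', hence affinely dependent, and minimality of the circuit $X$ forces $X=X_O$ and $G_X=O$.
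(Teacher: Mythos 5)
Your proposal is correct, and it takes a genuinely different route from the paper's proof. The paper proves part~\ref{trm part: tree - aff.indep} first and derives the other three parts from it: the ``cycle implies dependent'' half is obtained by writing down explicit affine dependences for balanced cycles (\eqref{eq: cycle dependence 1} and \eqref{eq: cycle dependence 2}, whose coefficient patterns rely on the balance property of \Cref{trm: properties of cell subgraphs}~\ref{trm part: balanced cycle}), the ``forest implies independent'' half by a topological ordering and a triangular, unimodular matrix computation, and parts~\ref{trm part: simple cycle - circuit}--\ref{trm part: corank=cycl.number} then follow by passing to spanning forests. You run the logic backwards: part (iii) comes first, from the standard incidence-matrix rank formula $\dim\operatorname{span}(X)=|\nodes(G_X)|-m$ (which survives the identification $\bolde_0=\boldzero$ because $\bolde_0$ never lies in that span, so deleting the $0$-th coordinate is injective on it) together with the dichotomy of whether $\boldzero\in\operatorname{aff}(X)$, which is settled by \Cref{Cor: h=0}; parts (iv), (i), and (ii) are then essentially formal consequences. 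Your route is leaner and isolates the single place where membership in a cell is used: when $\pm\bolde_k\notin X$, pairing with $\boldalpha$ keeps $\boldzero$ out of the affine hull. This is also what implicitly excludes unbalanced cycles --- for an odd cycle avoiding $\{k_1,k_2\}$, whatever the edge orientations, $\boldzero$ \emph{is} an affine combination of the corresponding points, so your Case~2 shows such a configuration can never occur inside a cell, a fact the paper instead imports from \Cref{trm: properties of cell subgraphs}~\ref{trm part: balanced cycle}. Two further points of comparison. First, your treatment of (ii) is actually more careful than the paper's: in the direction ``chordless cycle $\Rightarrow$ circuit'' you explicitly handle the deletion of one of $\pm\bolde_k$, where part (i) cannot be invoked because the hypothesis on the subset fails (cf.\ \Cref{rmk: independent cycles}); the paper's converse argument (``$G_X$ is a forest or contains a strictly smaller cycle'') silently skips this case, since deleting a single special point leaves $G_X$ unchanged. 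Second, what the paper's more computational route buys is reusability: the explicit dependence relations \eqref{eq: cycle dependence 1} and \eqref{eq: cycle dependence 2} and their sign patterns are cited again in \Cref{cor: signature for corank 1} and in the volume computations of \Cref{thm: cell volume}, whereas your more structural argument does not produce them.
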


\begin{proof}

    \begin{enumerate}[label=(\roman*),wide, labelwidth=!, itemindent=0em]
    
    \item 
    
        First, we show that if $G_X$ is not a forest
        ($G_X$ contains a cycle), then $X$ is affinely dependent.
        Since this cycle is also a subgraph of $G_C$, 
        by \Cref{trm: properties of cell subgraphs} \ref{trm part: balanced cycle}, it must be a balanced cycle.
        Let $X'$ be the largest subset of $X$ for which $G_X$ is this balanced cycle.
        It is sufficient to show that $X' \subset X$ is dependent.
        We consider two cases.

            
        In the cases where $\pm(\bolde_{k_1} - \bolde_{k_2}) \not\in X'$,
        $m = |X'|$ is even and it is the size of the cycle $G_{X'}$.
        The cycle thus consist of edges
        $i_1 \leftrightarrow \cdots \leftrightarrow i_{m} \leftrightarrow i_1$
        for some $i_j \in \{0,\dots,n\}$.
        As noted in \eqref{eq: X even cycle}, $X'$ can be expressed as
        \[ 
           \left\{ 
               \lambda_r (\bolde_{i_r} - \bolde_{i_{r+1}}) \mid  
               r=1, \dots, m, \; \bolde_{i_1}=\bolde_{i_{m+1}}, \; 
               \lambda_r \in \{-1, 1\} 
           \right\},
        \]
        where $\lambda_r$'s indicates the orientations of the edges in $\digraph{G}_{X'}$
        such that  $\sum_{r=1}^m \lambda_r=0$.
        We can see that
        \begin{equation} \label{eq: cycle dependence 1}
            \sum_{r=1}^m \lambda_r \lambda_r (\bolde_{i_r} - \bolde_{i_{r+1}}) = 
            \sum_{r=1}^m  (\bolde_{i_r} - \bolde_{i_{r+1}}) = \boldzero.
        \end{equation}
        Therefore $X'$ and hence $X$ itself are affinely dependent.
        
        In the cases where $\pm (\bolde_{k_1} - \bolde_{k_2}) \in X'$,
        $G_{X'}$ consists of edges
        $i_1 \leftrightarrow \cdots \leftrightarrow i_{m} \leftrightarrow i_1$ 
        with $i_1=k_1$ and $i_{m+1}=k_2$.
        Then $X'$ can be expressed as
        \begin{align*} 
            \left\{ 
                \lambda_r (\bolde_{i_r} - \bolde_{i_{r+1}}) \mid  
                r=1, \dots, m-1, 
                \lambda_r \in \{-1, 1\} \right
            \} 
            \cup 
            \left\{ 
                \pm (\bolde_{k_1}-\bolde_{k_2}) 
            \right\}
        \end{align*} 
        (See \eqref{eq: X is odd cycle}.)
        As in previous case, 
        $\sum_{k=1}^{m-1} \lambda_r=0$. 
        We can verify that  
        \begin{align} \label{eq: cycle dependence 2}
            \sum_{r=1}^{m} 
            \lambda_r \lambda_r (\bolde_{i_r} - \bolde_{i_{r+1}}) + 
            \left(-\frac{1}{2} \right)  (\bolde_{k_1}-\bolde_{k_2}) + 
            \frac{1}{2} (-\bolde_{k_1}+\bolde_{k_2}) 
            = \boldzero.
        \end{align}
        Therefore, $X'$ and hence $X$ are also dependent in this case.
        
        Conversely, suppose that $G_X$ is a forest, 
        we want to show $X$ is affinely independent. 
        It is sufficient to consider the case where $G_X$ is connected,
        i.e., $G_X$ is a tree.  
        If $\pm (\bolde_{k_1} - \bolde_{k_2}) \not\in X$,
        then by \Cref{trm: properties of cell subgraphs} \ref{trm part: exactly one cycle}, 
        $\digraph{G}_{X}$ is a directed acyclic graph.
        Through topological ordering, we can re-index the nodes
        so that $(i,j) \in \edges(\digraph{G}_{X})$ implies $i < j$ (or $j = 0$).
        In this case, 
        the matrix whose columns are $(\bolde_i-\bolde_j, 1)$ for $\bolde_i - \bolde_j \in X$
        is lower triangular with $\pm 1$ on the diagonal
        and therefore has rank $|X|$.
        Hence, $\dim(X) = |X| - 1$, and $X$ is affinely independent.
    
        On the other hand, if $\pm (\bolde_{k_1} - \bolde_{k_2}) \in X$, 
        we can re-index the nodes so that $k_1 = 0$, $k_2=1$, 
        and $(i,j) \in \edges(\digraph{G}_{X})$ implies $i < j$ (or $j = 0$).
        In this case,  the matrix whose columns are $(\bolde_i-\bolde_j-\bolde_1)$ 
        for $\bolde_i - \bolde_j \in X \setminus \{\bolde_1 \}$
        is upper triangular with diagonal entries $\pm 1$ or $-2$. 
        This matrix therefore has rank $|X|-1$.
        Hence, $\dim(X) = |X| - 1$, and $X$ is affinely independent.

    \item 
    
        We now establish the equivalence between
        $X$ being a circuit and $G_X$ being a chordless cycle.
        For one direction, 
        suppose $X$ is a circuit,
        then $X$ is dependent by definition.
        By part \ref{trm part: tree - aff.indep}, $G_X$ contains a cycle
        and the corresponding subset of points in $X$
        is dependent.
        However, the circuit $X$, being a minimal affinely dependent set,
        must be exactly this set.
        Therefore $G_X$ is exactly this cycle.
            
        Conversely, if $X$ is not a circuit,
        then either $X$ is affinely independent or 
        $X$ contains a proper affinely dependent subset $X'$.
        According to part \ref{trm part: tree - aff.indep}, 
        $G_X$ is either a forest or it contains a strictly smaller cycle,
        and thus $G_X$ is not a chordless cycle.
        
            
            
    \item 
        To compute $\dim(X)$, 
        it is sufficient to take a maximal affinely independent subset $X'$.
        Furthermore, we can assume $\pm (\bolde_{k_1} - \bolde_{k_2})$
        are contained in $X'$ if they are contained in $X$.
        By part \ref{trm part: tree - aff.indep},
        $G_{X'}$ is a spanning forest of $G_X$.
        Therefore 
        \[
            \dim(X) = \dim(X') =
            \begin{cases}
                |V(G_{X'})| - m     &\text{if } \pm (\bolde_{k_1} - \bolde_{k_2}) \in X' \\
                |V(G_{X'})| - m - 1 &\text{otherwise } 
            \end{cases}
        \]
        which produces the desired equality.
    
    \item 
We choose a spanning forest $G_{X'}$ of $G_X$
        which contains the edge $\{k_1, k_2\}$ if $\pm (\bolde_{k_1}, \bolde_{k_2}) \in X$. 
        This produces the maximally independent subset $X' \subseteq X$. 
        Thus, $\dim X' = \dim X$. 
        Let $\mu$ be the cyclomatic number of $G_X$.
        If $\pm (\bolde_{k_1}, \bolde_{k_2}) \not\in X$, then
        \begin{align*}
            \mu  
            &= |\edges(G_X)|-|\edges(G_{X'})| \\
            &= |X| - |X'| \\
            &= |X| - (\dim X' + 1) \\
            &= \corank(X).
        \end{align*}
        Similarly, if $\pm (\bolde_{k_1}, \bolde_{k_2}) \in X$, then
        \[
            \mu = |\edges(G_X)|-|\edges(G_{X'})| 
            = (|X|+1) - (|X'|+1) 
            = \corank(X).
        \]
    \end{enumerate}
\end{proof}

\begin{remark}\label{rmk: independent cycles}
    To highlight the necessity of
    the restriction in \Cref{thm:matroid-morphism} that
    the subset of a cell in question must contain both 
    $\pm (\bolde_{k_1} - \bolde_{k_2})$ or none of them ,
    we consider a subset $X$ of a cell that 
    include exactly one of the points $\pm (\bolde_{k_1} - \bolde_{k_2})$
    for which $G_X$ is a chordless cycle.
    We can verify that $X$ is affinely independent,
    in contrast with the result in 
     \Cref{thm:matroid-morphism} part \ref{trm part: tree - aff.indep}.
\end{remark}

For a corank-1 subset $X$ of a cell $C \in \subdiv_{k_1,k_2}$
which satisfies the condition that $X$ contains either 
both $\pm(\bolde_{k_1} - \bolde_{k_2})$ or none of them,
the graph $G_X$ contains a unique chordless cycle. By \Cref{trm: properties of cell subgraphs} \ref{trm part: balanced cycle}, this cycle must be balanced
in the sense that the set of corresponding directed edges
is partitioned into equal halves having opposite orientations
(\Cref{remark: balance}).
From this observation, we can derive signature of such  corank-1 subset.

\begin{corollary} \label{cor: signature for corank 1}
    Let $X$ be a corank-1 subset of $C \in \subdiv_{k_1,k_2}$
    such that
    $X$ contains either both $\pm(\bolde_{k_1} - \bolde_{k_2})$ or none of them,
    let $m$ be the circumference of $G_X$,
    then the signature of $X$ is $\left( \lceil \frac{m}{2} \rceil , \lceil \frac{m}{2} \rceil, |X|- 2 \lceil \frac{m}{2} \rceil \right) $.
\end{corollary}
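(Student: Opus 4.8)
The plan is to read off the signature directly from the affine dependence relations already constructed in the proof of \Cref{thm:matroid-morphism}. First I would note that, since $X$ has corank one, \Cref{thm:matroid-morphism} part \ref{trm part: corank=cycl.number} shows that $G_X$ has cyclomatic number $1$; hence $G_X$ contains a single independent cycle $O$, which is necessarily chordless (a chord would create a second independent cycle) and whose length is the circumference $m$. Let $X'$ be the subset of $X$ representing the edges of $O$, as in \eqref{eq: X even cycle} and \eqref{eq: X is odd cycle}; by \Cref{thm:matroid-morphism} part \ref{trm part: simple cycle - circuit} this $X'$ is the unique circuit contained in $X$. Because a corank-one configuration admits a unique affine dependence up to scaling, that dependence is supported exactly on $X'$: every point of $X \setminus X'$ has coefficient $0$ and so contributes to $\sigma^0$, while every coefficient on $X'$ is nonzero.

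Next I would separate the two cases according to whether $O$ uses the contracted edge $\{k_1,k_2\}$. Since $O$ is balanced by \Cref{trm: properties of cell subgraphs} part \ref{trm part: balanced cycle}, this is precisely the dichotomy controlling the parity of $m$: if $\pm(\bolde_{k_1}-\bolde_{k_2}) \notin X'$ then $m$ is even, and if $\pm(\bolde_{k_1}-\bolde_{k_2}) \in X'$ then $m$ is odd. In the even case the dependence is \eqref{eq: cycle dependence 1}, whose $m$ coefficients are $\lambda_1,\dots,\lambda_m \in \{\pm 1\}$. In the odd case the dependence is \eqref{eq: cycle dependence 2}, whose $m-1$ ``ordinary'' coefficients are $\lambda_1,\dots,\lambda_{m-1} \in \{\pm 1\}$ and whose two ``special'' coefficients, attached to $(\bolde_{k_1}-\bolde_{k_2})$ and $-(\bolde_{k_1}-\bolde_{k_2})$, are $-\tfrac12$ and $+\tfrac12$.

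The count of signs then follows from the balancing identity $\sum_r \lambda_r = 0$ established in \eqref{eq:2: m is even} and \eqref{eq:4: m is even}. In the even case this forces exactly $m/2$ of the $\lambda_r$ to equal $+1$ and $m/2$ to equal $-1$, so $\sigma^+ = \sigma^- = m/2 = \lceil m/2 \rceil$. In the odd case it gives $(m-1)/2$ of each sign among the $\lambda_r$, and the two half-integer coefficients add one further positive and one further negative entry, so $\sigma^+ = \sigma^- = \tfrac{m-1}{2} + 1 = \tfrac{m+1}{2} = \lceil m/2 \rceil$. In both cases $\sigma^+ = \sigma^- = \lceil m/2 \rceil$, and since $\sigma^+ + \sigma^- + \sigma^0 = |X|$ we obtain $\sigma^0 = |X| - 2\lceil m/2 \rceil$, which is the asserted signature.

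The main obstacle is bookkeeping rather than conceptual. The delicate point is the odd case: I must verify that the two half-integer coefficients in \eqref{eq: cycle dependence 2} are genuinely of opposite sign, so that they contribute exactly one $+$ and one $-$; this is what makes both parity cases collapse into the single uniform formula with $\lceil m/2 \rceil$. A secondary point to confirm is that the balancing relation $\sum_r \lambda_r = 0$ may indeed be invoked for the cycle $O$ sitting inside $X$ (and not merely for a cell-spanning configuration), which follows by applying \Cref{trm: properties of cell subgraphs} part \ref{trm part: balanced cycle} to $G_X$, a subgraph of $G_C$.
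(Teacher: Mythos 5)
Your proposal is correct and follows essentially the same route as the paper's proof: both identify the unique chordless (balanced) cycle via \Cref{thm:matroid-morphism}, take the circuit $X'$ supporting the unique affine dependence, read the coefficient signs off the explicit relations \eqref{eq: cycle dependence 1} and \eqref{eq: cycle dependence 2} together with the balancing identity $\sum_r \lambda_r = 0$ and the opposite-sign $\pm\tfrac12$ coefficients on $\pm(\bolde_{k_1}-\bolde_{k_2})$, and assign $\sigma^0 = |X \setminus X'|$. The only cosmetic difference is that you count $\lceil m/2 \rceil$ directly in each parity case, whereas the paper counts $|X'|/2$ and then converts $|X'|$ to $m$; your explicit remark that corank one forces the dependence to be supported exactly on $X'$ is a point the paper leaves implicit.
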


\begin{proof}
    By \Cref{thm:matroid-morphism} \ref{trm part: corank=cycl.number}, 
    $X$ contains a unique chordless cycle, which is balanced. 
    Let $X' \subseteq X$ be the maximum subset for which $G_{X'}$ is this cycle,
    then $X'$ is a circuit,  by \Cref{thm:matroid-morphism} \ref{trm part: simple cycle - circuit}.
    As shown in \eqref{eq: cycle dependence 1} and \eqref{eq: cycle dependence 2}, exactly half of coefficients of affine dependence for $X'$ are positive, while another half are negative.  Moreover, and
    the points $\pm (\bolde_{k_1} - \bolde_{k_2})$, 
    if present, have coefficients with opposite signs. 
    Therefore, the signature of the circuit $X'$ is 
    $\left( \frac{|X'|}{2}, \frac{|X'|}{2}, 0 \right)$.
    The signature of $X$ is thus $\left( \frac{|X'|}{2}, \frac{|X'|}{2}, |X \setminus X'| \right)$.
    Let $m$ be the circumference of $G_X$, then
    \begin{align*}
        m= |\edges(G_{X'})|=\begin{cases}
        |X'|,    &\pm (\bolde_{k_1} - \bolde_{k_2}) \not\in X'\\
        |X'|-1,  &\pm (\bolde_{k_1} - \bolde_{k_2}) \in     X'.
        \end{cases}
    \end{align*}
    Thus $\frac{|X'|}{2} = \ceil*{ \frac{m}{2} }$,
    and the desired result follows immediately.
 \end{proof}

\begin{remark}
    It is worth interpreting the above result from the viewpoint of
    Radon's Theorem \cite{Radon1921Mengen},
    which states that a circuit $X \subset \R^n$
    can always be partitioned into two disjoint sets $X^+$ and $X^-$
    whose convex hulls have a nonempty intersection.
    The observation above shows that if $\pm (\bolde_{k_1} - \bolde_{k_2})$
    are both contained in a circuit $X$ in a cell $C \in \subdiv_{k_1,k_2}$,
    then the two points are separated by Radon's partition.
\end{remark}

\Cref{thm:matroid-morphism} lays out properties of subsets of a cell.
Viewing a cell $C \in \subdiv_{k_1,k_2}$ as a subset of itself,
which according to \Cref{lem:special-edge} satisfies the condition that $\pm(\bolde_{k_1}-\bolde_{k_2}) \in C$,
we derive the properties of a cell $C$.

\begin{corollary}\label{cor: cell invariants}
    Let $C \in \subdiv_{k_1,k_2}$ then
    \begin{enumerate} [label=(\roman*)] 
        \item
            $C$ is simplicial if and only $G_C$ is a spanning tree.
            
        \item
            $C$ is dependent if and only if $G_C$ contains a balanced cycle.
        
        \item
            $C$ is a circuit if and only if $G_C$ is a chordless cycle.
            
        \item \label{cor part: corank=cycl.number}
            The corank of $C$ equals the cyclomatic number of $G_C$.
    \end{enumerate}
\end{corollary}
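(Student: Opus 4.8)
The plan is to recognize that \Cref{cor: cell invariants} is essentially the specialization of \Cref{thm:matroid-morphism} to the case $X = C$, so the whole proof reduces to checking that a cell, viewed as a subset of itself, meets the hypothesis of that theorem and then reading off each of the four statements. The crucial preliminary observation is that by \Cref{lem:special-edge} every cell $C \in \subdiv_{k_1,k_2}$ contains both $\pm(\bolde_{k_1}-\bolde_{k_2})$; hence $C$ trivially satisfies the requirement that a subset contain either both of these points or neither, and \Cref{thm:matroid-morphism} applies verbatim with $X = C$.

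With that in place, two of the parts are immediate. Part (iii) is exactly \Cref{thm:matroid-morphism}\ref{trm part: simple cycle - circuit} applied to $X = C$, and part (iv) is exactly \Cref{thm:matroid-morphism}\ref{trm part: corank=cycl.number}. For part (ii), I would argue that $C$ is affinely dependent precisely when $G_C$ fails to be a forest, which is the contrapositive of \Cref{thm:matroid-morphism}\ref{trm part: tree - aff.indep}; a non-forest contains a cycle, and by \Cref{trm: properties of cell subgraphs}\ref{trm part: balanced cycle} every cycle of $G_C$ is balanced, so ``contains a cycle'' and ``contains a balanced cycle'' coincide. This yields the stated equivalence.

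The only part requiring more than a direct citation is (i), and this is where the main subtlety lies. First I would note that since $\adjp_G$ is full-dimensional we have $\dim(C) = n$, so $C$ being simplicial ($|C| = \dim(C)+1$) is equivalent to $C$ being affinely independent, which by \Cref{thm:matroid-morphism}\ref{trm part: tree - aff.indep} happens exactly when $G_C$ is a forest. To upgrade ``forest'' to ``spanning tree'' I would invoke two facts: \Cref{trm: properties of cell subgraphs}\ref{trm part: C all nodes} guarantees $\nodes(G_C) = \nodes(G)$, so $G_C$ spans all $n+1$ vertices, and the dimension formula of \Cref{thm:matroid-morphism} (its third item), evaluated at $X = C$ with $\{k_1,k_2\} \in \edges(G_C)$, gives $\dim(C) = |\nodes(G_C)| - m = (n+1) - m$, where $m$ is the number of connected components of $G_C$. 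Forcing $\dim(C) = n$ then pins $m = 1$, so the spanning forest $G_C$ is connected, i.e., a spanning tree. For the converse, a spanning tree is in particular a forest, so \Cref{thm:matroid-morphism}\ref{trm part: tree - aff.indep} again makes $C$ affinely independent and hence simplicial.

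I expect the main obstacle to be purely bookkeeping: confirming the forest-to-spanning-tree upgrade in part (i) by correctly tracking how the presence of the contracted edge $\{k_1,k_2\}$ enters the dimension formula and how full-dimensionality of the cell forces connectivity. Everything else is a direct transcription of \Cref{thm:matroid-morphism} together with \Cref{trm: properties of cell subgraphs}, so no new geometric or combinatorial input should be required.
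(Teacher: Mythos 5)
Your proposal is correct and takes essentially the same route as the paper: the paper derives \Cref{cor: cell invariants} precisely by viewing the cell $C$ as a subset of itself, noting via \Cref{lem:special-edge} that $C$ contains both $\pm(\bolde_{k_1}-\bolde_{k_2})$ and hence satisfies the hypothesis of \Cref{thm:matroid-morphism}, which is then read off part by part together with \Cref{trm: properties of cell subgraphs}. Your additional bookkeeping in part (i) --- using the dimension formula and $\nodes(G_C)=\nodes(G)$ to upgrade ``forest'' to ``spanning tree'' --- is exactly the detail the paper leaves implicit, so no divergence in method.
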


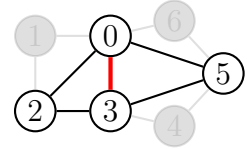
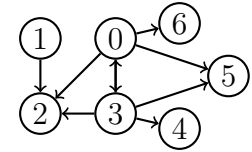
\begin{wrapfigure}{r}{0.36\textwidth}
    \centering
     \begin{subfigure}[b]{0.3\textwidth}
        \centering
        \begin{tikzpicture}[
            every node/.style={circle,thick,draw,inner sep=2},
            every edge/.style={draw,thick}]
            \node (0) at ( 0,1) {0};
            \node (1) at (-1,1) [color= gray,fill=gray,opacity=0.25] {1};
            \node (2) at (-1,0) {2};
            \node (3) at ( 0,0) {3};
            \node (4) at (0.85,-.2) [color=gray,fill=gray,opacity=0.25] {4};
            \node (6) at (0.85,1.2)[color=gray,fill=gray,opacity=0.25] {6};
            \node (5) at (1.5,0.5) {5};
            \path  (1) edge [color=gray,opacity=0.25] (2);
            \path  (1) edge [color=gray,opacity=0.25] (0);
            \path  (0) edge [color=red,style={ultra thick}] (3);
            \path  (2) edge (0);
            \path  (2) edge (3);
            \path  (6) edge [color=gray,opacity=0.25] (0);
            \path  (4) edge [color=gray,opacity=0.25] (3);
            \path  (4) edge [color=gray,opacity=0.25] (5);
            \path  (6) edge [color=gray,opacity=0.25] (5);
            \path  (5) edge (3);
            \path (5) edge (0);
        \end{tikzpicture}
        \caption{A balanced subgraph containing two cycles of the graph in \Cref{fig:G}}
        \label{fig: running: balanced subgraph}
    \end{subfigure}
    \vspace{2ex}
    
    \begin{subfigure}[b]{0.3\textwidth}
        \centering
        \begin{tikzpicture}[
            every node/.style={circle,thick,draw,inner sep=2},
            every edge/.style={draw,thick}]
            \node (0) at ( 0,1) {0};
            \node (1) at (-1,1) {1};
            \node (2) at (-1,0) {2};
            \node (3) at ( 0,0) {3};
            \node (4) at (0.85,-.2) {4};
            \node (6) at (0.85,1.2) {6};
            \node (5) at (1.5,0.5) {5};
            \path  (1) [->] edge (2);
            \path  (0) [->] edge (3);
            \path  (3) [->] edge (0);
            \path  (0) [->] edge (2);
            \path  (3) [->] edge (2);
            \path  (0) [->] edge (6);
            \path  (3) [->] edge (4);
            \path  (3) [->] edge (5);
            \path (0) [->] edge (5);
        \end{tikzpicture}
        \caption{The cell subgraph of a corank-2 cell.}
        \label{fig: running: cell subgraph}
    \end{subfigure}
    \caption{Maximum corank.}
\end{wrapfigure}
Combining part \ref{cor part: corank=cycl.number} of the above corollary 
and \Cref{trm: properties of cell subgraphs} \ref{trm part: balanced cycle},
we can conclude that the corank of a cell is bounded by the maximum cyclomatic number
of any balanced subgraph of $G$, which we call balanced circuit rank
(\Cref{balanced cycle}).
The graph $G$ shown in \Cref{fig:G}, for example,
has a balanced subgraph with cyclomatic number 2.
See \Cref{fig: running: balanced subgraph}.
In fact, it is the maximum cyclomatic number
any balanced subgraph of $G$ can have.
Therefore, we expect the maximum corank of cells in 
$\subdiv_{0,3}$ to be no more than 2.

Interestingly, this upper bound on corank is also attainable.
\Cref{fig: running: cell subgraph} shows the directed cell subgraph
associated with a cell in $\subdiv_{0,3}(G)$ derived from the
running example in \Cref{fig:G}.
This cell, containing 9 points in $\R^6$, is of corank 2,
which matches the maximum possible corank.
In the following, we show that this maximum possible corank,
given by the balanced circuit rank of the graph,
is always attainable.

\begin{theorem}\label{thm: max corank}
    The maximum corank of cells in $\subdiv_{k_1,k_2}(G)$ 
    is the balanced circuit rank of $G$. 
\end{theorem}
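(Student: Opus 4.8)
The plan is to prove the two inequalities separately, with the upper bound essentially already in hand. By \Cref{cor: cell invariants} \ref{cor part: corank=cycl.number}, the corank of any cell $C \in \subdiv_{k_1,k_2}$ equals the cyclomatic number of its undirected cell subgraph $G_C$, and by \Cref{trm: properties of cell subgraphs} \ref{trm part: balanced cycle} the graph $G_C$ is balanced. Hence the corank of every cell is at most the largest cyclomatic number attained by a balanced subgraph of $G$, which is by definition the balanced circuit rank $\beta$ of $G$ (\Cref{balanced cycle}). The substance of the proof is therefore to exhibit a single cell whose corank equals $\beta$.

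To this end, I would start from any balanced subgraph realizing $\beta$ and enlarge it to a \emph{maximal} balanced subgraph $H$, adding edges of $G$ one at a time while remaining balanced. Two short observations make $H$ well behaved. First, $H$ must be spanning: an omitted vertex could be attached to $H$ by a single edge of $G$, and a pendant edge creates no cycle and so preserves balancedness, contradicting maximality. Second, $H$ must be connected: if it had two components, connectedness of $G$ would supply an edge between them whose addition creates no new cycle, again contradicting maximality. Since adding edges never decreases the cyclomatic number and it can never exceed $\beta$, we obtain $\mu(H) = \beta$. Thus $H$ is a connected, spanning, balanced subgraph with cyclomatic number $\beta$.

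Next I would convert $H$ into the inner normal vector of a cell by a two-level potential. Because $H$ is balanced, its contraction $H \sslash \{k_1,k_2\}$ is bipartite, and being connected it has an essentially unique proper $2$-coloring by the values $\{0,1\}$; choosing the color of the vertex containing node $0$ to be $0$ yields a function $\boldalpha \in \{0,1\}^{\nodes(G)}$ with $\alpha_0 = 0$ and $\alpha_{k_1} = \alpha_{k_2}$, since the latter both inherit the color of the merged vertex. Every edge of $H$ other than $\{k_1,k_2\}$ joins the two color classes, so $|\alpha_i - \alpha_j| = 1$ there, while on $\{k_1,k_2\}$ the difference is $0$. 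Crucially, because $\boldalpha$ takes only the values $0$ and $1$, \emph{every} edge of $G$ satisfies $|\alpha_i - \alpha_j| \le 1$, so $\inner{\bolde_i - \bolde_j}{\boldalpha} \ge -1$ for both orientations while $\inner{\pm(\bolde_{k_1} - \bolde_{k_2})}{\boldalpha} = 0$. This is exactly the situation described in \Cref{Cor: h=0}, so $\hat{\boldalpha} = (\boldalpha, 1)$ is the upward-pointing inner normal of a lower face $\hat{C}$ of $\hat{\adjp}_G$ under the lifting $\omega_{k_1,k_2}$; its projection $C$ consists of the points $\bolde_i - \bolde_j$ on which $\inner{\cdot}{\boldalpha}$ attains the value $-1$, together with $\pm(\bolde_{k_1} - \bolde_{k_2})$.

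Finally I would verify that $C$ is genuinely a cell of corank $\beta$. By construction the tight edges include all edges of $H$, so the spanning connected graph $H$ sits inside $G_C$ on the common vertex set $\nodes(G)$; the dimension formula of \Cref{thm:matroid-morphism}(iii) then gives $\dim C = n$, confirming that $\hat{C}$ is a lower \emph{facet} and $C$ a cell of $\subdiv_{k_1,k_2}$. Since $H \subseteq G_C$ share the same vertex set, the cycle space of $H$ embeds in that of $G_C$, whence $\mu(G_C) \ge \mu(H) = \beta$; combined with the upper bound $\mu(G_C) \le \beta$ this yields $\corank(C) = \mu(G_C) = \beta$ via \Cref{cor: cell invariants} \ref{cor part: corank=cycl.number}, completing the argument. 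I expect the main obstacle to be exactly the full-dimensionality check: one must rule out that the candidate normal vector defines only a lower-dimensional face, and this is precisely what forces the passage to a \emph{connected spanning} maximal balanced subgraph, as its presence inside $G_C$ is what the dimension formula needs to certify that $C$ is a cell.
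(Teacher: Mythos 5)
Your proof has the same two-part skeleton as the paper's (the upper bound, which you take essentially verbatim from \Cref{trm: properties of cell subgraphs} and \Cref{cor: cell invariants}, plus the construction of a single cell attaining the bound), but your construction of that cell is genuinely different from the paper's, and it breaks at its central step: the claim that, because the maximal balanced subgraph $H$ is balanced, the graph obtained by merging $k_1$ and $k_2$ in $H$ is bipartite, so that a proper $2$-coloring $\boldalpha$ with $\alpha_{k_1}=\alpha_{k_2}$ exists. Balancedness in the sense of \Cref{balanced cycle} constrains only the parities of the cycles of $H$ (discounting the edge $\{k_1,k_2\}$); it places no constraint on the parity of the $k_1$--$k_2$ paths inside $H$, and it is precisely those paths that become cycles after the merge. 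Concretely, take $G$ to be the $6$-cycle $k_1,a,b,k_2,c,d$ together with the chord $\{k_1,k_2\}$, contracted along the chord. The $6$-cycle $H$ is balanced (it misses the chord and has an even number of edges), spanning, connected, and maximal as a balanced subgraph (adding the chord creates the $4$-cycle on $k_1,a,b,k_2$, which contains the contraction edge plus three other edges, hence is unbalanced); so $H$ is exactly what your procedure produces, with $\mu(H)=1$ equal to the balanced circuit rank as literally defined. Yet merging $k_1$ and $k_2$ turns $H$ into two triangles glued at a vertex, which is not bipartite: since $k_1$ and $k_2$ are joined in $H$ only by odd paths, no $\boldalpha$ with $\alpha_{k_1}=\alpha_{k_2}$ makes every edge of $H$ bichromatic, and your candidate normal vector does not exist.

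The gap cannot be patched within your framework, because on that example the conclusion itself fails under the literal reading of \Cref{balanced cycle}: by \Cref{lem:special-edge} every cell subgraph contains $\{k_1,k_2\}$, and by \Cref{trm: properties of cell subgraphs} it is balanced, but any subgraph of this $G$ that contains the chord together with some cycle also contains an unbalanced $4$-cycle; hence every cell is simplicial and the maximum corank is $0$, not $1$. The hypothesis your coloring argument actually needs is that $H$ stays balanced after adjoining the contraction edge (equivalently, that every $k_1$--$k_2$ path in $H$ has even length); under that hypothesis your $2$-coloring argument is correct, and in fact cleaner and more self-contained than the paper's. The paper instead builds $\boldalpha$ from a spanning tree $T$ of $G$ containing $\{k_1,k_2\}$, oriented with alternating signs along root-paths, and then collects one extra cell point for each edge of the balanced subgraph $G_S$ lying outside $T$ --- but it stumbles at the same spot, namely the unjustified claim that ``without loss of generality'' the cycles of $G_S$ are fundamental cycles with respect to $T$: in the example above, no spanning tree containing the chord has the $6$-cycle as a fundamental cycle. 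So the flaw in your argument mirrors one in the paper's own proof; what would repair both is restricting the balanced circuit rank to balanced subgraphs that contain (or remain balanced together with) the contraction edge.
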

\begin{proof}
    Let $\mu$ be the balanced circuit rank of $G$ (\Cref{balanced cycle}). 
    Then by \Cref{trm: properties of cell subgraphs} \ref{trm part: balanced cycle}, 
    for any cell $C$, $G_C$ is a balanced subgraph of $G$. 
    Hence, $G_C$ has a cyclomatic number at most $\mu$,
    and by \Cref{thm:matroid-morphism} \ref{trm part: corank=cycl.number}, $\corank(C) \le \mu$.
     
    For the converse, we assume that there exists a balanced subgraph $G_S$ of $G$ 
    with a cyclomatic number strictly greater than $\mu$. 
    We want to show that there exists a cell $C$ with $\corank (C) > \mu$.
    Let $T$ be a spanning tree of $G$ containing the edge $\{k_1, k_2\}$.
    We will construct a subset $X$ of the vertices of $\adjp_G$ such that
    $G_X = T$, and points of $X$ form a basis of a cell in $\subdiv_{k_1,k_2}$.
    
    Taking $k_1$ to be the root, for any $i \in \nodes(G) \setminus \{k_1,k_2\}$,
    the spanning tree $T$ contains a unique path
    $k_1 \leftrightarrow i_1 \leftrightarrow \cdots \leftrightarrow i_{r-1} \leftrightarrow i_{r} = i$
    of length $r$. Thus the map
    \begin{equation*}
        \rho(i) = 
        \begin{cases}
        (-1)^{r-1} \, ( \bolde_{i_r} - \bolde_{i_{r-1}} ) & \text{if $i_1=k_2$},\\
        (-1)^{r}   \, ( \bolde_{i_r} - \bolde_{i_{r-1}} ) & \text{otherwise}
        \end{cases}
    \end{equation*}
    gives rise to a well-defined function $\rho : \nodes(G) \setminus \{k_1,k_2\} \to \adjp_G$.
    With this, we define
    \[
        X = \{ \rho(i) \mid i \in \nodes(G) \setminus \{k_1,k_2\} \}
        \;\cup\;
        \{ \pm (\bolde_{k_1} - \bolde_{k_2} \}.
    \]
    That is, along the the unique path in $T$ between $k_1$ and any node $i$ except $k_2$,
    $X$ contains the points representing directed edges along this path
    with alternating orientations.
    By this construction, $G_X = T$, $|X| = n+1$, and $X$ form a simplex in $\adjp_G$.
    Therefore there is a unique vector $\boldalpha \in \R^n$ such that
    \begin{align}\label{eq: alpha for spanning tree}
        \biginner{ \boldx }{ \boldalpha } + 1 &= 0
        \quad\text{for each } \boldx \in X \setminus \{ \pm (\bolde_{k_1} - \bolde_{k_2}) \}, \text{and} \\
        \biginner{ \boldx }{ \boldalpha } + 0 &= 0
        \quad\text{for } \boldx = \pm (\bolde_{k_1} - \bolde_{k_2}).\nonumber
    \end{align}
    In other words, each point in $X$ satisfies the equation
    $\inner{ \bullet }{ \boldalpha } + \omega_{k_1,k_2}(\bullet) = 0$.
    
    We can see for any $\boldx \in X \setminus \{ \pm (\bolde_{k_1} - \bolde_{k_2}) \}$,
    \[
        \biginner{ -\boldx }{ \boldalpha } + 1 =
        -\biginner{ \boldx }{ \boldalpha } + 1 =
        2 > 0.
    \]
    Therefore points in $-X$ satisfy the inequality
    $\inner{ \bullet }{ \boldalpha } + \omega_{k_1,k_2}(\bullet) \ge 0$.
    
    For any point $\bolde_i - \bolde_{i'}$ not in $X$ or $-X$,
    the corresponding edge $\{ i, i' \}$ is not in $T$.
    But $T$ is a spanning tree, so $i$ and $i'$ are connected through a unique path
    $i = i_1 \leftrightarrow \cdots \leftrightarrow i_m = i'$ 
    that is contained in $T$.
    By construction, $G_X = T$, thus there are $\lambda_1,\dots,\lambda_m \in \{ \pm 1 \}$, 
    representing the orientation of the directed edges,
    such that $\lambda_j (\bolde_{i_j} - \bolde_{i_{j+1}}) \in X$ for $j=1,\dots,m-1$.
    Moreover, $\lambda_1,\dots,\lambda_m$ carry alternating signs,
    except for those associated with $\pm (\bolde_{k_1} - \bolde_{k_2})$.
    We can verify that if $\{k_1, k_2\}$ is on this path, then 
    \begin{align*}
        \bolde_{i_1} - \bolde_{i_m} =
        \lambda_1 \lambda_1(\bolde_{i_1}-\bolde_{i_2})+ \dots +           \frac{1}{2}  (\bolde_{k_1}-\bolde_{k_2}) + &\left(-\frac{1}{2} \right) (-\bolde_{k_1}+\bolde_{k_2})+\dots  +\\
        &\lambda_{m-1}\lambda_{m-1}(\bolde_{i_{m-1}}-\bolde_{i_m}). 
    \end{align*}
    Otherwise, we have
    \[
        \bolde_{i_1}-\bolde_{i_m} =
        \lambda_1 \lambda_1(\bolde_{i_1}-\bolde_{i_2}) + \dots + \lambda_{m-1}\lambda_{m-1}(\bolde_{i_{m-1}}-\bolde_{i_m}). 
    \]
    In either case, by linearity and \eqref{eq: alpha for spanning tree},
    \begin{equation*}\label{eq:<extra edge, alpha>}
        \inner{ \bolde_{i} - \bolde_{i'} }{\boldalpha} = 
        \inner{ \bolde_{i_1} - \bolde_{i_m} }{\boldalpha} = 
        - \sum_{j \in J_{i,i'} } \lambda_j,
    \end{equation*}
    where $J_{i,i'}$ are the indices $j \in \{1,\dots,m\}$ 
    for which $\{ i_j, i_{j+1} \} \ne \{ k_1, k_2 \}$.
    By assumption, $\lambda_1,\dots,\lambda_m$ carry alternating sign pattern,
    thus the above sum must be in $\{ -1, 0, +1 \}$.
    Consequently,
    \begin{equation}\label{equ: equality in balanced cycle}
        \inner{ \bolde_{i} - \bolde_{i'} }{ \boldalpha } + 1 \ge 0.
    \end{equation}
    We can thus conclude that every vertex of $\adjp_G$ outside $\pm X$
    also satisfy the inequality
    $\inner{ \bullet }{ \boldalpha } + \omega_{k_1,k_2}(\bullet) \ge 0$.
    In other words, the equation $\inner{ \bullet }{ \boldalpha } + \omega_{k_1,k_2}(\bullet) = 0$
    defines a supporting hyperplane for $\hat{\adjp}_G$,
    and hence there is a unique cell $C \in \subdiv_{k_1,k_2}$ that contains $X$.
    We shall now show that this cell has corank strictly greater than $\mu$.
    
    
    We fix an edge $\{ i, i' \} \in \edges(G_S)$ that belongs to a cycle in $G_S$.
    Without loss of generality, we assume that this cycle is a 
    fundamental cycle formed by $\{i, i'\}$ with respect to $T$,
    i.e., it is formed by $\{ i, i' \}$ together with a unique path
    $i = i_1 \leftrightarrow \cdots \leftrightarrow i_m = i'$ in $T$.
    Let $\lambda_1,\dots,\lambda_m \in \{ \pm 1 \}$ be the coefficients
    representing orientations as described above.
    Since $G_S$ is assumed to be balanced, this cycle must be balanced.
    That is, either this cycle is an odd cycle which contains $\{ k_1, k_2 \}$
    or it is an even cycle which does not contain $\{ k_1, k_2 \}$.
    Thus $J_{i,i'} = \{ j \in \{1,\dots,m\} \mid \{ i_j, i_{j+1} \} \ne \{ k_1, k_2 \}$
    contains an odd number of indices.
    Following from \eqref{eq:<extra edge, alpha>},
    \[
        \inner{ \bolde_{i} - \bolde_{i'} }{\boldalpha} = 
        - \sum_{j \in J_{i,i'} } \lambda_j = \pm 1.
    \]
    Then either 
    \begin{align*}
        \inner{ \bolde_{i} - \bolde_{i'} }{\boldalpha} + 1 &= 0
        &&\text{or}&
        \inner{ \bolde_{i'} - \bolde_{i} }{\boldalpha} + 1 &= 0.
    \end{align*}
    Consequently, exactly one point in $\{ \pm (\bolde_{i} - \bolde_{i'} ) \}$
    is in the cell $C$ but not in $X$.
    
    Since the cyclomatic number of $G_S$ is strictly greater than $\mu$,
    there are strictly more than $\mu$ choices of 
    $\{ i, i' \} \in \edges(G_S) \setminus \edges(T)$.
    Therefore $| C \setminus X | > \mu$.
    Recall that $X$ itself is full-dimensional,
    thus $\corank(C) > \mu$.
\end{proof}

Applying the above theorem to trees and even cycles, 
which have no balanced cycles,
and odd cycles, which have balanced circuit rank of 1,
the following results can be derived immediately.

\begin{proposition}
    Let $G$ be a tree graph and $e = \{ k_1, k_2 \}$ be an edge in $G$. 
    Then any cell $C$ in the edge contraction subdivision $\subdiv_{k_1,k_2}$ is simplicial. 
\end{proposition}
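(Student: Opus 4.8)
The plan is to derive this proposition directly from \Cref{thm: max corank}, which identifies the maximum corank of cells in $\subdiv_{k_1,k_2}$ with the balanced circuit rank of $G$. The key observation is that a tree has no cycles at all, hence no balanced cycles, so its balanced circuit rank is $0$. First I would note that since $G$ is a tree, every subgraph of $G$ is acyclic, and in particular has cyclomatic number $0$; thus the maximum cyclomatic number over all balanced subgraphs is $0$, making the balanced circuit rank of $G$ equal to $0$.

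Next I would apply \Cref{thm: max corank} to conclude that the maximum corank of any cell $C \in \subdiv_{k_1,k_2}$ is $0$. Since corank is by definition $|C| - \dim(C) - 1 \ge 0$ for any point configuration, a corank of $0$ means $|C| = \dim(C) + 1$, which is precisely the condition for $C$ to be simplicial. Every cell in the subdivision is full-dimensional with $\dim(C) = n$, so each cell consists of exactly $n+1$ points and is therefore a simplex. This gives the desired conclusion that every cell $C$ in $\subdiv_{k_1,k_2}$ is simplicial.

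Alternatively, one could invoke \Cref{cor: cell invariants} part (i), which states that a cell $C$ is simplicial if and only if $G_C$ is a spanning tree. Since $G_C$ is a subgraph of the tree $G$ that, by \Cref{trm: properties of cell subgraphs} part (ii), spans all of $\nodes(G)$, and since any connected spanning subgraph of a tree must be the tree itself (a tree has a unique spanning tree, namely itself), $G_C$ is forced to be a spanning tree. This again yields that $C$ is simplicial.

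I do not anticipate a genuine obstacle here, as the result is essentially a specialization of the heavy machinery already established. The only point requiring minor care is confirming that the balanced circuit rank is well-defined and equals zero for a tree: one must observe that the relevant maximum is taken over subgraphs, and a tree's subgraphs are all forests with no cycles, so no cycle—balanced or otherwise—can contribute to the cyclomatic number. The argument is then immediate, and the main work was already carried out in proving \Cref{thm: max corank}.
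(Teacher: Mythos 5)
Your main argument is exactly the paper's: the proposition is stated immediately after \Cref{thm: max corank} precisely because a tree has no cycles, hence balanced circuit rank $0$, hence every cell has corank $0$ and is simplicial. Your alternative route via \Cref{cor: cell invariants} also works but silently assumes $G_C$ is connected (a spanning subgraph of a tree could a priori be a disconnected forest); connectivity does follow, e.g.\ from the dimension formula in \Cref{thm:matroid-morphism} applied to the full-dimensional cell $C$, which forces the number of components to be $1$.
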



\begin{proposition}
    Let $G$ be a cycle graph with an even number of vertices,
    let and $e = \{ k_1, k_2 \}$ be an edge in $G$. 
    Then the edge contraction subdivision $\subdiv_{k_1,k_2}$ is a triangulation. 
\end{proposition}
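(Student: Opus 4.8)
The plan is to show that every cell of $\subdiv_{k_1,k_2}$ is a simplex, since a subdivision is a triangulation precisely when all of its cells are simplicial. By the definition of corank, a cell $C$ is simplicial exactly when $\corank(C) = 0$, so it suffices to prove that every cell has corank zero. Here I would invoke \Cref{thm: max corank}, which identifies the maximum corank of any cell in $\subdiv_{k_1,k_2}$ with the balanced circuit rank of $G$. Thus the entire statement reduces to the purely graph-theoretic claim that an even cycle has balanced circuit rank $0$.

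To establish this, I would analyze the balanced subgraphs of $G$. Since $G$ is a single cycle, the only subgraph of $G$ that contains any cycle is $G$ itself; every proper subgraph is a disjoint union of paths and hence a forest with cyclomatic number $0$. It therefore remains only to check whether the unique cycle $G$ is balanced with respect to the contraction along $e = \{k_1,k_2\}$. By \Cref{balanced cycle}, this amounts to a parity check: $G$ is balanced if and only if $\edges(G) \setminus \{e\}$ has an even number of edges. Because $G$ is an even cycle, $|\edges(G)|$ is even, and since $e$ is one of its edges, $|\edges(G) \setminus \{e\}|$ is odd. Hence $G$ is not balanced.

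Consequently no balanced subgraph of $G$ contains a cycle, so every balanced subgraph is a forest, and the balanced circuit rank of $G$ is $0$. Combining this with \Cref{thm: max corank}, every cell $C \in \subdiv_{k_1,k_2}$ satisfies $\corank(C) = 0$ and is therefore simplicial; equivalently, by \Cref{cor: cell invariants}, each $G_C$ is a spanning tree. This makes $\subdiv_{k_1,k_2}$ a triangulation.

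I would expect no serious obstacle here: the argument is essentially an application of \Cref{thm: max corank} followed by a one-line parity computation. The only point requiring care is the bookkeeping in the balance condition---in particular recognizing that $G$ itself is the sole cycle-carrying subgraph and that deleting the contracted edge $e$ flips the parity from even to odd, which is exactly what prevents any nontrivial balanced subgraph from existing.
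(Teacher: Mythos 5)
Your proposal is correct and follows exactly the paper's intended argument: the paper derives this proposition "immediately" from \Cref{thm: max corank} together with the observation that an even cycle has no balanced cycles, which is precisely your parity computation (the unique cycle of $G$ loses the contracted edge $e$ and is left with an odd number of edges, hence is unbalanced, so the balanced circuit rank is $0$). Your write-up simply makes explicit the steps the paper leaves to the reader.
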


\begin{proposition}
    Let $G$ be a cycle graph with an odd number of vertices,
    let and $e = \{ k_1, k_2 \}$ be an edge in $G$. 
    Then every cell $C$ in the edge contraction subdivision $\subdiv_{k_1,k_2}$ is a circuit. 
\end{proposition}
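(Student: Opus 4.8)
The plan is to reduce the statement, via \Cref{cor: cell invariants}(iii), to showing that for every cell $C \in \subdiv_{k_1,k_2}$ the undirected cell subgraph $G_C$ is a chordless cycle. Since $G_C \subseteq G$ is spanning (\Cref{trm: properties of cell subgraphs}\ref{trm part: C all nodes}) and $G$ is a single cycle, this is equivalent to proving $G_C = G$. First I would exploit the dichotomy forced by $G$ having a unique cycle: its only cycle subgraph is $G$ itself, so either $G_C = G$ (a chordless cycle, and we are done) or $G_C$ is a spanning forest. In the latter case \Cref{cor: cell invariants}(iv) gives $\corank(C) = 0$, so $C$ is simplicial, and then \Cref{cor: cell invariants}(i) upgrades the forest to a spanning tree. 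In particular this rules out a disconnected forest and reduces the whole theorem to excluding a single spanning-tree possibility.

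The heart of the argument is a parity obstruction ruling out a spanning-tree cell subgraph. Let $(\boldalpha,1)$ be the upward pointing inner normal of the lower facet $\hat C$, write $N := n+1$ for the (odd) number of vertices of $G$, and record the vertex values $\alpha_0 = 0, \alpha_1, \dots, \alpha_n$. For each edge $\{u,v\}$ of $G$ the increment $\alpha_u - \alpha_v$ equals $0$ on the contracted edge $\{k_1,k_2\}$ (by \Cref{Cor: h=0}), equals $\pm 1$ on every non-contracted edge of $G_C$, and lies strictly in $(-1,1)$ on any edge absent from $G_C$; indeed the facet inequalities force $\alpha_u - \alpha_v \in [-1,1]$, with the values $\pm 1$ attained exactly when $\bolde_u - \bolde_v$ or $\bolde_v - \bolde_u$ lies in $C$, i.e. exactly when the edge belongs to $G_C$. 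Fixing a cyclic orientation and traversing the $N$-cycle once, these increments telescope to $0$. A spanning tree omits exactly one edge $e^*$, necessarily non-contracted because $\{k_1,k_2\} \in \edges(G_C)$ by \Cref{lem:special-edge}; the remaining $N-2$ non-contracted edges each contribute $\pm 1$ and the contracted edge contributes $0$. Since $N$ is odd, $N-2$ is odd, so the sum of those $\pm 1$ contributions is an odd integer, whereas the increment on $e^*$ must be its negative, hence a nonzero integer — contradicting $|\alpha_u - \alpha_v| < 1$ for the absent edge $e^*$. Therefore no cell is simplicial.

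Combining the two parts, every cell satisfies $G_C = G$, which is chordless, so $C$ is a circuit by \Cref{cor: cell invariants}(iii). I expect the main obstacle to be the second paragraph: setting up the increment bookkeeping cleanly — choosing a cyclic orientation so the telescoping sum is manifestly $0$, and correctly separating the contracted edge's $0$ contribution from the $\pm 1$ of the ordinary tree edges — and then pinning down the parity count so that the forbidden increment on $e^*$ is forced to be an odd, hence nonzero, integer. The remaining steps are immediate consequences of \Cref{cor: cell invariants} together with the fact that $G$ possesses a unique cycle.
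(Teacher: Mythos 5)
Your proof is correct, but it takes a genuinely different route from the paper. The paper obtains this proposition as an immediate consequence of \Cref{thm: max corank}: the unique cycle of an odd cycle graph contains the contraction edge and is therefore balanced, so the balanced circuit rank is $1$, and hence the maximum corank of cells is $1$; combined with \Cref{cor: cell invariants}, a corank-$1$ cell subgraph inside a cycle graph must be the whole cycle, hence a chordless cycle, hence a circuit. You instead argue directly: you reduce via \Cref{cor: cell invariants} to showing $G_C = G$, and then rule out the only alternative (a spanning-tree cell subgraph) by a parity obstruction on the normal vector $\boldalpha$, using \Cref{Cor: h=0} for the values $0$ and $\pm 1$ on edges of $G_C$, the strict inequality $|\alpha_u - \alpha_v| < 1$ on the missing edge, and the telescoping sum around the odd cycle. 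What your approach buys is completeness at exactly the point where the paper is terse: \Cref{thm: max corank} asserts that the \emph{maximum} corank equals the balanced circuit rank, which rules out coranks $\ge 2$ and guarantees \emph{some} corank-$1$ cell, but does not by itself exclude simplicial (corank-$0$) cells, and that exclusion is precisely what ``every cell is a circuit'' requires. Your parity argument supplies this missing step explicitly (it is in the spirit of the alternating-sign computations inside the paper's proof of \Cref{thm: max corank}, but applied directly to a hypothetical simplicial cell), at the cost of being longer than the paper's one-line derivation. The bookkeeping you flagged as the main obstacle goes through without trouble: at most one orientation of each non-contracted edge can lie in $C$, the contracted edge contributes $0$ since $\alpha_{k_1} = \alpha_{k_2}$, and the $N-2$ remaining tree edges force the increment on the omitted edge $e^*$ to be an odd integer, contradicting $|\alpha_u - \alpha_v| < 1$.
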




We now procede to the volume computation problem.
As noted earlier, the normalized volume of $\adjp_G$ is an important property
that has found applications in the study of Kuramoto networks.
Since $\subdiv_{k_1,k_2}$ form a subdivision of the polytope $\adjp_G$,
the normalized volume of $\adjp_G$ can be computed as the sum of the
normalized volume of the cells in this subdivision.
While the problem of computing the normalized volume of an arbitrary cell
may be difficult, we shall show, in the following, that
the normalized volume of cells of small coranks can be computed directly.

\begin{theorem}\label{thm: cell volume}
    Let $\subdiv_{k_1,k_2}$ be an edge contraction subdivision (as in \Cref{def:edge-subdiv}),
    and let $C \in \subdiv_{k_1,k_2}$ be a cell.
    \begin{enumerate} [label=(\roman*)] 
        \item If $C$ is of corank-0, then \[ \nvol(C) = 2. \]
        
        \item
            If $C$ is of corank 1, then \[ \nvol(C) = m \]
            where $m$ is the circumference of $G_C$.
            
        \item
            If $C$ is of corank 2, and hence $G_C$ contains two chordless cycles 
            of size $m_1$ and $m_2$ respectively with at least one of them even, 
            then 
            \[
                \nvol(C) = \frac{m_1 m_2}{2} - 2 \gamma \delta,
            \]
            where $\gamma$ and $\delta$ are the numbers of directed edges shared by 
            the two chordless cycles in the two directions respectively.
    \end{enumerate}
\end{theorem}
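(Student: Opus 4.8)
For part (i), \Cref{cor: cell invariants} tells us a corank-$0$ cell is simplicial with $G_C$ a spanning tree, and \Cref{lem:special-edge} forces the special edge to be doubled, so $C$ consists of $\pm(\bolde_{k_1}-\bolde_{k_2})$ together with one point per remaining tree edge. I would reindex so that $k_1=0,\,k_2=1$ and compute $\nvol(C)=|\det M|$, where $M$ has the homogenized vertices as rows. Taking $\bolde_{k_2}-\bolde_{k_1}$ as the apex and reusing the topological-ordering reduction from the proof of \Cref{thm:matroid-morphism}\ref{trm part: tree - aff.indep}, the matrix of vertex differences becomes triangular with diagonal entries $\pm 1$ except for a single $-2$ contributed by the doubled edge $\pm(\bolde_{k_1}-\bolde_{k_2})$; hence $\nvol(C)=2$. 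This ``volume-$2$ spanning-tree simplex'' is the basic building block for the remaining parts.

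For part (ii), \Cref{cor: cell invariants} gives that $G_C$ is spanning and unicyclic with a unique chordless cycle $O$ of circumference $m$, the special edge again doubled. The plan is to isolate the circuit $X'\subseteq C$ supporting $O$ (its directed edges, together with $\pm(\bolde_{k_1}-\bolde_{k_2})$ when $O$ is odd and contains the special edge), whose Radon signature is $\left(\lceil m/2\rceil,\lceil m/2\rceil,\ast\right)$ by \Cref{cor: signature for corank 1}. Triangulating this single circuit refines $C$ into $\lceil m/2\rceil$ full-dimensional simplices, one per point on one side of the Radon partition. When $O$ is even it avoids the special edge (it is balanced, \Cref{trm: properties of cell subgraphs}\ref{trm part: balanced cycle}), so each simplex deletes one cycle edge and its cell subgraph is a spanning tree with the special edge still doubled, giving volume $2$ by part (i); the total is $\tfrac{m}{2}\cdot 2=m$. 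When $O$ is odd, deleting the side-point equal to one direction of the special edge leaves the full odd cycle carrying only one of $\pm(\bolde_{k_1}-\bolde_{k_2})$; here I would verify directly (via the same triangular reduction, the single cycle contributing a factor $|p-q|=1$) that this ``unbalanced-cycle simplex'' has volume $1$, while the other $\tfrac{m-1}{2}$ deletions break the path and yield volume-$2$ spanning-tree simplices, for a total of $1+2\cdot\tfrac{m-1}{2}=m$.

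For part (iii), \Cref{trm: properties of cell subgraphs}\ref{trm part: flip} lets me pick a cycle basis with at most one odd cycle, so I may take $O_1$ to be an \emph{even} chordless cycle of size $m_1$, which therefore avoids the special edge. The idea is to first resolve the circuit of $O_1$, subdividing $C$ into $\tfrac{m_1}{2}$ corank-$1$ pieces $C\setminus\{x\}$, one for each directed edge $x$ of $O_1$ on a fixed side of its Radon partition, and then apply the part-(ii) computation to each piece (the argument there depends only on the cell subgraph being spanning and unicyclic with the special edge doubled, which each piece inherits). If $x$ lies on no edge shared with $O_2$, the surviving cycle of $C\setminus\{x\}$ is $O_2$ itself and the piece contributes $m_2$; the correction $-2\gamma\delta$ must arise entirely from the deletions at edges shared by $O_1$ and $O_2$, where removing a shared directed edge destroys $O_2$ and replaces it by a symmetric-difference cycle $O_1\triangle O_2$ of a different circumference, whose volume part (ii) again evaluates as that circumference.

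The hard part will be the orientation bookkeeping for the shared edges. I expect $\gamma$ and $\delta$ to record, over the arcs common to $O_1$ and $O_2$, the shared edges traversed concordantly versus discordantly by the two cyclic orientations, so that $\gamma\delta$ is independent of the orientation choices and of which even cycle is selected. The plan is to partition the $m_1/2$ side-edges of $O_1$ into unshared edges (each contributing $m_2$) and shared edges, and for each shared deletion to compute the exact circumference of the surviving cycle $O_1\triangle O_2$ in terms of $m_1,m_2$ and the concordant/discordant arc lengths; summing these contributions and showing that the shared-edge terms collapse to exactly $-2\gamma\delta$, while confirming symmetry in the two cycles, is the main obstacle. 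I would cross-check the resulting formula against the corank-$2$ running example (the $9$-point cell in $\R^6$) to validate the bookkeeping.
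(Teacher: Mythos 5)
Your parts (i) and (ii) are correct. For (i) you replace the paper's route (via \Cref{cor: C simplex -> F simplex} and the cited fact that simplicial facets of adjacency polytopes have normalized volume $1$) with a direct determinant computation reusing the triangularization from the proof of \Cref{thm:matroid-morphism}~\ref{trm part: tree - aff.indep}; this is valid, and in fact slightly more robust, since it applies to \emph{any} spanning-tree simplex with the doubled special edge rather than only to genuine cells of $\subdiv_{k_1,k_2}$ --- which is exactly how part (i) gets invoked inside parts (ii) and (iii). Your part (ii) follows the paper's argument (signature of the unique circuit via \Cref{cor: signature for corank 1}, then the circuit triangulation), and your $|p-q|=1$ verification for the simplex missing one copy of $\bolde_{k_1}-\bolde_{k_2}$ supplies a detail the paper only asserts.

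Part (iii), however, has a genuine gap that cannot be repaired along the lines you propose. The family $\{\,C\setminus\{x\}\,\}$, where $x$ ranges over one Radon side of the circuit supported on the even cycle $O_1$, is \emph{not} a polyhedral subdivision of $\conv(C)$ once $O_1$ and $O_2$ share edges: the pieces overlap in their interiors, so their volumes do not add. The paper's own running example (\Cref{fig: running: cell subgraph}) refutes the plan. There $C$ has nine points in $\R^6$; the two cycles are the odd triangle $0$--$2$--$3$ (through the special edge $\{0,3\}$) and the even $4$-cycle $0$--$2$--$3$--$5$, sharing the edges $\{0,2\}$ and $\{2,3\}$, whose directed representatives $(0,2)$ and $(3,2)$ fall one in each orientation class of the $4$-cycle, so $\gamma=\delta=1$ and the theorem gives $\nvol(C)=\tfrac{3\cdot 4}{2}-2=4$ (direct computation, e.g.\ exploiting the join structure over coordinates $x_1,x_4,x_6$, confirms $4$). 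Your decomposition along the $4$-cycle deletes, say, the points for $(0,2)$ and $(3,5)$: the first piece is corank $1$ with unique cycle the triangle $0$--$3$--$5$ (the symmetric difference), the second with unique cycle the triangle $0$--$2$--$3$, so by your own part (ii) each has normalized volume $3$, totaling $6\neq 4$; the other Radon side also gives $6$. Since both pieces lie inside $\conv(C)$, the excess already proves their interiors overlap. In general your accounting produces $\tfrac{m_1m_2}{2}+g\,(m_1-2s)$, where $s=\gamma+\delta$ is the number of shared edges and $g$ is the number of them on the chosen side, and this cannot collapse to $\tfrac{m_1m_2}{2}-2\gamma\delta$ except in degenerate cases.

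This failure is precisely why the paper does not split along a circuit but instead builds a \emph{regular} subdivision of $C$, lifting a single point $\bolda\in C_2\setminus C_1$ (an unshared edge of the even cycle) to height $1$. Regularity certifies that one obtains a subdivision at all, and its cells turn out to be only the deletions $C\setminus\{\boldb\}$ with $\boldb\in C_2\setminus C_1$ positively oriented --- there are $\tfrac{m_2}{2}-\gamma$ of these, i.e.\ the shared-edge deletions you rely on are \emph{excluded} --- together with additional corank-$0$ cells $C\setminus\{\boldb_1,\boldb_2\}$ with $\boldb_1\in C_1\setminus C_2$ and $\boldb_2\in C_1\cap C_2$. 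It is exactly this exclusion plus the corank-$0$ cells that generate the correction $-2\gamma\delta$. Any repair of your argument must supply a certificate (regularity or otherwise) that the proposed pieces form a subdivision; for the single-circuit splitting no such certificate can exist, as the example shows.
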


\begin{proof}
\begin{enumerate}[label=(\roman*),wide, labelwidth=!, itemindent=0em]
    \item
        To simplify the notation, we can assume $k_1 = 0$ and $k_2 = n$.
        Suppose $C$ is simplicial, then by \Cref{cor: C simplex -> F simplex}, 
        $C$ is associated to a simplicial facet $F'$ of the polytope $\adjp_{G'}$,
        where $G' = G \sslash \{ 0, n \}$.
        That is, $F' = \conv \{ \bolda_1, \dots, \bolda_{n-1} \}$ for some
        $\bolda_1,\dots,\bolda_{n-1} \in \phi(\edges(G')) \subset \R^{n-1}$.
        Moreover, the projection of vertices in $C$ into $\R^{n-1}$
        are exactly the points $\boldzero,\bolda_1,\dots,\bolda_{n-1}$.
        Therefore,
        \[
            \nvol(C) =
            \left|
            \det
            \begin{bmatrix}
                \bolda^\top_1     & 1               \\
                \vdots            & \vdots          \\
                \bolda^\top_{n-1} & 1               \\
                                  & 2 \bolde_n^\top \\
            \end{bmatrix}
            \right|
            =
            2
            \left|
            \det
            \begin{bmatrix}
                \bolda_1^\top     \\
                \vdots            \\
                \bolda_{n-1}^\top \\
            \end{bmatrix}
            \right|
            ,
        \]
        which is precisely $2 \cdot \nvol_{k-1}(F')$.
        As shown in~\cite{Chen2019Directed,ChenDavisMehta2018Counting},
        a simplicial facet of an adjacency polytope has normalized volume 1,
        which produces the desired equality.
        
    \item
        Suppose $C$ is of corank 1, 
        by \Cref{thm:matroid-morphism} parts \ref{trm part: simple cycle - circuit} and \ref{trm part: corank=cycl.number}, 
        there is a unique circuit $X \subseteq C$ which contains either 
        both $\pm (\bolde_{k_1}-\bolde_{k_2})$ or none them 
        and which corresponds to a unique chordless cycle $G_X$ in $G_C$.
        Let $m$ be the size of this cycle which will also be the circumference of $G_C$.
        Let $C^{+}$ be the subset of $C$ contributing to the $\sigma_+$ in the signature of $C$,
        then as stated in \Cref{cor: signature for corank 1}, 
        $C^+ \subset X$ and $|C^+| = \lceil \frac{m}{2} \rceil$.
        We consider the triangulation \cite[Proposition 1.2]{gelfand_discriminants_1994} of $C$ given by
        \[
            D = \{ \, C \setminus \{\boldc\} \;\mid\; \boldc \in C^+ \,\},
        \]
        which contains $\ceil{ \frac{m}{2} }$ elements that will be referred to as subcells.
        
        If $\pm (\bolde_{k_1} - \bolde_{k_2}) \not \in X$ then $m$ is even
        and every $C' \in D$ must contain $\pm (\bolde_{k_1} - \bolde_{k_2})$.
        By the previous part, $\nvol(C') = 2$.
        Thus
        \[
            \nvol(C) = \sum_{C' \in D} \nvol(C') = \ceil*{ \frac{m}{2} } \cdot 2 = \frac{m}{2} \cdot 2 = m.
        \]
        
        On the other hand, if $\pm (\bolde_{k_1} - \bolde_{k_2}) \in X$,
        then $m$ is odd, and by \Cref{cor: signature for corank 1}, 
        only one of the points $\pm (\bolde_{k_1}-\bolde_{k_2})$ is in $C^+$. 
        Without loss of generality, we can assume $\bolde_{k_1}-\bolde_{k_2} \in C^+$
        In this case,
        \[
            D = 
            \{ \, C \setminus \{\boldc\} \;\mid\; \boldc \in C^+ \setminus \{ \bolde_{k_1} - \bolde_{k_2} \} \,\}
            \; \cup \;
            \{ \, C \setminus \{ \bolde_{k_1} - \bolde_{k_2} \}  \, \}.
        \]
        As in the above case, $\nvol( C \setminus \{ \boldc \} ) = 2$ 
        for $\boldc \ne \bolde_{k_1} - \bolde_{k_2}$ by the previous part.
        On the other hand, $\nvol( C \setminus \{ \bolde_{k_1} - \bolde_{k_2} \} ) = 1$.
        Therefore
        \[
            \nvol(C) = 
            \sum_{\boldc \in C^+ \setminus \{ \bolde_{k_1} - \bolde_{k_2} \} } \nvol( C \setminus \{ \boldc \} )  +
            \nvol( C \setminus \{ \bolde_{k_1} - \bolde_{k_2} \} ) =
            \frac{m-1}{2} \cdot 2 + 1 =
            m
        \]
        
    \item 
    Suppose $C$ is of corank 2, then by \Cref{thm:matroid-morphism} \ref{trm part: corank=cycl.number} 
    and \Cref{trm: properties of cell subgraphs} \ref{trm part: flip}, 
    $G_C$ contains two chordless cycles at least one of which is even. 
    Let $C_1$ and $C_2$ be the maximum subsets of $C$ representing the edges in these two cycles,
    and let $m_1 = |\edges(G_{C_1})|$ and $m_2 = |\edges(G_{C_2})|$ 
    with $\pm (\bolde_{k_1} - \bolde_{k_2}) \not\in C_2$ and $m_2$ being even.
    We will construct a regular subdivision for $C$ itself.
    Fixing any point $\bolda \in C_2 \setminus C_1$,
    We consider the lifting function $\omega_{C,\bolda} : C \to \Z$ given by
    \[
        \omega_{C,\bolda} (\boldc) =
        \begin{cases}
            1 &\text{if } \boldc = \bolda, \\
            0 &\text{otherwise},
        \end{cases}
    \]
    which induces a proper subdivision of the cell $C$ itself.
    Cells in this subdivision, which correspond to lower facets of
    $\conv \{ (\boldc, \omega_{C,\bolda} ) \mid \boldc \in C \}$,
    will be referred to as the subcells of $C$.
    Since $C$ is of corank 2, proper subcells of $C$ are either of corank 1 or 0.
    We shall enumerate all the subcells in this subdivision. 
    
    For convenience, a point $\boldb$ is referred to as being positively oriented
    if it represents an edge which has the same orientation as $\bolda$
    in some common (undirected) cycle of $G_C$ that contains both. 
    We first investigate all possible corank-1 subcells $C'$ of $C$,
    which must be of the form $C' = C \setminus \{ \boldb \}$ for some $\boldb \in C$. 
    Let $(\boldalpha,h) \in \R^{n+1}$ be the unique vector such that 
    $C'$ consists of all points $\boldc \in C$ for which 
    $\biginner{ \boldc }{ \boldalpha } + \omega_{C,\bolda}(\boldc) = h$.
    That is, $\biginner{ \bullet }{ (\boldalpha,1) } = h$ 
    defines the supporting hyperplane for the lower facet of 
    $\conv\{ (\boldc, \omega_{C,\bolda}(\boldc)) \mid \boldc \in C \}$
    whose projection is $C'$.
        
    We will first show it is necessary that $\boldb \in C_2 \setminus C_1$. 
    If $ \boldb \not \in C_1 \cup C_2$,
    then $G_{C'}$ contains two chordless cycles,
    and by \Cref{thm:matroid-morphism} \ref{trm part: corank=cycl.number}, 
    $C_1 \cup C_2$ and consequently $C'$ are of corank-2, 
    which contradicts to our assumption.
    If, on the other hand, $\boldb \in C_1$,
    then we can express $\boldb$ as an affine combination
    \[
        \boldb =
        \lambda_1 \boldc_1 + \cdots + \lambda_{m_1} \boldc_{|C_1|-1},
    \]
    where $\sum_i \lambda_i = 1$ and 
    $\{ \boldc_1, \dots, \boldc_{|C_1|-1} \} = C_1 \setminus \{ \boldb \} \subset C'$.
    Then
    \begin{align*}
        \biginner{ \boldb }{ \boldalpha } 
        &=       \lambda_1         \biginner{ \boldc_1         }{ \boldalpha } + 
        \cdots + \lambda_{|C_1|-1} \biginner{ \boldc_{|C_1|-1} }{ \boldalpha }
        = (\lambda_1 + \cdots + \lambda_{|C_1|-1})\, h 
        = h,
    \end{align*}
    which implies that $\boldb \in C'$ and contradicts with our assumption
    that $C' = C \setminus \{ \boldb \}$. 
    The only possibility left is $\boldb \in C_2 \setminus C_1$.
        
    We will now show that, under this assumption, 
    $C' = C \setminus \{ \boldb \}$ is a corank-1 subcell of $C$ 
    if and only if $\boldb$ is positively oriented. 
    This is equivalent to the statement that
    $(\boldb,0)$ is the only point in 
    $\conv \{ (\boldc,\omega_{C,\bolda}(\boldc) \mid \boldc \in C\}$ 
    that is strictly above the supporting hyperplane
    defined by $\biginner{ \bullet }{ (\boldalpha,1) } = h$
    if and only if $\boldb$ is positively oriented.
    First, we note that $G_{C'}$ is a connected spanning subgraph of $G$, 
    and $\digraph{G}_{C'}$ contains both directed edges $(k_1, k_2)$ and $(k_2, k_1)$. 
    Thus, we can conclude that
    $h = \inner{ \pm(\bolde_{k_1} - \bolde_{k_2}) }{ \boldalpha }$ must be zero.
    That is, for all $\boldc \in C'$,
    \[
        \biginner{ \boldc }{ \boldalpha } =
        \begin{cases}
            -1 &\text{if } \boldc = \bolda, \\
            0 &\text{otherwise}.
        \end{cases} 
    \]
    Since $C'$ contains $C_2 \setminus \{ \boldb \}$,
    and $G_{C_2}$ is an even cycle,
    there are $\lambda_{\boldb}, \lambda_1, \lambda_2, \dots, \lambda_{m_2-1} \in \{-1,1\}$
    which sum to exactly 0 such that
    \[
        -\lambda_{\boldb} \boldb = 
        \lambda_1 \boldc_1 + \lambda_2 \boldc_2 + \cdots + \lambda_{m_2-1} \boldc_{m_2-1},
    \]
    where $\boldc_1,\boldc_2,\dots,\boldc_{m_2-1}$ represent the rest of the edges
    on the cycle $G_{C_2}$.
    The choices of $\{ \lambda_i \}_{i=1}^{m_2-1}$ are unique up to a uniform change of signs.
    Without loss of generality, we can choose $\lambda_i = 1$ if $\boldc_i = \bolda$
    in the case $\boldb \ne \bolda$.
    Then
    \begin{align*}
        \biginner{ \boldb }{ \boldalpha } + \omega_{C,\bolda}(\boldb) 
        = - \lambda_{\boldb}
            \sum_{i=1}^{m_2-1} \lambda_i \inner{\boldc_i}{\boldalpha} 
            + \omega_{C,\bolda}(\boldb)
        =
        \begin{cases}
            \lambda_{\boldb} &\text{if } \boldb \neq\bolda, \\
            1                &\text{if $\boldb=\bolda$}.
        \end{cases}
    \end{align*}
    Consequently, the condition 
    $\inner{ (\boldb,\omega_{C,\bolda}(\boldb) }{ (\boldalpha,1) } > 0$
    is equivalent to the condition that
    either $\boldb = \bolda$ or $\lambda_{\boldb} > 0$.
    These two cases can be combined into the condition that
    $\boldb$ is positively oriented
    (including the case $\boldb = \bolda$).
    Therefore $C' = C \setminus \{ \boldb \}$ is a corank-1 subcell of $C$
    in the regular subdivision induced by $\omega_{C,\bolda}$
    if and only if $\boldb \in C_2 \setminus C_1$ is positively oriented.
    
    The number of choices for $\boldb$ that satisfy this description is
    $\frac{ m_2 }{2} - \gamma$,
    where $\gamma$ is the number of positively oriented points in $C_1 \cap C_2$.
    By the previous part,
    the normalized volume of each subcell is $m_1$.
    The contribution of corank-1 subcells to $\nvol(C)$ is therefore
    \begin{equation}
        m_1 \left(
            \frac{ m_2 }{2} - \gamma
        \right).
        \label{equ: corank-1 in corank-2}
    \end{equation}

    It is also possible to have corank-0 subcells in the regular subdivision
    induced by $\omega_{C,\bolda}$.
    Suppose there is a corank-0 subcell $T$.
    We can see that $\bolda$ must be in $T$
    since any subset of $C \setminus \{ \bolda \}$ must be contained in a
    corank-1 subcell described above.
    As in the previous case, we still let $(\boldalpha,h) \in \R^{n+1}$ 
    be the unique vector such that
    $T$ consists of all points $\boldc \in C$ such that
    $\biginner{ \boldc }{ \boldalpha } + \omega_{C,\bolda} = h$.
    In particular, $(\boldalpha,h)$ satisfies equations
    \begin{align*}
        \biginner{ \boldc }{ \boldalpha } =
        \begin{cases}
            h - 1 &\text{if } \boldc = \bolda, \\
            h     &\text{otherwise},
        \end{cases}
    \end{align*}
    for all $\boldc \in T$.
    By definition, $\dim T = n$ and $|T| = n+1 = |C| - 2$.
    In particular, there are distinct $\boldb_1,\boldb_2 \in C$ such that
    $T = C \setminus \{ \boldb_1, \boldb_2 \}$. 
    Moreover, $T$ must be affinely independent,
    so $T$ cannot contain dependent sets $C_1$ or $C_2$.
    Consequently, $C_1 \setminus T, C_2 \setminus T \ne \varnothing$.
    Without loss of generality, assume 
    $\boldb_1 \in C_1 \setminus T$ and
    $\boldb_2 \in C_2 \setminus T$.
    We will show that it is necessary that
    $\boldb_1 \in C_1 \setminus C_2$ and 
    $\boldb_2 \in C_1 \cap C_2$.
        
    First, suppose $\boldb_1,\boldb_2 \in C_1 \cap C_2$,
    then $T$ contains $\pm (\bolde_{k_1} - \bolde_{k_2})$,
    and $G_T$ still contains a cycle. 
    By \Cref{thm:matroid-morphism} \ref{trm part: tree - aff.indep}, $T$ must be affinely dependent, 
    which contradicts with our assumption.
    Now suppose $\boldb_1 \in C_1$ and $\boldb_2 \in C_2 \setminus C_1$,
    then 
    \[
        \boldb_1 = 
        \begin{cases}
            \lambda_1 \boldc_1 + \cdots \lambda_{m_1} \boldc_{m_1}
            &\text{if } \{ \pm (\bolde_{k_1} - \bolde_{k_2}) \} \subset C_1,
            \\
            \lambda_2 \boldc_2 + \cdots \lambda_{m_1} \boldc_{m_1}
            &\text{otherwise},
        \end{cases}
    \]
    where $\lambda_k$'s sum to 1,
    and $\boldc_k$'s represent the edges in $G_{C_1 \setminus \{\boldb_1\}}$.
    Then, in both cases,
    \[
        \biginner{ \boldb_1 }{ \boldalpha } = h.
    \]
    Thus $\boldb_1 \in T$ by definition, which also contradict with our assumptions.
    We can see that the only possibilities left are those with
    $\boldb_1 \in C_1 \setminus C_2$ and $\boldb_2 \in C_1 \cap C_2$.
    In particular, this implies that there would be no corank-0 subcells
    if $C_1 \cap C_2 = \varnothing$.
        
    We shall now show that under this assumption
    ($\boldb_1 \in C_1 \setminus C_2$ and $\boldb_2 \in C_1 \cap C_2$),
    $T = C \setminus \{ \boldb_1, \boldb_2 \}$
    is a subcell of corank 0 if and only both points are positively oriented.
    First, note that $G_T$ contains no cycles, 
    so by \Cref{thm:matroid-morphism} \ref{trm part: tree - aff.indep}, $T$ is affinely independent.
    Moreover, $|T| = |C| - 2 = n+1$, thus $T$ is full-dimensional.
    Since both edges represented by $\bolda$ and $\boldb_2$ 
    are contained in the chordless cycle $G_{C_2}$, 
    we can express $\boldb_2$ as an affine combination of the form
    \[
        \boldb_2 = \lambda \bolda + \lambda_2 \boldc_2 + \cdots \lambda_{m_2-1} \boldc_{m_2-1},
    \]
    where $\lambda + \lambda_2 + \cdots + \lambda_{m_2-1} = 1$ 
    and $\bolda,\boldc_2,\dots,\boldc_{m_2-1}$
    represent the rest of the edges of the cycle $G_{C_2}$.
    Then, as in the case above,
    \begin{align*}
        \biginner{ \boldb_2 }{ \boldalpha }
        &=
        \lambda   \inner{\bolda}{\boldalpha} + 
        \lambda_2 \inner{\boldc_2}{\boldalpha} + 
        \cdots + 
        \lambda_{m_2-1} \inner{\boldc_{m_2-1}}{\boldalpha} 
        \\
        &=
        \lambda (h - 1) + \lambda_2 \cdot h + \cdots + \lambda_{m_2-1} \cdot h
        \\
        &=
        (\lambda + \lambda_2 + \cdots + \lambda_{m_2-1}) \cdot h - \lambda
        \\
        &= h - \lambda,
    \end{align*}
    which is strictly greater than $h$ 
    if and only if $\lambda < 0$.
    This condition is equivalent to the condition that
    $\boldb_2$ is positively oriented.
    
    Since the cycles $G_{C_1}$ and $G_{C_2}$ share at least one edge
    represented by $\boldb_2$, 
    we can see that edges represented by $\boldb_1$ and $\bolda$
    are also contained in a common cycle.
    By the same argument, we can also conclude that
    $\inner{ \boldb_1 }{ \boldalpha } + \omega_{C,\bolda}(\boldb_1) > h$
    if and only $\boldb_1$ is positively oriented.
    
    That is, both $(\boldb_1,0)$ and $(\boldb_2,0)$ 
    will lie above the hyperplane  defined by 
    $\inner{ \bullet }{ (\boldalpha,1) } = h$,
    if and only if both $\boldb_1$ and $\boldb_2$ are positively oriented. Under this condition, $T = C \setminus \{ \boldb_1, \boldb_2 \}$ 
    is a corank-0 subcell of $C$ induced by the lifting $\omega_{C,\bolda}$.
        
    In terms of their contributions to the normalized volume of $C$,
    corank-0 subcells, if exist, come in two different types.
    
    The first type of corank-0 subcells are subcells 
    $T = C \setminus \{ \boldb_1, \boldb_2 \}$ with positively oriented
    $\boldb_1 \in C_1 \setminus C_2$ and $\boldb_2 \in C_1 \cap C_2$
    for which  $\pm (\bolde_{k_1} - \bolde_{k_2}) \not\subset T$.
    As shown above, $\nvol(T) = 1$.
    Since either $\pm (\bolde_{k_1} - \bolde_{k_2}) $ are not in $C_2$,
    it is necessary to choose  $\boldb_1 \in \{ \pm (\bolde_{k_1} - \bolde_{k_2}) \}$
    and the choice is unique if $\{ \pm (\bolde_{k_1} - \bolde_{k_2}) \} \subset C_1$.
    Under this assumption,
    the number of distinct choices of $\boldb_2$ is $\gamma$,
    the number of positively oriented points in $C_1 \cap C_2$.
    Therefore, the total number of corank-0 subcells of this type is
    \[
        \begin{cases}
            \gamma &\text{if } \{ \pm (\bolde_{k_1} - \bolde_{k_2}) \} \subset C_1,
            \\
            0 &\text{otherwise}.
        \end{cases}
    \]
    
    The second type of corank-0 subcells are of the form
    $T = C \setminus \{ \boldb_1, \boldb_2 \}$ with positively oriented
    $\boldb_1 \in C_1 \setminus C_2$ and $\boldb_2 \in C_1 \cap C_2$
    for which  $\pm (\bolde_{k_1} - \bolde_{k_2}) \subset T$.
    By the first part of this theorem, $\nvol(T) = 2$.
    The number of distinct choices for $\boldb_2$ is $\gamma$.
    Point $\boldb_1$ must be chosen from the set of positively oriented points
    in $C_1 \setminus C_2 \setminus \{ \pm (\bolde_{k_1} - \bolde_{k_2}) \}$.
    So the total number of such combinations is given by
    \[
        \begin{cases}
            \left( \ceil*{\frac{ m_1 + m_2 - 2(\gamma + \delta) }{2}} - \ceil*{\frac{ m_2 }{2}} + \gamma - 1 \right) \cdot \gamma
            &\text{if } \{ \pm (\bolde_{k_1} - \bolde_{k_2}) \} \subset C_1
            \\
            \left( \ceil*{\frac{ m_1 + m_2 - 2(\gamma + \delta) }{2}} - \ceil*{\frac{ m_2 }{2}} + \gamma \right) \cdot \gamma
            &\text{otherwise}.
        \end{cases}
    \]
    If $\{ \pm (\bolde_{k_1} - \bolde_{k_2}) \} \subset C_1$,
    then by \Cref{trm: properties of cell subgraphs} \ref{trm part: balanced cycle},
    $m_1$ is odd, and
    \begin{align*}
        \nvol(C) &=
        m_1 \left( \frac{m_2}{2} - \gamma \right) +
        1 \cdot \gamma +
        2 \cdot \left( \ceil*{\frac{ m_1 + m_2 - 2(\gamma + \delta) }{2}} - \ceil*{\frac{ m_2 }{2}} + \gamma - 1 \right) \cdot \gamma
        \\
        &= \frac{m_1 m_2}{2} - 2 \gamma \delta.
    \end{align*}
    Similarly, if $\{ \pm (\bolde_{k_1} - \bolde_{k_2}) \} \not\subset C_1$,
    $m_1$ is even, and
    \begin{align*}
        \nvol(C) &=
        m_1 \left( \frac{m_2}{2} - \gamma \right) +
        2 \cdot \left( \ceil*{\frac{ m_1 + m_2 - 2(\gamma + \delta) }{2}} - \ceil*{\frac{ m_2 }{2}} + \gamma \right) \cdot \gamma
        \\
        &= \frac{m_1 m_2}{2} - 2 \gamma \delta.
    \end{align*}
        
    \end{enumerate}
\end{proof}

\section{The matroid point of view}\label{sec:matroid}

\Cref{thm:matroid-morphism} highlights 
the connection between subsets of cells in $\subdiv_{k_1,k_2}$ and  subgraphs of $G$.
Through this connection, independent subsets correspond to forests,
dependent subsets correspond to cyclic graphs,
and circuit correspond to chordless cycles.
A precise characterization emerges from the point of view of matroid theory.

Recall that a matroid consists of a finite \emph{ground set},
together with a family of its subsets, known as \emph{independent sets}
that satisfies the conditions
\begin{enumerate}
    \item $\varnothing$ is an independent set;
    \item a subset of a independent set is independent;
    \item for two independent sets $A$ and $B$ with $|A| < |B|$,
        $A \cup \{ b \}$ is independent for some $b \in B$.
\end{enumerate}
Subsets of the ground set that are not independent
are called \emph{dependent sets}.
A maximally independent set is known as a \emph{basis}
while a minimally dependent set is known as a \emph{circuit}.
Two matroid structures appear naturally in this context,
and \cref{thm:matroid-morphism} shows that 
they are essentially the mirror images of one another.

Fixing a cell $C \in \subdiv_{k_1,k_2}$, consider the ground set,
\[
    P_C = 
    \{ \{ \boldc \} \mid \boldc \ne \pm (\bolde_{k_1} - \bolde_{k_2}) \}
    \cup
    \{ \{ \pm (\bolde_{k_1} - \bolde_{k_2} \} \}.
\]
Based on this ground set, we define the matroid with the family of
independent sets consiting of any set $X \subset P_C$ for which
$\bigcup_{\boldx \in X} \boldx$ is affinely independent,
including the empty set.

On the other hand, building on the ground set $E_C = \edges(G_C)$,
we can also construct a matroid where the independent elements are
subsets of edges, including the empty set, that does not contain a cycle.
In this setup, a basis is a spanning tree,
and a circuit is a chordless cycle.

\Cref{thm:matroid-morphism} shows the parallel nature of 
these two matroid structures in the context of this paper,
and it can be interpreted as follows.

\begin{proposition}
    Fixing a cell $C \in \subdiv_{k_1,k_2}(G)$,
    let $P_C$ and $E_C$ be the two matroid structures defined above.
    Then the function $f : 2^{P_C} \to 2^{E_C}$ given by $f(X) = G_{X}$
    has the properties that
    \begin{enumerate}[label=(\roman*)] 
        \item $f(X)$ is a basis if and only if $X$ is a basis;
        \item $f(X)$ is a circuit if and only if $X$ is a circuit;
        \item $f(X)$ is dependent if and only $X$ is dependent;
        \item $\rank(f(X)) = \rank(X)$;
    \end{enumerate}
\end{proposition}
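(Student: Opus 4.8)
The plan is to recognize that the ground set $P_C$ has been defined precisely so that the proposition becomes a transcription of \Cref{thm:matroid-morphism}. The standing hypothesis of that theorem is that the point set in question contains \emph{both} $\pm(\bolde_{k_1}-\bolde_{k_2})$ or \emph{neither}; and by bundling these two points into the single element $\{\pm(\bolde_{k_1}-\bolde_{k_2})\}$ of $P_C$, every $X \subseteq P_C$ automatically has this property once unpacked to $\bigcup_{\boldx \in X}\boldx$. Thus \Cref{thm:matroid-morphism} applies verbatim to the unpacked configuration of any $X$, and no new geometric input is needed.

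First I would verify that $f$ descends to a bijection between the ground sets $P_C$ and $E_C = \edges(G_C)$. Here \Cref{trm: properties of cell subgraphs} \ref{trm part: exactly one cycle} is essential: since $\digraph{G}_C$ has $k_1 \leftrightarrow k_2$ as its only directed cycle, no non-special pair $\{i,j\}$ can have both $\bolde_i-\bolde_j$ and $\bolde_j-\bolde_i$ in $C$. Consequently each singleton element of $P_C$ maps to a distinct undirected edge, the bundled element maps to $\{k_1,k_2\}$, and the correspondence is one-to-one and onto $E_C$. In particular this yields the cardinality identity $|X| = |f(X)|$ (counting elements of $P_C$ on the left and edges of $G_C$ on the right), which is what makes the rank statement dimensionally meaningful.

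With the bijection and the hypothesis compatibility established, properties (ii) and (iii) follow at once. By \Cref{thm:matroid-morphism} \ref{trm part: tree - aff.indep}, $X$ is dependent in $P_C$ (its unpacking is affinely dependent) exactly when $G_X$ contains a cycle, i.e.\ when $f(X)$ is dependent in the graphic matroid, giving (iii); and by \Cref{thm:matroid-morphism} \ref{trm part: simple cycle - circuit}, $X$ is a circuit exactly when $G_X$ is a chordless cycle, i.e.\ $f(X)$ is a circuit of $E_C$, giving (ii). For (iv) I would combine \Cref{thm:matroid-morphism} \ref{trm part: corank=cycl.number} with the cardinality identity: the matroid nullity $|X| - \rank(X)$ equals the point-configuration corank, which equals the cyclomatic number of $G_X$, which is in turn $|f(X)| - \rank(f(X))$ in the graphic matroid; cancelling $|X| = |f(X)|$ gives $\rank(X) = \rank(f(X))$. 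Property (i) then follows, either by specializing (iv) to a maximal independent set, or directly: a basis of $P_C$ is a maximal affinely independent set, which by \Cref{thm:matroid-morphism} \ref{trm part: tree - aff.indep} corresponds to a maximal forest of $G_C$, and since $G_C$ is connected and spans all of $\nodes(G)$ by \Cref{trm: properties of cell subgraphs} \ref{trm part: C all nodes}, such a maximal forest is precisely a spanning tree, i.e.\ a basis of $E_C$.

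The main obstacle is the bookkeeping forced by the bundled element $\{\pm(\bolde_{k_1}-\bolde_{k_2})\}$, which is one element of $P_C$ but two points of the configuration. The delicate point is that the matroid rank computed over $P_C$, the point-configuration corank used in \Cref{thm:matroid-morphism}, and the graphic rank over the fixed vertex set $\nodes(G)$ must all be shown to agree under this convention. The reconciliation is that the extra point contributed by the bundle is exactly compensated by the correction term $|\edges(G_X) \cap \{k_1,k_2\}|$ appearing in the dimension formula of \Cref{thm:matroid-morphism}, so that this term cancels and all three notions coincide. Once this cancellation is checked, the remainder is a routine translation through the bijection $f$.
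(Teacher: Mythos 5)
Your proposal is correct and takes essentially the same route as the paper: the paper states this proposition without a separate proof, presenting it as a direct reinterpretation of \Cref{thm:matroid-morphism}, and your argument supplies exactly that reinterpretation, including the two bookkeeping points the paper leaves implicit (that bundling $\pm(\bolde_{k_1}-\bolde_{k_2})$ into one ground-set element makes the both-or-neither hypothesis automatic, and that \Cref{trm: properties of cell subgraphs} \ref{trm part: exactly one cycle} makes the element-to-edge map a bijection, so $|X|=|f(X)|$ and the nullity/cyclomatic-number cancellation goes through). One small citation nitpick: connectivity of $G_C$, which you invoke for ``maximal forest $=$ spanning tree,'' does not follow from \Cref{trm: properties of cell subgraphs} \ref{trm part: C all nodes} (that gives only vertex-spanning); it follows instead by applying the dimension formula of \Cref{thm:matroid-morphism} to the full-dimensional cell $C$ itself (forcing the number of components to be $1$), or you can sidestep it entirely by matching bases with maximal forests rather than spanning trees.
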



\section{Examples}\label{sec:examples}

In this section, we show a few examples of edge contraction subdivisions
for adjacency polytopes.
For ease of visualization, we show directed cell subgraphs
whose edges are in one-to-one correspondence with points of cells.

\subsection{A 4-cycle}

\begin{wrapfigure}{r}{0.25\textwidth}
        \centering
        \begin{tikzpicture}[
            every node/.style={circle,thick,draw},
            every edge/.style={draw,thick}]
            \node (0) at ( 0.5  , .7) {0};
            \node (1) at (-0.3,  0) {1};
            \node (3) at ( 1.3,  0) {3};
            \node (2) at ( 0.5,-.7) {2};
            \path (0) edge (1);
            \path (0) edge[color=red,style={double}] (3);
            \path (1) edge (2);
            \path (2) edge (3);
        \end{tikzpicture}
        \caption{$C_4$ is a cycle of 4 nodes}
        \label{fig:C4}
    
\end{wrapfigure}
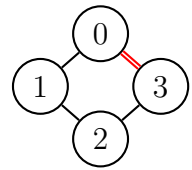

\Cref{fig:C4} shows a cycle of 4 nodes $C_4$.
Fixing $\{0,3\}$ to be the contraction edge,
the induced edge contraction subdivision $\subdiv_{0,3}(C_4)$
contains 6 cells which are in one-to-one correspondence
with the 6 facets of the adjacency polytope for $C_3 = C_4 \sslash \{ 0, 3 \}$
(cf.~\cite[Figure 2]{Chen2019Directed}). 
Since the only cycle in $C_4$ is of length 4, 
and it contains the contraction edge $\{0,3\}$,
the balanced circuit rank of $C_4$ 
with respect to this edge contraction is therefore 0.
\Cref{thm: max corank} states that all cells in $\subdiv_{0,3}(C_4)$ are simplicial.
By \Cref{thm: cell volume}, the volume of each simplicial cell is 2, thus
\[
    \nvol (\adjp_{C_4}) = 2 \cdot 6 = 12,
\]
which agrees with the general volume formula~\cite{ChenDavisMehta2018Counting}
for adjacency polytopes derived from cycles.


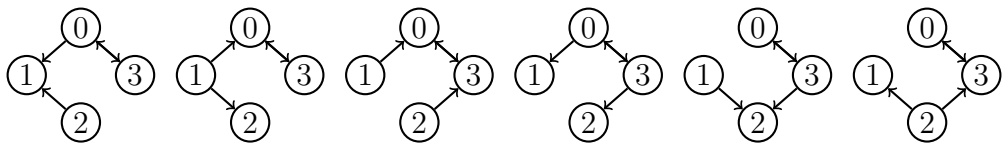
\begin{figure} [h!]
    \centering
    \begin{tikzpicture}[scale=0.9,
        every node/.style={circle,thick,draw,inner sep=1.7},
        every edge/.style={draw,thick}]
        \node (0) at ( 0.5  , .7) {0};
        \node (1) at (-0.3,  0) {1};
        \node (3) at ( 1.3,  0) {3};
        \node (2) at ( 0.5,-.7) {2};
        \path  [->] (3) edge (0);
        \path  [->] (0) edge (3);
        \path  [->] (0) edge (1);
        \path  [->] (2) edge (1);
        
        \begin{scope}[shift={(2.5,0)}]
            \node (0) at ( 0.5  , .7) {0};
            \node (1) at (-0.3,  0) {1};
            \node (3) at ( 1.3,  0) {3};
            \node (2) at ( 0.5,-.7) {2};
            \path  [->] (3) edge (0);
            \path  [->] (0) edge (3);
            \path  [->] (1) edge (0);
            \path  [->] (1) edge (2);
        \end{scope}
        
        \begin{scope}[shift={(5.0,0)}]
            \node (0) at ( 0.5  , .7) {0};
            \node (1) at (-0.3,  0) {1};
            \node (3) at ( 1.3,  0) {3};
            \node (2) at ( 0.5,-.7) {2};
            \path  [->] (3) edge (0);
            \path  [->] (0) edge (3);
            \path  [->] (2) edge (3);
            \path  [->] (1) edge (0);
        \end{scope}
        
        \begin{scope}[shift={(7.5,0)}]
            \node (0) at ( 0.5  , .7) {0};
            \node (1) at (-0.3,  0) {1};
            \node (3) at ( 1.3,  0) {3};
            \node (2) at ( 0.5,-.7) {2};
            \path  [->] (3) edge (0);
            \path  [->] (0) edge (3);
            \path  [->] (0) edge (1);
            \path  [->] (3) edge (2);
        \end{scope}
        
        \begin{scope}[shift={(10.0,0)}]
            \node (0) at ( 0.5  , .7) {0};
            \node (1) at (-0.3,  0) {1};
            \node (3) at ( 1.3,  0) {3};
            \node (2) at ( 0.5,-.7) {2};
            \path  [->] (3) edge (0);
            \path  [->] (0) edge (3);
            \path  [->] (1) edge (2);
            \path  [->] (3) edge (2);
        \end{scope}
        
        \begin{scope}[shift={(12.5,0)}]
            \node (0) at ( 0.5  , .7) {0};
            \node (1) at (-0.3,  0) {1};
            \node (3) at ( 1.3,  0) {3};
            \node (2) at ( 0.5,-.7) {2};
            \path  [->] (3) edge (0);
            \path  [->] (0) edge (3);
            \path  [->] (2) edge (1);
            \path  [->] (2) edge (3);
        \end{scope}
    \end{tikzpicture}
    
    \caption{
        Directed cell subgraphs associated with the 6 cells
        in the subdivision $\subdiv_{0,3}(C_4)$, all of which are simplicial.
    }
    \label{fig:C4 cells}
\end{figure}

\begin{wrapfigure}{r}{0.25\textwidth}
    \centering
        \begin{tikzpicture}[
            every node/.style={circle,thick,draw},
            every edge/.style={draw,thick}]
            \node (0) at ( 0.5  , .6) {0};
            \node (1) at (-0.3,  0) {1};
            \node (4) at ( 1.3  , 0) {4};
            \node (2) at ( 0, -0.9) {2};
            \node (3) at ( 1, -0.9) {3};
            \path (4) edge[color=red,style={double}] (0);
            \path  (1) edge (0);
            \path  (3) edge (4);
            \path  (2) edge (3);
            \path  (2) edge (1);
        \end{tikzpicture}
        \caption{$C_5$is a cycle of 5 nodes.}
        \label{fig: C5}
\end{wrapfigure}
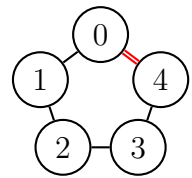
\subsection{A 5-cycle}

In the cycle $C_5$ consisting of 5 nodes shown in \Cref{fig: C5},
we consider the graph contraction with respect to the contraction edge $\{0,4\}$.
Since the only cycle in $C_5$ is an odd cycle, 
and it contains the contraction edge,
it is the unique balanced cycle.
Therefore, the balanced circuit rank of $C_5$ is 1,
and by \Cref{thm: max corank}, we expect the maximum corank of cells to be 1.
Indeed, as shown in \Cref{fig: C5 cells}, all 6 cells are of corank 1
(i.e., each cell subgraph contains a unique undirected cycle).
\Cref{thm: cell volume} states that each of these corank-1 cells
has normalized volume of 5 --- the circumference of the cycle.
Therefore,
\[
    \nvol(\adjp_{C_5}) = 5 \cdot 6 = 30,
\]
which also agrees with the volume formula~\cite{ChenDavisMehta2018Counting}
for adjacency polytopes of cycles.

\begin{figure} [h!]
    \centering
    \begin{tikzpicture}[scale=0.9,
        every node/.style={circle,thick,draw,inner sep=1.7},
        every edge/.style={draw,thick}]
        \node (0) at ( 0.5  , .6) {0};
        \node (1) at (-0.3,  0) {1};
        \node (4) at ( 1.3  , 0) {4};
        \node (2) at ( 0, -0.9) {2};
        \node (3) at ( 1, -0.9) {3};
        \path [->] (4) edge (0);
        \path [->] (0) edge (4);
        \path [->] (0) edge (1);
        \path [->] (1) edge (2);
        \path [->] (3) edge (2);
        \path [->] (4) edge (3);
        
        \begin{scope}[shift={(2.5,0)}]
            \node (0) at ( 0.5  , .6) {0};
            \node (1) at (-0.3,  0) {1};
            \node (4) at ( 1.3  , 0) {4};
            \node (2) at ( 0, -0.9) {2};
            \node (3) at ( 1, -0.9) {3};
            \path [->] (4) edge (0);
            \path [->] (0) edge (4);
            \path [->] (0) edge (1);
            \path [->] (2) edge (1);
            \path [->] (2) edge (3);
            \path [->] (4) edge (3);
        \end{scope}
        
        \begin{scope}[shift={(5.0,0)}]
            \node (0) at ( 0.5  , .6) {0};
            \node (1) at (-0.3,  0) {1};
            \node (4) at ( 1.3  , 0) {4};
            \node (2) at ( 0, -0.9) {2};
            \node (3) at ( 1, -0.9) {3};
            \path [->] (4) edge (0);
            \path [->] (0) edge (4);
            \path [->] (1) edge (0);
            \path [->] (2) edge (1);
            \path [->] (2) edge (3);
            \path [->] (3) edge (4);
        \end{scope}
        
        \begin{scope}[shift={(7.5,0)}]
            \node (0) at ( 0.5  , .6) {0};
            \node (1) at (-0.3,  0) {1};
            \node (4) at ( 1.3  , 0) {4};
            \node (2) at ( 0, -0.9) {2};
            \node (3) at ( 1, -0.9) {3};
            \path [->] (4) edge (0);
            \path [->] (0) edge (4);
            \path [->] (1) edge (0);
            \path [->] (1) edge (2);
            \path [->] (3) edge (2);
            \path [->] (3) edge (4);
        \end{scope}
        
        \begin{scope}[shift={(10.0,0)}]
            \node (0) at ( 0.5  , .6) {0};
            \node (1) at (-0.3,  0) {1};
            \node (4) at ( 1.3  , 0) {4};
            \node (2) at ( 0, -0.9) {2};
            \node (3) at ( 1, -0.9) {3};
            \path [->] (4) edge (0);
            \path [->] (0) edge (4);
            \path [->] (0) edge (1);
            \path [->] (2) edge (1);
            \path [->] (3) edge (2);
            \path [->] (3) edge (4);
        \end{scope}
        
        \begin{scope}[shift={(12.5,0)}]
            \node (0) at ( 0.5  , .6) {0};
            \node (1) at (-0.3,  0) {1};
            \node (4) at ( 1.3  , 0) {4};
            \node (2) at ( 0, -0.9) {2};
            \node (3) at ( 1, -0.9) {3};
            \path [->] (4) edge (0);
            \path [->] (0) edge (4);
            \path [->] (1) edge (0);
            \path [->] (1) edge (2);
            \path [->] (2) edge (3);
            \path [->] (4) edge (3);
        \end{scope}
        
    \end{tikzpicture}
    
    
    \caption{Directed cell subgraphs associated with cells in $\subdiv_{0,1}(C_5)$.}
    \label{fig: C5 cells}  
\end{figure}

\section{Conclusion}\label{sec:conclusion}

Adjacency polytopes, a.k.a. symmetric edge polytopes,
are convex polytopes associated with connected simple graphs
that have found important applications in several seemingly independent fields.
The set of facets and regular subdivisions of an adjacency polytope
are particularly important in certain applications 
(e.g. the study of algebraic Kuramoto equations).
Recent works established explicit descriptions of the facets and subdivisions
of many families of graphs including trees, cycles, wheels, and bipartite graphs
\cite{ChenDavis2018Toric,ChenDavisMehta2018Counting,dal2019faces}.
The general description for facets and subdivision for arbitrary connected graphs
remains an important open problem.

In this paper, we took one step toward a recursive approach for understanding
the geometric structure of adjacency polytopes associated with large and complex graphs
by considering the effect of an edge-contraction of a graph on the subdivisions
of the corresponding adjacency polytope.
In particular, we showed that an edge-contraction on a graph $G$ naturally induces 
a special regular subdivision of $\adjp_G$ whose cells are in one-to-one correspondence
with facets or product of facets of the adjacency polytope(s) associated with
the smaller resulting graph(s).
On a cell level, we studied 
corank, volume, and affine dependence of individual cells 
as well as the symmetry between combinatorial properties of a cell
and graph-theoretic properties of its corresponding cell subgraph.
This symmetry is captured by the correspondence between two matroids.
We also established the maximum complexity of the cells 
in such edge contraction subdivision,
measured in terms of their coranks, 
to be the balanced circuit rank of the original graph.
Combined with the existing understanding of the facet structures for 
adjacency polytopes associated with trees, cycles, wheels, bipartite graphs, etc.,
these results may shed new light on the regular subdivisions for
more complicated families of graphs.

\section*{Acknoledgements}

This project is an extension of a discussion the authors had with 
Robert Davis,
Alessio D'Alì, Emanuele Delucchi, and Mateusz Michałek.
The authors also thank Robert Davis for reviewing the early draft
and for many suggestions that greatly improved this manuscript.

\bibliographystyle{abbrv}
\bibliography{library}

\end{document}